\title{\textbf{The nucleus of the Grassmann graph $J_q(N,D)$}}
\author{Jae-Ho Lee, Jongyook Park, Ian Seong\footnote{All authors contributed equally to this paper.}}
\date{}
\def\mult{\text{\rm{mult}}}
\newtheorem{theorem}{Theorem}[section]
\newtheorem{corollary}[theorem]{Corollary}
\newtheorem{lemma}[theorem]{Lemma}
\newtheorem{prop}[theorem]{Proposition}
\theoremstyle{definition}
\newtheorem{definition}[theorem]{Definition}
\newtheorem{problem}[theorem]{Problem}
\newtheorem{remark}[theorem]{Remark}
\begin{document}
\maketitle
\begin{abstract}
    Let $\mathbb{F}_q$ denote a finite field with $q$ elements. 
Let $N$ and $D$ denote integers with $N>D \ge 1$. 
Let $\mathcal{V}$ denote an $N$-dimensional vector space over $\mathbb{F}_q$. 
The Grassmann graph $J_q(N,D)$ is the graph with vertex set $X$ that consists of the $D$-dimensional subspaces of $\mathcal{V}$. 
Two vertices are adjacent whenever their intersection has dimension $D-1$. 
Fix a vertex $x$ in $X$. 
The Terwilliger algebra $T=T(x)$ of $J_q(N,D)$ with respect to $x$ is the subalgebra of $\mathrm{Mat}_X(\mathbb{C})$ generated by the adjacency matrix $A$ and the dual adjacency matrix $A^* = A^*(x)$. 
It is known that an irreducible $T$-module $W$ has certain parameters called the endpoint $r$, the dual endpoint $t$, and the diameter $d$. 
The displacement of $W$ is defined to be the integer $r+t-D+d$. 
Let $\mathcal{N}=\mathcal{N}(x)$ denote the span of all irreducible $T$-modules with displacement 0. 
We call $\mathcal{N}$ the nucleus of $J_q(N,D)$ with respect to $x$.
In this paper, we study the structure of $\mathcal{N}$. 
Specifically, we present a formula for the dimension of $\mathcal{N}$, construct two explicit bases for $\mathcal{N}$, and describe the action of $A$ and $A^*$ on these bases. 
To obtain these results, we use the projective geometry $P_q(N)$, consisting of all subspaces of $\mathcal{V}$, as a key tool. \\ \\
     \textbf{Keywords.} $Q$-polynomial distance-regular graph; projective geometry; Grassmann graph; Terwilliger algebra.\\
    \textbf{2020 Mathematics Subject Classification.} Primary: 05E30. Secondary: 05C50, 06C05.
\end{abstract}

\section{Introduction}
This paper is about a family of distance-regular graphs \cite{BBIT, BCN} called the Grassmann graphs \cite{GK2025JCT, Metsch, Seong2024GC, Seong2025EJC}. These graphs are known to have the $Q$-polynomial property \cite{Cerzo, TKCP2022GC, Ter2024arxiv}. In \cite{Ter1992JOACI, Ter1993JOACIII}, Terwilliger introduces the Terwilliger algebra $T=T(x)$ of a distance-regular graph with respect to a given vertex $x$, and described the irreducible $T$-modules in the standard module. In \cite{Ter2005GC}, he defined the notion of the displacement of an irreducible $T$-module. In \cite{Ter2024arxiv}, Terwilliger introduces a certain $T$-module of a $Q$-polynomial distance-regular graph, called the nucleus. In particular, he describes the nucleus of a dual polar graph. In \cite{NT2024arxiv}, Nomura and Terwilliger describe the nucleus of a Johnson graph. 

The goal of this paper is to describe the nucleus of a Grassmann graph. To set the stage, we first recall the notion of displacement. Let $\Gamma$ denote a $Q$-polynomial distance-regular graph with vertex set $X$, path-length distance function $\partial$, and diameter $D$. Fix a vertex $x \in X$. The Terwilliger algebra $T=T(x)$ of $\Gamma$ with respect to $x$ is the subalgebra of $\operatorname{Mat}_{X}(\mathbb{C})$ generated by the adjacency matrix $A$ and the dual adjacency matrix $A^*=A^*(x)$ \cite{Ter1992JOACI}. An irreducible $T$-module $W$ in a standard $\operatorname{Mat}_{X}(\mathbb{C})$-module has certain parameters called the endpoint, dual endpoint, and diameter, denoted by $r, t, d$ respectively; see \cite[Definition~3.5,~Lemmas~3.9,~3.12]{Ter1992JOACI}. The displacement of $W$ is defined to be the integer $r+t-D+d$; this integer is known to be nonnegative \cite[Proposition~6.5]{Ter2024arxiv}. We now define the nucleus of $\Gamma$. The nucleus $\mathcal{N}=\mathcal{N}(x)$ with respect to $x$ is the span of all irreducible $T$-modules that have displacement $0$. 

The Grassmann graph is defined from a projective geometry, which we now describe. 
Let $\mathbb{F}_q$ denote a finite field with $q$ elements. 
Let $N\geq 1$, and let $\mathcal{V}$ denote a vector space of dimension $N$ over $\mathbb{F}_q$. 
Let $P = P_q(N)$ denote the set of all subspaces of $\mathcal{V}$. 
The set $P$, together with inclusion partial order, forms a poset called a projective geometry. 
Now assume $N > D \geq 1$.
The Grassmann graph $J_q(N,D)$ has the vertex set $X$, consisting of the $D$-dimensional subspaces of $\mathcal{V}$. 
Two vertices $y, z \in X$ are adjacent whenever the subspace $y \cap z$ has dimension $D-1$.

To describe the nucleus of $J_q(N,D)$, we make heavy use of the results in \cite{LIW2020LAA}, which we now summarize. 
For the rest of this section, assume that $N>2D$. 
Recall the projective geometry $P$. 
Pick $x\in P$ such that $\dim x=D$. 
Note that $x$ is a vertex of $J_q(N,D)$.
For $0 \leq i\leq D$ and $0 \leq j\leq N-D$ define 
    \begin{equation*}
        P_{i,j}= \bigl\{u\in P\mid \dim (u\cap x)=i, \dim u=i+j\bigr\}.
    \end{equation*}
    
Let $u,v\in P$ such that $u\subseteq v$ and $\dim v=\dim u+1$. Write
    \begin{equation*}
        u\in P_{i,j}, \qquad \qquad v\in P_{\ell,n}.
    \end{equation*}
    By \cite[Lemma~2.3]{Seong2024arxiv}, either (i) $\ell=i+1$ and $n=j$, or (ii) $\ell=i$ and $n=j+1$. We say that $v$ $\slash$-covers $u$ whenever (i) holds, and $v$ $\backslash$-covers $u$ whenever (ii) holds. 

    For $0 \leq i\leq D$ and $0 \leq j\leq N-D$, define the diagonal matrix $E^{*}_{i,j}\in \text{Mat}_P(\mathbb{C})$ as follows. For $u\in P$, the $(u,u)$-entry of $E^{*}_{i,j}$ is
    \begin{equation*}
        \bigl(E^{*}_{i,j}\bigr)_{u,u}=\begin{cases}
            1&\text{if }u\in P_{i,j},\\
            0&\text{if $u\notin P_{i,j}$}.
        \end{cases}
    \end{equation*}  

    Define diagonal matrices $K_1,K_2\in \text{Mat}_P(\mathbb{C})$ as follows. For $u\in P$ their $(u,u)$-entries are
    \begin{equation*}
        (K_1)_{u,u}=q^{\frac{D}{2}-i},\qquad \qquad (K_2)_{u,u}=q^{\frac{N-D}{2}-j},
    \end{equation*}
    where $u\in P_{i,j}$. The matrices $K_1, K_2$ are invertible. Define matrices $L_1,L_2,R_1,R_2\in \text{Mat}_P(\mathbb{C})$ as follows. For $u,v\in P$ their $(u,v)$-entries are
    \begin{align*}
        \bigl(L_1\bigr)_{u,v}&=\begin{cases}
            1&\text{if }v\text{ $\slash$-covers }u,\\
            0&\text{if }v\text{ does not $\slash$-cover }u,
        \end{cases}\\
        \bigl(L_2\bigr)_{u,v}&=\begin{cases}
            1&\text{if }v\text{ $\backslash$-covers }u,\\
            0&\text{if }v\text{ does not $\backslash$-cover }u,
        \end{cases}\\
        \bigl(R_1\bigr)_{u,v}&=\begin{cases}
            1&\text{if }u\text{ $\slash$-covers }v,\\
            0&\text{if }u\text{ does not $\slash$-cover }v,
        \end{cases}\\
        \bigl(R_2\bigr)_{u,v}&=\begin{cases}
            1&\text{if }u\text{ $\backslash$-covers }v,\\
            0&\text{if }u\text{ does not $\backslash$-cover }v.
        \end{cases}
    \end{align*}
    Let $\mathcal{H}=\mathcal{H}(x)$ denote the subalgebra of $\text{Mat}_{P}(\mathbb{C})$ generated by $L_1,L_2,R_1,R_2, K_1^{\pm1}, K_2^{\pm 1}$. Let $\Psi$ denote the standard $\text{Mat}_{P}(\mathbb{C})$-module. Note that $\Psi$ is an $\mathcal{H}$-module. We also note that the vector space $\Psi$ is a direct sum of irreducible $\mathcal{H}$-modules. Let $\Omega$ denote an irreducible $\mathcal{H}$-module. The isomorphism class of $\Omega$ is determined by the parameters $\alpha,\beta,\rho$ (see \cite[p.~133]{LIW2020LAA}); we call the triple $(\alpha,\beta,\rho)$ the type of $\Omega$. Consider the subspace 
    \begin{equation*}
    \sum_{j=0}^{D}E^*_{D-j,j}\Omega
    \end{equation*}
    restricted to the standard $\text{Mat}_{X}(\mathbb{C})$-module $V$. By \cite[Proposition~5.5]{LIW2020LAA}, this subspace is an irreducible $T$-module of $J_q(N,D)$ with respect to $x$.
    
We now summarize the main results of this paper. We continue to fix $x\in P$ such that $\dim x=D$. We say that an irreducible $\mathcal{H}$-module $\Omega$ is alpha-dominant whenever $\Omega$ has type $(\alpha,0,0)$ for $0\leq \alpha\leq \frac{D}{2}$. Recall the nucleus $\mathcal{N}$ of $J_q(N,D)$. We show that there is a one-to-one correspondence between the following two sets:
\begin{enumerate}[label=(\roman*)]
    \item irreducible $T$-submodules of $\mathcal{N}$;
    \item irreducible $\mathcal{H}$-modules that are alpha-dominant.
\end{enumerate}
Using this fact, we show that the dimension of $\mathcal{N}$ is equal to
\begin{equation*}
    \sum_{i=0}^{D}\binom{D}{i}_q,
\end{equation*}
where $\binom{}{}_q$ is the $q$-binomial coefficient. For $y \in X$, let $\widehat{y}$ denote the vector in $V$ with $1$ in the $y$-coordinate and $0$ elsewhere.
For $Y \subseteq X$, define 
\begin{equation*}
    \widehat{Y} = \sum_{y \in Y} \widehat{y}.
\end{equation*}
This is called the characteristic vector of $Y$. Fix $0\leq i\leq D$. Pick $\alpha\subseteq x$ such that $\dim \alpha=D-i$. Define the vectors
\begin{equation*}
    \alpha^{\vee}=\sum_{\alpha\subseteq y}\widehat{y}, \qquad \qquad \alpha^{\mathcal{N}}=\sum_{x\cap y=\alpha}\widehat{y}.
\end{equation*}

Define
\begin{equation*}
    \Gamma_i(x)=\{y\in X\mid \partial(x,y)=i\}.
\end{equation*}

Define a subgraph $\gamma_i(x)$ of $\Gamma_i(x)$ as follows: remove from $\Gamma_i(x)$ the edges $yz$ for which $y\cap z$ is $\slash$-covered by each of $y,z$. We show that $\gamma_i(x)$ consists of $\binom{D}{i}_q$ connected components; each component corresponds to a subspace $\alpha\subseteq x$ of dimension $D-i$, in the sense that the characteristic vector of the component is $\alpha^{\mathcal{N}}$. We then present the action of the adjacency matrix $A$ and the dual adjacency matrix $A^*$ on the vectors $\alpha^{\vee}, \alpha^{\mathcal{N}}$.
We show that each of the following two sets forms a basis for $\mathcal{N}$:
\begin{enumerate}[label=(\roman*)]
    \item $\{\alpha^{\vee}\mid \alpha\subseteq x\}$;
        \item $\bigl\{\alpha^{\mathcal{N}}\mid \alpha\subseteq x\bigr\}$.
\end{enumerate}
We finish the paper with an appedix that contains identities involving $q$-binomial coefficients.

This paper is organized as follows. In Section 2 we present some preliminaries on a $Q$-polynomial distance-regular graph $\Gamma$ and its Terwilliger algebra $T$. In Section 3 we discuss the nucleus of $\Gamma$. In Section 4 we discuss the projective geometry $P$ and the Grassmann graph $J_q(N,D)$. In Section 5 we discuss the algebra $\mathcal{H}$ and the irreducible $\mathcal{H}$-modules. In Section 6 we discuss the irreducible $T$-modules of $J_q(N,D)$ and its nucleus. In Section 7 we discuss the graph $\gamma_i(x)$ and the vectors $\alpha^{\vee}, \alpha^{\mathcal{N}}$. In Section 8 we discuss the action of $A,A^*$ on the vectors $\alpha^{\vee}, \alpha^{\mathcal{N}}$. In Section 9 we find the dimension of $\mathcal{N}$ and display two of its bases. In Section 10 we briefly discuss the case $N=2D$. Section 11 is the appendix on the identities that involve $q$-binomial coefficients.

\section{Preliminaries}
We now begin our formal argument. Let $\Gamma=(X,R)$ denote a connected graph with no loops or multiple edges, vertex set $X$, edge set $R$, and path-length distance function $\partial$. By the \emph{diameter} $D$ of $\Gamma$, we mean the value $\max\{\partial(x,y)\mid x,y\in X\}$. To avoid triviality, we assume that $D\geq 1$ for the rest of the paper. 

For $0\leq i\leq D$ and a vertex $x\in X$, let 
\begin{equation*}
    \Gamma_{i}(x)=\{y\in X\mid \partial(x,y)=i\}.
\end{equation*}
We call $\Gamma_i(x)$ the \emph{$i$-th subconstituent of $\Gamma$ with respect to $x$}. We say that $\Gamma$ is \emph{regular with valency $k$} whenever $\vert \Gamma_1(x)\vert=k$ for all $x\in X$. We say that $\Gamma$ is \emph{distance-regular} whenever for all $0\leq h,i,j\leq D$ and all $x,y\in X$ such that $\partial(x,y)=h$, the cardinality of the set $$\{z\in X\mid\partial(x,z)=i,\; \partial(y,z)=j\}$$ depends only on $h,i,j$. This cardinality is denoted by $p_{i,j}^{h}$. From now on, we assume that $\Gamma$ is distance-regular. Observe that $\Gamma$ is regular with valency $k=p^{0}_{1,1}$.

Define 
\begin{equation*}
        b_i=p^{i}_{1,i+1}\; \;  (0\leq i<D),\qquad \qquad a_i=p^{i}_{1,i} \; \; (0\leq i\leq D),\qquad \qquad c_i=p^{i}_{1,i-1}\; \; (0<i\leq D).
\end{equation*}
Note that $b_0=k$, $a_0=0$, $c_1=1$. Also note that  
\begin{equation*}
    b_i+a_i+c_i=k \qquad \qquad (0\leq i\leq D),
\end{equation*}
where $c_0=0$ and $b_D=0$. We call $b_i$, $a_i$, $c_i$ the \emph{intersection numbers of $\Gamma$}.

Let $\text{Mat}_{X}(\mathbb{C})$ denote the $\mathbb{C}$-algebra consisting of the matrices with rows and columns indexed by $X$ and all entries in $\mathbb{C}$. 
For $y,z \in X$, the $(y,z)$-entry of $A_i$ is
\begin{equation*}
    (A_i)_{y,z}=\begin{cases}
        1&\text{if }\partial(y,z)=i,\\
        0&\text{if }\partial(y,z)\neq i.
    \end{cases}
\end{equation*}
We call $A=A_1$ the \emph{adjacency matrix of $\Gamma$}. By \cite[Section~3]{NT2024arxiv}, the following (i)--(v) hold:
\begin{enumerate}[label=(\roman*)]
    \item $A_0=I$, where $I$ is the identity matrix in $\text{Mat}_{X}(\mathbb{C})$;
    \item $J=\sum_{i=0}^{D} A_i$, where $J$ is the all-$1$ matrix;
    \item $A_i^{\top}=A_i$ ($0\leq i\leq D$), where $\top$ denotes matrix transpose;
    \item $\overline{A_i}=A_i$($0\leq i\leq D$), where $-$ denotes complex conjugate;
    \item $A_iA_j=A_jA_i=\sum_{h=0}^{D}p^{h}_{i,j}A_h$ ($0\leq i,j\leq D$).
\end{enumerate}
Hence, the matrices $\{A_i\}_{i=0}^{D}$ form a basis for a commutative subalgebra $M$ of $\operatorname{Mat}_{X}(\mathbb{C})$. We call $M$ the \emph{Bose-Mesner algebra of $\Gamma$}. Note that $M$ is generated by $A$. 

By the \emph{eigenvalues of $\Gamma$} we mean the eigenvalues of $A$. 
Note that all these eigenvalues are real.
Since $\Gamma$ is distance-regular, by \cite[p.~128]{BCN}, $\Gamma$ has $D+1$ distinct eigenvalues; we denote these eigenvalues by
\begin{equation*}
    \theta_0>\theta_1>\cdots>\theta_D.
\end{equation*}
Moreover, $\theta_0=k$ \cite[p.~129]{BCN}.

For $0\leq i\leq D$, define $E_i\in \text{Mat}_{X}(\mathbb{C})$ as 
\begin{equation}
\label{ea}
    E_i=\prod_{\substack{0\leq j\leq D\\j\neq i}}\frac{A-\theta_jI}{\theta_i-\theta_j}.
\end{equation}
For notational convenience, define $E_{-1}=E_{D+1}=0$. By \cite[Section~3]{NT2024arxiv}, we have $AE_i=E_iA=\theta_iE_i$ ($0\leq i\leq D$). We call $E_i$ the \emph{primitive idempotent} of $A$ associated with the eigenvalue $\theta_i$. By \cite[Section~3]{NT2024arxiv}, the following (i)--(v) hold:
\begin{enumerate}[label=(\roman*)]
    \item $E_0=\vert X\vert^{-1} J$;
    \item $I=\sum_{i=0}^{D}E_i$;
    \item $E_i^{\top}=E_i$ ($0\leq i\leq D$);
    \item $\overline{E_i}=E_i$ ($0\leq i\leq D$);
    \item $E_iE_j=\delta_{i,j} E_i$ ($0\leq i,j\leq D$).
\end{enumerate}
By \cite[Section~3]{NT2024arxiv}, the primitive idempotents $\{E_i\}_{i=0}^{D}$ form a basis for $M$. 

Let $V$ denote the $\mathbb{C}$-vector space consisting of the column vectors with rows indexed by $X$ and all entries in $\mathbb{C}$. The algebra $\text{Mat}_{X}(\mathbb{C})$ acts on $V$ by left multiplication. The $\text{Mat}_{X}(\mathbb{C})$-module $V$ is called \emph{standard}. By construction,
\begin{equation*}
    V=\sum_{i=0}^{D}E_iV,
\end{equation*}
where the sum is direct. For $0\leq i\leq D$ the vector space $E_iV$ is the eigenspace of $A$ associated with the eigenvalue $\theta_i$. By the {\it spectrum of $\Gamma$} we mean the set of ordered pairs $\bigl\{\left(\theta_{i},m_i\right)\bigr\}_{i=0}^{D}$, where $\left\{\theta_i\right\}_{i=0}^D$ are the
eigenvalues of $\Gamma$ and $m_i$ the dimension of the $\theta_i$-eigenspace ($0\leq i\leq D$). By \cite[Proposition~3.1]{Biggs} and the assumption that $\Gamma$ is connected, $m_0=1$.

By \cite[Section~3]{NT2024arxiv}, there exist scalars $q^{h}_{i,j}\in \mathbb{C}$ ($0\leq h,i,j\leq D$) such that
\begin{equation*}
    E_i\circ E_j=\vert X\vert ^{-1}\sum_{h=0}^{D}q^{h}_{i,j}E_h\qquad \qquad (0\leq i,j\leq D),
\end{equation*}
where $\circ$ denotes the entry-wise matrix multiplication. The scalars $q^{h}_{i,j}$ are called the \emph{Krein parameters of $\Gamma$}. 

The graph $\Gamma$ is said to be \emph{$Q$-polynomial} whenever 
\begin{equation*}
    q^{h}_{i,j}=\begin{cases}
        0&\text{if one of $h,i,j$ is greater than the sum of the other two,}\\
        \neq 0& \text{if one of $h,i,j$ is equal to the sum of the other two.}
    \end{cases}
\end{equation*}
From now on, we assume that $\Gamma$ is $Q$-polynomial.

Pick $x\in X$. For $0\leq i\leq D$, define the diagonal matrix $E^*_{i}=E^*_{i}(x)\in \text{Mat}_{X}(\mathbb{C})$ as follows. For $y\in X$, the $(y,y)$-entry of $E^*_{i}$ is
\begin{equation*}
    (E^*_{i})_{y,y}=\begin{cases}
        1&\text{if }\partial(x,y)=i,\\
        0&\text{if }\partial(x,y)\neq i.
    \end{cases}
\end{equation*}

For notational convenience, define $E^{*}_{-1}=E^*_{D+1}=0$. By \cite[Section~3]{NT2024arxiv}, we have $I=\sum_{i=0}^{D}E_i^{*}$ and $E_i^*E_j^*=\delta_{i,j}E_i^*$ ($0\leq i,j\leq D$). Hence, the matrices $\{E^*_{i}\}_{i=0}^{D}$ form a basis for a commutative subalgebra $M^{*}=M^{*}(x)$ of $\text{Mat}_{X}(\mathbb{C})$. We call $M^*$ the \emph{dual Bose-Mesner algebra} of $\Gamma$ with respect to $x$. 

Define the diagonal matrix $A^*=A^*(x)\in \text{Mat}_{X}(\mathbb{C})$ as follows. For $y\in X$, the $(y,y)$-entry of $A^*$ is
\begin{equation*}
    (A^*)_{y,y}=\vert X \vert (E_1)_{x,y}.
\end{equation*}

Since $\Gamma$ is $Q$-polynomial, by \cite[Lemma~3.11]{Ter1992JOACI} the matrix $A^*$ generates $M^*$.
Since $\{E_i^*\}_{i=0}^D$ is a basis for $M^*$, there exist scalars $\{ \theta_i^* \}_{i=0}^D$ such that
\begin{equation*}
	A^* = \sum_{i=0}^D \theta_i^* E_i^*.
\end{equation*}
By \cite[Lemma~3.11]{Ter1992JOACI}, the scalars $\{\theta_i^*\}_{i=0}^D$ are real and mutually distinct.
For $0 \leq i \leq D$, we have $A^*E_i^*=E_i^*A^*=\theta_i^*E_i^*$, and $E_i^*V$ is the eigenspace of $A^*$ corresponding to the eigenvalue $\theta_i^*$.
For $0\leq i\leq D$, we call $\theta_i^{*}$ the \emph{dual eigenvalue of $\Gamma$ associated with $E^{*}_i$}.

Let $T=T(x)$ denote the subalgebra of $\text{Mat}_{X}(\mathbb{C})$ generated by $M$ and $M^*$. The algebra $T$ is called the \emph{Terwilliger algebra of $\Gamma$ with respect to $x$}. Recall the standard module $V$. A \emph{$T$-module} is a subspace $W$ of $V$ such that $TW\subseteq W$. We say that a $T$-module $W$ is \emph{irreducible} whenever $W$ does not contain a $T$-submodule other than $0$ and itself. Since $T$ is closed under the conjugate-transpose map, any $T$-module decomposes into a direct sum of irreducible $T$-modules. In particular, the standard module $V$ is a direct sum of irreducible $T$-modules.

Let $W$ denote an irreducible $T$-module. By the \emph{endpoint $r$ of $W$}, we mean 
\begin{equation*}
    r=\min\{i\mid 0\leq i\leq D,\; E^*_{i}W\neq 0\}.
\end{equation*}

By the \emph{dual endpoint $t$ of $W$}, we mean 
\begin{equation*}
    t=\min\{i\mid 0\leq i\leq D,\; E_{i}W\neq 0\}.
\end{equation*}

By the \emph{diameter $d$ of $W$}, we mean 
\begin{equation*}
    d=\bigl\vert \{i\mid 0\leq i\leq D,\; E^*_{i}W\neq 0\}\bigr\vert-1.
\end{equation*}

By the \emph{dual diameter $\delta$ of $W$}, we mean 
\begin{equation*}
    \delta=\bigl\vert \{i\mid 0\leq i\leq D,\; E_{i}W\neq 0\}\bigr\vert-1.
\end{equation*}

By \cite[Lemma~5.5]{Ter1993JOACIII}, we have $d=\delta$.

\begin{definition}
    Let $W$ denote an irreducible $T$-module. We say that $W$ is \emph{thin} whenever $\dim E_i^* W\leq 1$ for $0\leq i\leq D$. We say that $\Gamma$ is thin whenever every irreducible $T$-module of $\Gamma$ is thin.
\end{definition}

\begin{lemma}
\label{thin}
    Let $W$ denote a thin irreducible $T$-module with endpoint $r$ and diameter $d$. Then the following (i),(ii) hold:
    \begin{enumerate}[label=(\roman*)]
        \item {\rm{\cite[Lemma~3.9]{Ter1992JOACI}}} $\dim E^*_{i}W=1$ if and only if $r\leq i\leq r+d$;
        \item {\rm{\cite[Section~2]{LIW2020LAA}}} $AE^*_{i}W\subseteq E^*_{i-1}W+E^*_{i}W+E^*_{i+1}W$, where the sum is direct.
    \end{enumerate}
\end{lemma}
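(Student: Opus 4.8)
The plan is to derive both statements from the \emph{tridiagonal property} of $A$ relative to the idempotents $E^*_i$: namely $E^*_i A E^*_j=0$ whenever $|i-j|>1$. This is immediate from the triangle inequality, since the $(y,z)$-entry of $E^*_i A E^*_j$ is nonzero only when $\partial(x,y)=i$, $\partial(x,z)=j$, and $\partial(y,z)=1$, which forces $|i-j|\le 1$; equivalently $AE^*_j V\subseteq E^*_{j-1}V+E^*_j V+E^*_{j+1}V$ for $0\le j\le D$. From this I would first record the key inclusion $AE^*_i W\subseteq E^*_{i-1}W+E^*_i W+E^*_{i+1}W$: since $A,E^*_i\in T$ and $W$ is a $T$-module we have $AE^*_i W\subseteq W$, while $AE^*_i W\subseteq AE^*_i V\subseteq E^*_{i-1}V+E^*_i V+E^*_{i+1}V$ by the tridiagonal property. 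Hence for $w\in AE^*_i W$, writing $w=\sum_{\ell}E^*_\ell w$ via $I=\sum_\ell E^*_\ell$, each $E^*_\ell w$ lies in $W$ because $E^*_\ell\in T$, and $E^*_\ell w=0$ for $\ell\notin\{i-1,i,i+1\}$, so $w\in E^*_{i-1}W+E^*_i W+E^*_{i+1}W$. The sum is direct because $E^*_\ell W\subseteq E^*_\ell V$ and the eigenspaces $E^*_\ell V$ of $A^*$, having distinct eigenvalues $\theta^*_\ell$, are linearly independent; this gives (ii).

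For (i), I would put $S=\{i\mid 0\le i\le D,\ E^*_i W\ne 0\}$, so $r=\min S$ and $|S|=d+1$ by definition, and it then suffices to prove $S$ consists of consecutive integers. If it did not, I would pick consecutive elements $i<j$ of $S$ with $j\ge i+2$, so that $i+1,\dots,j-1\notin S$, and set $W_1=\sum_{\ell\le i}E^*_\ell W$ and $W_2=\sum_{\ell\ge j}E^*_\ell W$. Each $W_m$ is invariant under $M^*$, being a sum of eigenspaces of $A^*$, and invariant under $A$ by the key inclusion together with the gap hypothesis: for $\ell\le i$, $AE^*_\ell W\subseteq E^*_{\ell-1}W+E^*_\ell W+E^*_{\ell+1}W\subseteq W_1$, using $E^*_{i+1}W=0$ when $\ell=i$, and similarly for $W_2$. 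Thus $W_1$ and $W_2$ are nonzero $T$-submodules with $W=W_1\oplus W_2$, contradicting irreducibility. Hence $S=\{r,r+1,\dots,r+d\}$, and thinness ($\dim E^*_i W\le 1$) upgrades this to $\dim E^*_i W=1$ exactly for $r\le i\le r+d$.

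I do not anticipate a genuine obstacle here: both parts amount to bookkeeping with the tridiagonal property and the $T$-module axioms, and indeed (i) is \cite[Lemma~3.9]{Ter1992JOACI} while (ii) is in \cite[Section~2]{LIW2020LAA}, so one could alternatively just invoke those. The only points that need slight care are that the gap hypothesis in (i) is exactly what makes $W_1,W_2$ closed under $A$, and that the directness assertions be justified through the eigenspace decomposition of $A^*$ into the distinct $\theta^*_\ell$-eigenspaces.
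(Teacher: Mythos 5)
Your proof is correct: the tridiagonal identity $E^*_iAE^*_j=0$ for $|i-j|>1$ gives (ii) exactly as you describe, and the gap argument splitting $W$ into the two nonzero $T$-submodules $W_1,W_2$ is the standard way to see that $\{i\mid E^*_iW\neq 0\}$ is an interval, with thinness then upgrading ``nonzero'' to ``dimension $1$.'' The paper itself supplies no proof here --- it simply cites \cite[Lemma~3.9]{Ter1992JOACI} and \cite[Section~2]{LIW2020LAA} --- and your argument is essentially the one underlying those references, so there is nothing to flag.
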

 
\begin{lemma}{\rm{\cite[Definition~2.10,~Lemma~2.11]{LIW2020LAA}}}
    Let $W$ denote a thin irreducible $T$-module with endpoint $r$ and diameter $d$. In view of Lemma \ref{thin}, there exists a basis \begin{equation}
    \label{basis}
    \{w_i\in E^*_{r+i}W\mid 0\leq i\leq d\} 
    \end{equation}
    for $W$ and scalars
    \begin{equation*}
    b_i(W) \quad (0\leq i<d),\qquad \qquad a_i(W) \quad (0\leq i\leq d), \qquad \qquad c_i(W) \quad (0<i\leq d)
\end{equation*}
such that for $0\leq i\leq d$,
\begin{align*}
    &Aw_i=c_{i+1}(W)w_{i+1}+a_{i}(W)w_{i}+ b_{i-1}(W)w_{i-1},\qquad \qquad a_i(W)+b_i(W)+c_i(W)=\theta_t,
\end{align*}
where $b_{-1}(W)=b_{d}(W)=0$ and $c_{0}(W)=c_{d+1}(W)=0$.
\end{lemma}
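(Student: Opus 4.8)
The plan is to read the basis off the eigenspace data in Lemma~\ref{thin}, and then pin down the row-sum identity by choosing the basis vectors to be the $E^*$-components of an eigenvector of $A$ on $W$ with eigenvalue $\theta_t$, where $t$ is the dual endpoint of $W$.

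First I would set up the tridiagonal action for an arbitrary basis. By Lemma~\ref{thin}(i), $E^*_{r+i}W$ is $1$-dimensional for $0\le i\le d$ and $E^*_jW=0$ for all other $j$; since $I=\sum_{j=0}^{D}E^*_j$ and the $E^*_j$ are mutually orthogonal idempotents, $W=\bigoplus_{i=0}^{d}E^*_{r+i}W$, so any choice of nonzero $w_i\in E^*_{r+i}W$ is a basis for $W$. By Lemma~\ref{thin}(ii), $Aw_i\in E^*_{r+i-1}W+E^*_{r+i}W+E^*_{r+i+1}W$ with the sum direct, and the first (resp.\ last) summand is $0$ when $i=0$ (resp.\ $i=d$) by Lemma~\ref{thin}(i); hence there are scalars with $Aw_i=c_{i+1}(W)w_{i+1}+a_i(W)w_i+b_{i-1}(W)w_{i-1}$ for $0\le i\le d$, the conventions $b_{-1}(W)=b_d(W)=c_0(W)=c_{d+1}(W)=0$ handling the extremal cases. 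So far this holds for any choice of the $w_i$.

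The substance is the identity $a_i(W)+b_i(W)+c_i(W)=\theta_t$, which I would obtain from a careful normalization. By definition of the dual endpoint, $E_tW\ne 0$; fix a nonzero $v\in E_tW$, so $Av=\theta_tv$. The crucial fact is that $E^*_{r+i}v\ne 0$ for every $0\le i\le d$. Granting this, set $w_i=E^*_{r+i}v$: each $w_i$ lies in $E^*_{r+i}W$ and is nonzero, and $\sum_{i=0}^{d}w_i=\bigl(\sum_{j=0}^{D}E^*_j\bigr)v=v$, since $E^*_jv=0$ whenever $E^*_jW=0$. Applying $A$ to $v=\sum_i w_i$, expanding each $Aw_i$ by the tridiagonal relation, reindexing, and equating the coefficient of $w_k$ on the two sides of $Av=\theta_tv$ gives $a_k(W)+b_k(W)+c_k(W)=\theta_t$ for $0\le k\le d$ (the boundary conventions absorbing the extremal terms), which is precisely the claim; equivalently, $A$ has constant row sum $\theta_t$ in the basis $\{w_i\}$.

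The main obstacle is the non-vanishing $E^*_{r+i}v\ne 0$ --- that the eigenvector $v$ has a nonzero component in each $A^*$-eigenspace $E^*_{r+i}W$ of $W$. I would deduce it from the structure theory of thin irreducible $T$-modules of a $Q$-polynomial distance-regular graph. One first checks that $A$ acts on $W$ by an \emph{irreducible} tridiagonal matrix: for $0\le i<d$ the subspace $\mathrm{span}\{w_0,\dots,w_i\}$ is $A^*$-invariant and is $A$-invariant exactly when $c_{i+1}(W)=0$, so irreducibility of $W$ forces $c_{i+1}(W)\ne 0$, and the symmetric argument with $\mathrm{span}\{w_i,\dots,w_d\}$ forces $b_{i-1}(W)\ne 0$ for $1\le i\le d$. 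Then the $Q$-polynomial property supplies the dual relation (tridiagonality of $A^*$ with respect to the primitive idempotents $E_i$), so that $(A|_W,A^*|_W)$ is a Leonard pair in the sense of Terwilliger; a standard property of Leonard pairs is that every product $E_iE^*_j$ is nonzero on $W$, and since $v$ spans the $1$-dimensional space $E_tW$ this yields $E^*_{r+i}v\ne 0$. In the write-up I would cite \cite{Ter1992JOACI} and \cite[Definition~2.10,~Lemma~2.11]{LIW2020LAA} for this step rather than reprove it from scratch.
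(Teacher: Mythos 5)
Your argument is correct, but it is worth noting that the paper offers no proof of this lemma at all: it is quoted verbatim from \cite[Definition~2.10, Lemma~2.11]{LIW2020LAA}, so there is no internal proof to compare against. What you have done is supply the argument that the paper outsources. Your reduction is sound: Lemma \ref{thin} gives $W=\bigoplus_{i=0}^{d}E^*_{r+i}W$ with each summand one-dimensional and gives the tridiagonal shape of $A$ in any basis of nonzero vectors $w_i\in E^*_{r+i}W$; the irreducibility of $W$ (via the $T$-invariance of $\mathrm{span}\{w_0,\dots,w_i\}$ and $\mathrm{span}\{w_i,\dots,w_d\}$) forces the off-diagonal entries to be nonzero; and once one knows that the components $E^*_{r+i}v$ of a nonzero $v\in E_tW$ are all nonzero, taking $w_i=E^*_{r+i}v$ and comparing coefficients in $Av=\theta_t v$ yields the row-sum identity $a_i(W)+b_i(W)+c_i(W)=\theta_t$ exactly as you say. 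You correctly identify the one genuinely nontrivial ingredient --- the non-vanishing $E^*_{r+i}v\neq 0$ --- and you are right that it does not follow from the three-term recurrence alone; it needs the dual tridiagonal relation $A^*E_iW\subseteq E_{i-1}W+E_iW+E_{i+1}W$ coming from the $Q$-polynomial property. Two small caveats: the claim that \emph{every} product $E_iE^*_j$ is nonzero on $W$ is stronger than what the standard Leonard-pair literature gives you off the shelf and stronger than what you need; the standard statement is precisely that $E^*_rv,\dots,E^*_{r+d}v$ is a basis when $v$ spans the eigenspace for the first eigenvalue in a standard ordering (equivalently, the dual irreducible tridiagonal action shows $M^*v=W$, which has dimension $d+1$, so none of the at most $d+1$ projections $E^*_{r+i}v$ can vanish). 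Since you ultimately cite \cite{Ter1992JOACI} and \cite{LIW2020LAA} for that step, your proof ends up resting on essentially the same external source as the paper's citation, but it makes explicit where the content lies, which is a genuine improvement in transparency.
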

The basis (\ref{basis}) is called \emph{standard}. The scalars $b_i(W), a_i(W), c_i(W)$ are called the \emph{intersection numbers of $W$}.

\begin{lemma}{\rm{\cite[Lemma~5.9(i)]{Cerzo}}}
\label{EAE}
    Let $W$ denote a thin irreducible $T$-module with endpoint $r$ and diameter $d$. Then the following holds on $W$:
    \begin{equation*}
        E_{r+i}^*AE_{r+i}^*=a_i(W)E^*_{r+i}\qquad \qquad (0\leq i\leq d).
    \end{equation*}
\end{lemma}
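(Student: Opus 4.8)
The plan is to verify the operator identity directly on a standard basis of $W$. By the preceding lemma, fix a standard basis $\{w_j\in E^*_{r+j}W\mid 0\leq j\leq d\}$ for $W$ together with the intersection numbers $b_j(W),a_j(W),c_j(W)$ satisfying
\begin{equation*}
    Aw_j=c_{j+1}(W)w_{j+1}+a_j(W)w_j+b_{j-1}(W)w_{j-1}\qquad (0\leq j\leq d),
\end{equation*}
where $b_{-1}(W)=b_d(W)=c_0(W)=c_{d+1}(W)=0$. Since $w_j\in E^*_{r+j}W\subseteq E^*_{r+j}V$ and the $\{E^*_m\}_{m=0}^{D}$ are mutually orthogonal idempotents summing to $I$, we have $E^*_m w_j=\delta_{m,r+j}\,w_j$ for $0\leq m\leq D$ and $0\leq j\leq d$. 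Both $E^*_{r+i}AE^*_{r+i}$ and $a_i(W)E^*_{r+i}$ lie in $T$, hence act on the $T$-module $W$; so to prove the claimed identity on $W$ it suffices to check that the two operators agree on each basis vector $w_j$.

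First suppose $j\neq i$. Then $E^*_{r+i}w_j=0$, so $E^*_{r+i}AE^*_{r+i}w_j=0$ and likewise $a_i(W)E^*_{r+i}w_j=0$. Now suppose $j=i$. Then $E^*_{r+i}w_i=w_i$, so applying $E^*_{r+i}$ to the three-term recurrence with $j=i$ gives
\begin{equation*}
    E^*_{r+i}AE^*_{r+i}w_i=c_{i+1}(W)E^*_{r+i}w_{i+1}+a_i(W)E^*_{r+i}w_i+b_{i-1}(W)E^*_{r+i}w_{i-1}=a_i(W)w_i,
\end{equation*}
since $E^*_{r+i}w_{i+1}=0$ and $E^*_{r+i}w_{i-1}=0$ whenever $w_{i\pm1}$ is defined, while $E^*_{r+i}w_i=w_i$. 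On the other hand $a_i(W)E^*_{r+i}w_i=a_i(W)w_i$. Thus the two operators agree on every $w_j$, and the identity follows on $W$.

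There is no substantive obstacle here beyond careful bookkeeping with the indices; the one point to watch is the boundary conventions $b_{-1}(W)=b_d(W)=c_0(W)=c_{d+1}(W)=0$, which are exactly what makes the off-diagonal terms disappear after applying $E^*_{r+i}$ even in the edge cases $i=0$ and $i=d$. Equivalently, one can argue basis-free by observing that $E^*_{r+i}W=\mathbb{C}w_i$, that $E^*_{r+i}AE^*_{r+i}$ annihilates $E^*_{r+j}W$ for $j\neq i$, and that it acts as the scalar $a_i(W)$ on $E^*_{r+i}W$; but the basis computation above is the most transparent route.
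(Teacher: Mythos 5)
Your proof is correct. The paper itself does not prove this lemma; it is quoted from \cite{Cerzo} as a known fact, so there is no in-paper argument to compare against. Your direct verification---expanding $A w_i$ via the three-term recurrence for a standard basis and using the orthogonality $E^*_m w_j=\delta_{m,r+j}w_j$ to kill the off-diagonal terms---is the standard argument and is complete, including the boundary cases $i=0$ and $i=d$.
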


\begin{lemma}{\rm{\cite[Lemma~10.3]{Ter2024arxiv}}}
\label{acomparison}
    Let $W$ denote a thin irreducible $T$-module with endpoint $r$ and diameter $d$. Then for $0\leq i\leq d$, 
    \begin{equation*}
        a_i(W)\leq a_{r+i}.
    \end{equation*}
\end{lemma}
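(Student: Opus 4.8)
The plan is to compare the "local" intersection number $a_i(W)$, which governs the action of $A$ on the $E^*_{r+i}$-component of the single irreducible module $W$, with the "global" intersection number $a_{r+i}$, which records the diagonal entry $(E^*_{r+i}AE^*_{r+i})_{y,y}$ summed over all of $V$. The key structural input is Lemma~\ref{EAE}, which tells us that on $W$ we have $E^*_{r+i}AE^*_{r+i}=a_i(W)E^*_{r+i}$. First I would decompose the standard module $V$ as a direct sum of irreducible $T$-modules, say $V=\bigoplus_\ell W_\ell$ with $W=W_1$, and observe that $E^*_{r+i}AE^*_{r+i}$ acts on each $W_\ell$ as a scalar (either $a_{i_\ell}(W_\ell)$ for the appropriate shifted index $i_\ell$, or $0$ if $E^*_{r+i}W_\ell=0$). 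Restricting attention to a single vertex $y\in\Gamma_{r+i}(x)$, the $(y,y)$-entry of $A$ counts the number of neighbors $z$ of $y$ with $\partial(x,z)=r+i$; this is exactly $a_{r+i}$, and it equals the sum of the contributions $\langle$ projection onto $\widehat y\rangle$ from each $W_\ell$ containing $\widehat y$ in its $E^*_{r+i}$-layer.

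Concretely, I would fix a standard basis vector $w_i\in E^*_{r+i}W$ with $\|w_i\|^2>0$ (using that the Hermitian form is positive definite and $E^*_{r+i}W\ne 0$ since $0\le i\le d$), and compute $\langle A w_i, w_i\rangle = a_i(W)\|w_i\|^2$ from Lemma~\ref{EAE}. On the other hand, writing $w_i=\sum_{y\in\Gamma_{r+i}(x)}c_y\widehat y$, expand $\langle Aw_i,w_i\rangle = \sum_{y,z}\overline{c_y}c_z A_{y,z}$, where the sum is over pairs $y,z\in\Gamma_{r+i}(x)$ with $\partial(y,z)=1$. The point is then to bound this quadratic form: the matrix $E^*_{r+i}AE^*_{r+i}$ is a nonnegative symmetric matrix (adjacency-type) whose row sums are at most $a_{r+i}$ — in fact each row sum of $E^*_{r+i}AE^*_{r+i}$, indexed by $y\in\Gamma_{r+i}(x)$, equals exactly $a_{r+i}$ by distance-regularity. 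Hence $a_{r+i}I - E^*_{r+i}AE^*_{r+i}$ is a symmetric diagonally-dominant matrix with nonnegative diagonal and nonpositive off-diagonal entries, so it is positive semidefinite; therefore $\langle(a_{r+i}I - E^*_{r+i}AE^*_{r+i})w_i, w_i\rangle\ge 0$, which gives $a_i(W)\|w_i\|^2 \le a_{r+i}\|w_i\|^2$, and dividing by $\|w_i\|^2>0$ yields the claim.

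The main obstacle I anticipate is making the positive-semidefiniteness argument clean: one must confirm that $E^*_{r+i}AE^*_{r+i}$ really has constant row sum $a_{r+i}$ (immediate from the definition of $a_{r+i}=p^{r+i}_{1,r+i}$ and the fact that for $y$ with $\partial(x,y)=r+i$, the neighbors $z$ of $y$ with $\partial(x,z)=r+i$ number exactly $a_{r+i}$), and that the Gershgorin/diagonal-dominance conclusion applies to a real symmetric matrix with these sign patterns. An alternative route, which may be what the authors use, is to instead invoke the known formula expressing $a_i(W)$ in terms of the eigenvalue data together with $a_{r+i}$ directly, or to use the fact (from the theory of thin modules) that $a_i(W)$ is a weighted average of the $a_{r+i}$ over a suitable range; but the semidefiniteness argument above is self-contained given only Lemma~\ref{EAE} and basic distance-regularity, so that is the approach I would present.
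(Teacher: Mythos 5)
The paper does not prove this lemma at all; it is quoted verbatim from \cite[Lemma~10.3]{Ter2024arxiv}, so there is no in-paper proof to compare against. Your argument is correct and self-contained: by Lemma~\ref{EAE} the scalar $a_i(W)$ is an eigenvalue of $E^*_{r+i}AE^*_{r+i}$ restricted to $E^*_{r+i}V$ (with eigenvector $w_i\neq 0$, using $r\leq r+i\leq r+d$), and that restriction is the adjacency matrix of the $(r+i)$-th subconstituent, a symmetric $0$--$1$ matrix with zero diagonal and constant row sum $a_{r+i}=p^{r+i}_{1,r+i}$; your diagonal-dominance (equivalently Perron--Frobenius) bound then gives $a_i(W)\leq a_{r+i}$. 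This is essentially the standard argument for this fact (and is the same mechanism the present paper reuses in its proof of Lemma~\ref{Dth}, where equality in this bound is analyzed), so there is nothing to object to; the only cosmetic point is that you do not need the full decomposition of $V$ into irreducibles --- the single eigenvector $w_i$ together with the row-sum bound already suffices.
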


\section{The nucleus of $\Gamma$}
We continue to discuss the $Q$-polynomial distance-regular graph $\Gamma$.
Fix $x\in X$ and write $T=T(x)$.
In this section we define the nucleus of $\Gamma$ with respect to $x$. We recall from \cite{Ter2024arxiv} some facts about the nucleus.

\begin{definition}
    Let $W$ denote an irreducible $T$-module with endpoint $r$, dual endpoint $t$, and diameter $d$. By the \emph{displacement of $W$}, we mean the integer $r+t-D+d$.
\end{definition}

\begin{lemma}{\rm{\cite[Proposition~6.5]{Ter2024arxiv}}}
     The displacement of any irreducible $T$-module is nonnegative.
\end{lemma}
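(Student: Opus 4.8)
The assertion is that $r+t-D+d\ge 0$; equivalently, for every irreducible $T$-module $W$ with endpoint $r$, dual endpoint $t$, and diameter $d$, one has $r+t+d\ge D$. The plan is to compare the ``local'' data of $W$ with the ``global'' data of $\Gamma$, working inside $W$.

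\emph{Step 1 (shape of the support).} Since $A$ acts tridiagonally with respect to $\{E_i^*\}_{i=0}^D$ (triangle inequality) and, as $\Gamma$ is $Q$-polynomial, $A^*$ acts tridiagonally with respect to $\{E_i\}_{i=0}^D$, irreducibility of $W$ forces $\{i : E_i^* W\ne 0\}=\{r,r+1,\ldots,r+d\}$ and $\{i: E_i W\ne 0\}=\{t,t+1,\ldots,t+d\}$; here one uses $d=\delta$ from \cite[Lemma~5.5]{Ter1993JOACIII}. (For $J_q(N,D)$ the irreducible $T$-modules are thin by \cite{LIW2020LAA}, so we may take $W$ thin.) Irreducibility also gives $c_i(W)\ne 0$ for $1\le i\le d$ and $b_i(W)\ne 0$ for $0\le i\le d-1$, since otherwise a ``top'' or ``bottom'' span of the standard basis (\ref{basis}) would be a proper nonzero $T$-submodule. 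Consequently a nonzero $u\in E_t W$ generates $W$ over $M^*$ and satisfies $E_i^* u\ne 0$ for every $i\in[r,r+d]$, and dually a nonzero $v\in E_r^* W$ generates $W$ over $M$ with $E_j v\ne 0$ for every $j\in[t,t+d]$.

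\emph{Step 2 (pass to the arithmetic).} The key point is that, $\Gamma$ being $Q$-polynomial, $(A,A^*)$ restricted to $W$ is a Leonard pair whose eigenvalue list $\theta_t,\ldots,\theta_{t+d}$ and dual eigenvalue list $\theta_r^*,\ldots,\theta_{r+d}^*$ are contiguous sublists of the global lists $\theta_0>\cdots>\theta_D$ and $\theta_0^*,\ldots,\theta_D^*$, which themselves form a Leonard pair of diameter $D$ (realized on the primary $T$-module). Hence the three-term recurrences governing the two lists on $W$ are inherited from the global ones, and $a_i(W),b_i(W),c_i(W)$ admit the corresponding closed forms. Combining this with the comparison relation $a_i(W)\le a_{r+i}$ (Lemma~\ref{acomparison}, via Lemma~\ref{EAE}) and its analogues for $b_i,c_i$, together with the nonvanishing from Step~1, one shows that the assumption $r+t+d\le D-1$ would force one of the intersection numbers of $W$ to vanish, a contradiction. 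Therefore $r+t+d\ge D$.

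The crux, and the main obstacle, is Step~2: identifying precisely which intersection number degenerates, and doing so uniformly across the $q$-Racah, Racah, Hahn, \ldots\ cases of the Leonard-pair classification. This is exactly \cite[Proposition~6.5]{Ter2024arxiv}, which we invoke. For $J_q(N,D)$ one may alternatively argue directly: using \cite{LIW2020LAA} to express $r,t,d$ in terms of the type $(\alpha,\beta,\rho)$ of the $\mathcal{H}$-module attached to $W$, the displacement becomes a manifestly nonnegative combination of $\beta$ and $\rho$.
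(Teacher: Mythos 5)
The paper offers no proof of this lemma at all: it is quoted verbatim from \cite[Proposition~6.5]{Ter2024arxiv} and used as a black box. Your main line of argument (Steps 1--2) ultimately does the same thing --- you concede that the crux of Step 2 ``is exactly \cite[Proposition~6.5]{Ter2024arxiv}, which we invoke'' --- so as written your proposal is not an independent proof but the same citation wrapped in a sketch; the sketch itself does not close, since you never identify which intersection number would degenerate under the hypothesis $r+t+d\leq D-1$. Your closing remark, however, is a genuinely different and complete argument \emph{for the Grassmann graph}: using Lemma \ref{conversion}, the displacement of $\widetilde{\Omega}$ equals $\rho+\beta$ in cases (C1) and (C2) and equals $\rho+\alpha+N-2D$ in case (C3), and each of these is nonnegative by (\ref{abcondition}) and the standing assumption $N\geq 2D$; since every irreducible $T$-module in $V$ is isomorphic to some $\widetilde{\Omega}$, this settles the claim for $J_q(N,D)$. (Even more directly, constraint (i) of Section 6 gives $r\geq\frac{D-d}{2}$ and $t\geq\frac{D-d}{2}$, hence $r+t+d-D\geq 0$ in one line.) The caveat is that the lemma is stated in Section 3 for an arbitrary $Q$-polynomial distance-regular graph $\Gamma$, so the Grassmann-specific computation does not establish the statement in the generality in which it is quoted; for that, the citation to Terwilliger remains necessary.
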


\begin{lemma}{\rm{\cite[Proposition~6.10]{Ter2024arxiv}}}
    Let $W$ denote an irreducible $T$-module with endpoint $r$, dual endpoint $t$, diameter $d$ such that the displacement is equal to $0$. Then 
\begin{equation}
\label{r=t}
    r=t,\qquad \qquad D=d+2r.
\end{equation}
\end{lemma}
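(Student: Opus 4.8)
The plan is to first reduce the statement and then isolate where the real work lies. Write $\eta=r+t-D+d$ for the displacement, so the hypothesis is $\eta=0$. The two asserted conclusions are then equivalent: if $r=t$ then $D=r+t+d=2r+d$, and conversely if $D=2r+d$ then $0=\eta=r+t-(2r+d)+d=t-r$, so $r=t$. Hence it suffices to prove $r=t$. Moreover $\eta$ is symmetric in $r$ and $t$, and the combinatorial data underlying the problem (the eigenvalues $\theta_i$, the dual eigenvalues $\theta_i^{*}$, and the parameters $r,t,d,D$) carries the formal duality of the $Q$-polynomial structure, under which the roles of $A$ and $A^{*}$, of the $E_i$ and the $E_i^{*}$, and of $r$ and $t$ are interchanged while $D$, $d$ and $\eta$ are unchanged. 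Therefore, provided the argument uses only this combinatorial data, it is enough to prove the single inequality $r\le t$; applying the duality then yields $t\le r$ as well.

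Next I would set up the structure carried by $W$. By $d=\delta$ we have $E_iW\neq 0$ exactly for $t\le i\le t+d$ and $E_i^{*}W\neq 0$ exactly for $r\le i\le r+d$; equivalently, on $W$ the matrix $A$ has eigenvalues $\theta_t,\dots,\theta_{t+d}$ and $A^{*}$ has eigenvalues $\theta_r^{*},\dots,\theta_{r+d}^{*}$, and the pair $(A,A^{*})$ acts on $W$ as a tridiagonal pair. The hypothesis $\eta=0$ says $d=D-r-t$, i.e.\ the $A^{*}$-support $[r,r+d]$ and the $A$-support $[t,t+d]$ are mirror images of each other across $D/2$; so proving $r=t$ is exactly proving that these two bands coincide. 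To attack this I would bring in Terwilliger's split decomposition of $W$ attached to the tridiagonal pair $(A,A^{*})$, together with the raising and lowering maps on the $E_i^{*}$-grading and their duals on the $E_i$-grading, and use that the resulting shape of $W$ is symmetric and that the $\theta_i$ and the $\theta_i^{*}$ satisfy a common three-term recurrence (a consequence of the $Q$-polynomial property), so that a module can occupy a band of eigenvalue positions only in a constrained way.

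The crux, and the step I expect to be the main obstacle, is to convert the hypothesis $\eta=0$ into the numerical inequality $r\le t$ (equivalently $2r+d\le D$). Here I would revisit the proof that the displacement is nonnegative: that bound should come from a nonzero pairing between an extreme component of the split decomposition of $W$ and an extreme eigenspace of the ambient graph, which forces $r+t+d\ge D$; in the equality case this pairing is forced to be as large as possible, so the extreme component of $W$ is fully supported against that extreme eigenspace, and translating this tightness through the eigenvalue data should give $2r+d\le D$. By the formal duality this also gives $2t+d\le D$, and together with $r+t+d=D$ and $d=\delta$ these force $r=t$ and $D=2r+d$, as required. The delicate part is the precise form of the split-decomposition pairing and the bookkeeping needed to extract the inequality from its tightness; this is exactly where the machinery developed in \cite{Ter2024arxiv} to establish $\eta\ge 0$ would have to be pushed to handle the equality case.
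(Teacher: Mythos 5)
Your reduction is sound: given $\eta:=r+t+d-D=0$, the two conclusions $r=t$ and $D=d+2r$ are equivalent, and your observation that the supports $[r,r+d]$ and $[t,t+d]$ are mirror images across $D/2$ correctly locates what must be shown. But there is a genuine gap at exactly the point you yourself flag as ``the crux'': you never establish the inequality $r\le t$. The passage ``that bound should come from a nonzero pairing \dots in the equality case this pairing is forced to be as large as possible \dots should give $2r+d\le D$'' describes what an argument might look like rather than giving one; no inequality is actually derived, and you do not even commit to its direction with a justification. Note that $\eta$ is symmetric in $r$ and $t$, so the hypothesis $\eta=0$ alone can never break the symmetry: you must import a second, genuinely new inequality (for instance $2r+d\ge D$, i.e.\ $r\ge\frac{D-d}{2}$, together with its dual $2t+d\ge D$; combined with $r+t+d=D$ these give $r\ge t$ and $t\ge r$). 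Producing that inequality \emph{is} the content of the lemma, and it is missing. A secondary weakness is the appeal to ``formal duality'' to convert $r\le t$ into $t\le r$: a general $Q$-polynomial distance-regular graph admits no symmetry exchanging $A$ with $A^*$, so the dual statement comes for free only if the primal proof uses nothing beyond the self-dual tridiagonal relations $E_i^*AE_j^*=0$ and $E_iA^*E_j=0$ for $|i-j|>1$ --- which cannot be checked, since there is no primal proof to inspect.

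For comparison, the paper gives no proof of this statement at all; it is quoted from \cite[Proposition~6.10]{Ter2024arxiv}, so your attempt cannot be matched against an internal argument. If you only need the lemma for the Grassmann graph (the sole place it is used here), a complete and short route is available from the classification in Section 6 (via \cite[Example~6.1]{Ter1993JOACIII}): every irreducible $T$-module of $J_q(N,D)$ satisfies $\frac{D-d}{2}\le r\le t$, which yields $2r+d\ge D$ and $2t+d\ge D$, and hence $r=t$ and $D=d+2r$ when $\eta=0$. For the general statement, the inequality has to be extracted from the split-decomposition machinery of \cite{Ter2024arxiv}, and that extraction is the step your proposal leaves undone.
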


\begin{definition}
    By the \emph{nucleus of $\Gamma$ with respect to $x$}, we mean the span of the irreducible $T$-modules that have displacement $0$. 
\end{definition}

Note that the nucleus is a $T$-module. Next we describe the nucleus from another point of view.

\begin{definition}
\label{Ni}
    For $0\leq i\leq D$, define the subspace $\mathcal{N}_i=\mathcal{N}_{i}(x)$ to be
    \begin{equation*}
        (E^*_{0}V+E^*_{1}V+\cdots+E^*_{i}V)\cap (E_0V+E_1V+\cdots +E_{D-i}V).
    \end{equation*}
\end{definition}

\begin{lemma}{\rm{\cite[Lemma~7.3]{Ter2024arxiv}}}
    Referring to Definition \ref{Ni}, the sum
    \begin{equation*}
    \mathcal{N}_{0}+\mathcal{N}_{1}+\cdots +\mathcal{N}_{D}
    \end{equation*}
    is direct. 
\end{lemma}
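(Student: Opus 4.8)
The plan is to show that the sum $\mathcal{N}_0 + \mathcal{N}_1 + \cdots + \mathcal{N}_D$ is direct by exhibiting each $\mathcal{N}_i$ inside a complementary piece of a single convenient direct-sum decomposition of $V$. The natural decomposition to use is the refinement of $V = \sum_{i=0}^D E_i^* V$ obtained by intersecting with the filtration by the $E_jV$'s, or dually. Concretely, I would set $U_i = (E_0^*V + \cdots + E_i^*V)$ and $U'_j = (E_0V + \cdots + E_jV)$, so that $\mathcal{N}_i = U_i \cap U'_{D-i}$. The key structural input is that the standard module decomposes as a direct sum of irreducible $T$-modules, and on each such module the action of $M$ and $M^*$ is well understood: if $W$ is irreducible with endpoint $r$, dual endpoint $t$, and diameter $d$, then $E_\ell^* W \neq 0$ exactly for $r \le \ell \le r+d$ and $E_mW \neq 0$ exactly for $t \le m \le t+d$ (using $d = \delta$).

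First I would reduce to a single irreducible $T$-module $W$: since $V$ is a direct sum of irreducibles and each of $U_i$, $U'_j$, hence each $\mathcal{N}_i$, is a $T$-submodule (being built from elements of $M$ and $M^*$), it suffices to prove that $(\mathcal{N}_0 \cap W) + \cdots + (\mathcal{N}_D \cap W)$ is direct for each irreducible $W$. Next I would compute $\mathcal{N}_i \cap W$ in terms of the parameters $r, t, d$ of $W$. One has $U_i \cap W = \sum_{\ell=r}^{\min(i, r+d)} E_\ell^* W$ (zero if $i < r$) and $U'_{D-i} \cap W = \sum_{m=t}^{\min(D-i,\, t+d)} E_m W$ (zero if $D - i < t$). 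Intersecting these two subspaces of $W$ requires comparing the $E^*$-support and the $E$-support; here is where the displacement bound $r + t + d \ge D$ does the work. I expect that $\mathcal{N}_i \cap W$ is nonzero for at most the single value $i = r + (d - (t - (D-d-r))) $ — more cleanly, $\mathcal{N}_i \cap W \neq 0$ forces $i$ to lie in a range of length controlled by the displacement $e = r + t + d - D$, and when $e = 0$ there is exactly one such $i$, namely $i = r + d = D - r$. Thus within each irreducible module the contributions to the sum $\sum_i \mathcal{N}_i$ occupy disjoint index sets (or are concentrated at a single index), which gives directness.

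The main obstacle is the intersection computation $U_i \cap W \cap U'_{D-i} \cap W$: knowing the $E^*$-support and $E$-support of $W$ individually does not immediately tell you how $\sum_{\ell \le i} E_\ell^* W$ meets $\sum_{m \le D-i} E_m W$ inside $W$, because $E_\ell^*$ and $E_m$ do not commute. I would handle this by using the raising/lowering structure: on a thin $W$, the operator $A$ (equivalently $E_1^*$-type generators) shifts the $E^*$-grading by at most one step, and dually $A^*$ shifts the $E$-grading by at most one step, which pins down where a vector lying in a bottom chunk of the $E^*$-filtration can sit in the $E$-filtration. For the non-thin case one argues with dimensions via the known fact $\dim E_\ell^* W$ and $\dim E_m W$ match up suitably, or simply invokes that the relevant statement is proved in full generality in \cite[Lemma~7.3]{Ter2024arxiv} and the excerpt only needs it as a citation; but to make the argument self-contained I would push the thin-case computation through and note the general case follows by the same support bookkeeping together with the nonnegativity of displacement from \cite[Proposition~6.5]{Ter2024arxiv}.
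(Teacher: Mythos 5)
The paper does not actually prove this lemma; it is imported verbatim from \cite[Lemma~7.3]{Ter2024arxiv}, where it follows from the split/displacement decomposition. Judging your argument on its own terms, there is a genuine gap. First, a repairable error: $\mathcal{N}_i$ is \emph{not} a $T$-submodule, since $E_0^*V+\cdots+E_i^*V$ is not $A$-invariant and $E_0V+\cdots+E_{D-i}V$ is not $A^*$-invariant, so "being built from elements of $M$ and $M^*$" does not justify the reduction. The reduction to a single irreducible $W$ can still be made, because both filtrations (hence their intersection) split along any decomposition of $V$ into irreducible $T$-modules. The fatal problem is the endgame. Your claim that for a displacement-zero module $W$ there is exactly one index $i$ with $\mathcal{N}_i\cap W\neq 0$ is false: the support conditions only confine $i$ to the range $r\leq i\leq D-t=r+d$, which has $d+1$ elements, and in fact every such $i$ does occur. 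For example, the primary module $W_0$ (with $r=t=0$, $d=D$) contributes a one-dimensional subspace to every $\mathcal{N}_i$, $0\leq i\leq D$; this is exactly what Proposition \ref{dimension} records, since $\dim\mathcal{N}_i=\sum_{r\leq i}\mult_r$ counts each displacement-zero module once for every $i$ between its endpoint $r$ and $D-r$. So the "concentrated at a single index" argument cannot give directness.

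What the proof actually needs is the vanishing $(E_0^*V+\cdots+E_{i-1}^*V)\cap(E_0V+\cdots+E_{D-i}V)=0$ for $1\leq i\leq D$: granting this, if $\sum_i v_i=0$ with $v_i\in\mathcal{N}_i$ and $i_0$ is the largest index with $v_{i_0}\neq 0$, then $v_{i_0}=-\sum_{i<i_0}v_i$ lies in that zero intersection, a contradiction. Restricted to an irreducible $W$, this vanishing amounts to $\bigl(\sum_{\ell\leq r+a}E_\ell^*W\bigr)\cap\bigl(\sum_{m\leq t+b}E_mW\bigr)=0$ whenever $a+b<d$, which is strictly stronger than knowing which $E_\ell^*W$ and $E_mW$ are nonzero; it is exactly the point where the tridiagonal actions of $A$ on the $E^*$-decomposition and of $A^*$ on the $E$-decomposition must be exploited (this is the substance behind the nonnegativity of displacement, and of the split decomposition used in \cite{Ter2005GC, Ter2024arxiv}). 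You correctly flag this intersection computation as the main obstacle, but the support bookkeeping you substitute for it does not overcome it, and the single-index claim you lean on instead is false.
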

\begin{lemma}{\rm{\cite[Theorem~7.5]{Ter2024arxiv}}}
\label{defN}
    Define the subspace $\mathcal{N}=\mathcal{N}(x)$ to be
    \begin{equation*}
        \mathcal{N}_{0}+\mathcal{N}_{1}+\cdots +\mathcal{N}_{D}.
    \end{equation*}
    The subspace $\mathcal{N}$ is the nucleus of $\Gamma$ with respect to $x$.
\end{lemma}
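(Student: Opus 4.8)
The plan is to decompose the standard module $V$ into irreducible $T$-modules, track how $\mathcal{N}=\mathcal{N}_0+\cdots+\mathcal{N}_D$ meets each one, and let the displacement decide the outcome. Write $V=\bigoplus_\ell W_\ell$ with each $W_\ell$ an irreducible $T$-module, and set $V_i^{\downarrow}=E^*_0V+\cdots+E^*_iV$ and $V_j^{\uparrow}=E_0V+\cdots+E_jV$, so that $\mathcal{N}_i=V_i^{\downarrow}\cap V_{D-i}^{\uparrow}$. Since each $W_\ell$ is invariant under every $E^*_m$ and every $E_m$, we have $E^*_mV=\bigoplus_\ell E^*_mW_\ell$ and $E_mV=\bigoplus_\ell E_mW_\ell$, hence $V_i^{\downarrow}=\bigoplus_\ell(V_i^{\downarrow}\cap W_\ell)$ with $V_i^{\downarrow}\cap W_\ell=\sum_{m\le i}E^*_mW_\ell$, and likewise for $V_j^{\uparrow}$. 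An intersection of two subspaces each split along $\bigoplus_\ell W_\ell$ splits the same way, so $\mathcal{N}_i=\bigoplus_\ell(\mathcal{N}_i\cap W_\ell)$ and therefore $\mathcal{N}=\bigoplus_\ell\mathcal{N}^{(\ell)}$ with $\mathcal{N}^{(\ell)}=\sum_i(\mathcal{N}_i\cap W_\ell)$.

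This reduces the theorem to a statement about one irreducible $T$-module $W$ with endpoint $r$, dual endpoint $t$, diameter $d$ (so the dual diameter is also $d$) and displacement $e=r+t-D+d\ge 0$: one must show $\sum_i(\mathcal{N}_i\cap W)$ equals $W$ if $e=0$ and equals $0$ if $e>0$. Granting this, $\mathcal{N}=\bigoplus_{\ell:\,e_\ell=0}W_\ell$; and because every irreducible submodule of $V$ lies in the span of the $W_\ell$ isomorphic to it, while the displacement (being read off from the matrices $E^*_i,E_i\in T$) is an isomorphism invariant, this span is exactly the span of the displacement-$0$ irreducible $T$-modules, i.e.\ the nucleus. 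Now since $E^*_mW\ne 0$ exactly for $r\le m\le r+d$ and $E_mW\ne 0$ exactly for $t\le m\le t+d$, one computes $\mathcal{N}_i\cap W=\big(E^*_rW+\cdots+E^*_{r+a}W\big)\cap\big(E_tW+\cdots+E_{t+b}W\big)$, which vanishes unless $r\le i\le D-t$; and since $D-t=r+d-e\le r+d$, in that range $a=i-r$ and $b=D-i-t$ with $a+b=D-r-t=d-e$. So everything comes down to an internal question about $W$: for $0\le a,b\le d$, what is the dimension of $\big(E^*_rW+\cdots+E^*_{r+a}W\big)\cap\big(E_tW+\cdots+E_{t+b}W\big)$?

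The inclusion–exclusion bound gives this dimension $\ge\sum_{k\le a}\dim E^*_{r+k}W+\sum_{k\le b}\dim E_{t+k}W-\dim W$, which in the thin case (the relevant one for $J_q(N,D)$, where each summand is $1$-dimensional and $\dim W=d+1$) is $\ge a+b-d+1=1-e$. Using this together with the fact that $\mathcal{N}_0+\cdots+\mathcal{N}_D$ is direct, the case $e=0$ is immediate: here $r=t$ and $D=d+2r$, so $D-t=r+d$, the bound forces $\mathcal{N}_i\cap W$ to be $1$-dimensional for each of the $d+1$ indices $r\le i\le r+d$ and $0$ otherwise, and the restricted sum $\sum_i(\mathcal{N}_i\cap W)$, being direct and contained in $W$, has dimension $d+1=\dim W$ and hence equals $W$. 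The case $e>0$ is where the real work is: the inclusion–exclusion bound then says nothing, and one must show the intersection above is actually $0$ whenever $a+b\le d-1$ — equivalently, that the descending flags $\{V_i^{\downarrow}\cap W\}$ and $\{V_j^{\uparrow}\cap W\}$ are in \emph{relative general position} inside $W$.

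I expect this general-position statement to be the main obstacle. For thin $W$ the pair $(A,A^*)$ acts on $W$ as a Leonard pair, and the required fact is a standard property of Leonard pairs: it is equivalent to the nonvanishing of the appropriate corner minors of the transition matrix between the eigenbases of $A|_W$ and of $A^*|_W$, and it is essentially Terwilliger's split decomposition (for a general $Q$-polynomial distance-regular graph it is the split decomposition for tridiagonal pairs). Granting it, $\mathcal{N}_i\cap W=0$ for all $i$ whenever $e>0$, so $\mathcal{N}^{(\ell)}=0$ for every $W_\ell$ of positive displacement, and combined with the previous paragraph we obtain $\mathcal{N}=\bigoplus_{\ell:\,e_\ell=0}W_\ell$, finishing the proof. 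I would structure the write-up so that everything except this one general-position lemma uses only the formal consequences of $Q$-polynomiality already recorded (the displacement being nonnegative, the constraints $r=t$, $D=d+2r$ in the displacement-$0$ case, and the directness of $\mathcal{N}_0+\cdots+\mathcal{N}_D$).
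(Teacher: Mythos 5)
The first thing to note is that the paper offers no proof of this lemma: it is imported verbatim from \cite[Theorem~7.5]{Ter2024arxiv}, so there is no in-paper argument to compare yours against. Judged on its own terms, your reduction is sound: splitting $V$ into irreducible $T$-modules, observing that each $\mathcal{N}_i$ is compatible with that splitting because $E_m, E_m^*\in T$, and invoking the isotypic-component argument together with the isomorphism-invariance of the displacement correctly reduces the claim to showing that $\sum_i(\mathcal{N}_i\cap W)$ equals $W$ or $0$ according as the displacement $e$ of $W$ is $0$ or positive. Your $e=0$ count via inclusion--exclusion plus the directness of $\mathcal{N}_0+\cdots+\mathcal{N}_D$ is also correct, but only \emph{for thin modules}: the step $\dim U=a+1$, $\dim U'=b+1$, $\dim W=d+1$ uses $\dim E^*_{r+k}W=\dim E_{t+k}W=1$, so as written the argument establishes the lemma for thin $\Gamma$ (which covers the Grassmann graph, but the statement in Section~3 is made for a general $Q$-polynomial $\Gamma$).

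The genuine gap is the one you flagged yourself: the vanishing of $\bigl(E^*_rW+\cdots+E^*_{r+a}W\bigr)\cap\bigl(E_tW+\cdots+E_{t+b}W\bigr)$ for $a+b<d$ is not proved, only attributed to ``a standard property of Leonard pairs.'' The attribution is correct --- this is exactly the general-position/split-decomposition statement of \cite{Ter2005GC}, and for a thin module it follows from the split decomposition of the associated Leonard pair --- but since it is the entire content of the $e>0$ half of the lemma, your proposal is a conditional proof until that ingredient is either proved or cited precisely. In effect you have reconstructed the architecture of Terwilliger's proof of \cite[Theorem~7.5]{Ter2024arxiv} while black-boxing its engine; that is defensible here, since the paper black-boxes the entire statement, but the write-up should name \cite{Ter2005GC} (or the Leonard-pair split decomposition in the thin case) as the source of the missing lemma rather than leaving it as an expectation.
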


\begin{prop}{\rm{\cite[Proposition~6.11]{Ter2024arxiv}}}
\label{endpoint}
    Let $W,W'$ denote irreducible $T$-submodules of $\mathcal{N}$. Then the following are equivalent:
    \begin{enumerate}[label=(\roman*)]
        \item the endpoints of $W,W'$ are equal;
        \item the $T$-modules $W,W'$ are isomorphic.
    \end{enumerate}
\end{prop}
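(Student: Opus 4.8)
The plan is to prove the equivalence by establishing both implications, with the hard direction being (i)$\Rightarrow$(ii). For the easy direction (ii)$\Rightarrow$(i): if $W, W'$ are isomorphic $T$-modules then the endpoint is an isomorphism invariant (it is defined purely in terms of the $E_i^*$-action, which is preserved under $T$-module isomorphism), so their endpoints coincide. This requires no appeal to the nucleus structure and is essentially a remark.

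For the direction (i)$\Rightarrow$(ii), suppose $W, W'$ are irreducible $T$-submodules of $\mathcal{N}$ with common endpoint $r$. First I would record the structural constraints that membership in $\mathcal{N}$ imposes: by Lemma~\ref{defN} together with the displacement-zero condition (equation (\ref{r=t})), any irreducible $T$-submodule $W$ of $\mathcal{N}$ has dual endpoint $t = r$ and diameter $d = D - 2r$, hence it is supported on $E_r^*V + E_{r+1}^*V + \cdots + E_{D-r}^*V$ and on $E_rV + E_{r+1}V + \cdots + E_{D-r}V$. The key point is that a displacement-zero module "fills out" as much of both filtrations as the endpoint allows, so $W$ and $W'$ have the same endpoint $r$, the same dual endpoint $r$, and the same diameter $D - 2r$. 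Now I would invoke the fact (standard for $Q$-polynomial distance-regular graphs, traceable to the classification of thin irreducible modules; cf. the references to \cite{Ter1992JOACI, Ter1993JOACIII} and the parametrization in \cite{LIW2020LAA}) that an irreducible $T$-module with displacement zero is automatically thin, and that for a thin irreducible $T$-module the isomorphism class is determined by the endpoint $r$ together with the intersection numbers $\{a_i(W), b_i(W), c_i(W)\}$; the remaining task is to show these intersection numbers are forced by $r$ alone once displacement is zero.

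To pin down the intersection numbers I would argue as follows. By Lemma~\ref{acomparison} we have $a_i(W) \le a_{r+i}$ for $0 \le i \le d$, and a dual inequality holds relative to the dual intersection numbers; the displacement-zero condition should force these inequalities to be equalities (this is the mechanism: the module is "as large as possible" in both the primary and dual directions, so no slack remains). Concretely, I would use Lemma~\ref{EAE}, which gives $E_{r+i}^*AE_{r+i}^* = a_i(W)E_{r+i}^*$ on $W$, combined with the fact that on $\mathcal{N}$ the subspaces $E_{r+i}^*W$ exhaust the relevant portion of $E_{r+i}^*V \cap \mathcal{N}$, to conclude $a_i(W) = a_i(W')$. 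A parallel argument using the $Q$-polynomial (dual) structure fixes the $c_i(W)$, and then $b_i(W) = \theta_t - a_i(W) - c_i(W) = \theta_r - a_i(W) - c_i(W)$ is determined since $t = r$ for both. Hence $W \cong W'$ as $T$-modules.

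The main obstacle is the claim that displacement zero forces the intersection-number inequalities of Lemma~\ref{acomparison} (and their duals) to be equalities, i.e. that a displacement-zero module has its intersection numbers rigidly determined by the endpoint. I would handle this by a counting/dimension argument: fix the endpoint $r$ and consider the isotypic component of $\mathcal{N}$; using the direct-sum decomposition $\mathcal{N} = \sum_i \mathcal{N}_i$ of the preceding lemma together with $\mathcal{N}_r = (E_0^*V + \cdots + E_r^*V) \cap (E_0V + \cdots + E_{D-r}V)$, one sees that $\mathcal{N}_r$ is exactly the sum of the "bottom pieces" $E_r^*W$ over all irreducible submodules $W$ of endpoint $r$, and the Terwilliger algebra acts on this space in a way that propagates a single orbit of intersection-number data. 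Alternatively — and this is probably cleaner for the write-up — I would defer the detailed verification of the intersection numbers to the explicit realization via the algebra $\mathcal{H}$ developed later in the paper, where displacement-zero irreducible $T$-modules correspond to alpha-dominant irreducible $\mathcal{H}$-modules of type $(\alpha, 0, 0)$ whose isomorphism class is governed by the single parameter $\alpha$ (equivalently by the endpoint $r$), making the correspondence endpoint $\leftrightarrow$ isomorphism class transparent. If the proposition is proved at this point in the text purely abstractly, I would cite \cite[Proposition~6.11]{Ter2024arxiv} directly and present the above as the conceptual skeleton; if a self-contained argument is wanted, the dimension-count on $\mathcal{N}_r$ is the route I would pursue.
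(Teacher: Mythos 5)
The paper does not prove this proposition at all: it is imported verbatim from \cite[Proposition~6.11]{Ter2024arxiv}, so the ``hedge'' at the end of your proposal (cite the reference and present the rest as a conceptual skeleton) is in fact exactly what the authors do. Your easy direction (ii)$\Rightarrow$(i) is fine, and your reading of the displacement-zero constraints ($t=r$, $d=D-2r$, hence both modules share $r,t,d$) is correct.

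However, as a self-contained argument your (i)$\Rightarrow$(ii) direction has a genuine gap at precisely the point that carries all the content. Reducing the problem to ``the intersection numbers of a displacement-zero module are forced by $r$ alone'' is the right reduction, but your two proposed mechanisms do not deliver it. First, Lemma~\ref{acomparison} gives only the one-sided bound $a_i(W)\le a_{r+i}$, and nothing in the displacement-zero hypothesis obviously turns this into an equality; indeed equality generally fails (in the Grassmann case one can check from \eqref{aW}--\eqref{cW} that $a_i(W)\ne a_{r+i}$ for $r>0$), so ``no slack remains'' is not the mechanism. Second, the dimension count on $\mathcal{N}_r$ tells you how many endpoint-$r$ summands there are, but says nothing about whether two such summands carry the same $a_i,b_i,c_i$; Lemma~\ref{EAE} computes $a_i(W)$ on each $W$ separately and gives no comparison between $W$ and $W'$. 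What is actually needed is an argument that the scalar $E^*_{r+i}AE^*_{r+i}$ acts by the \emph{same} constant on the $E^*_{r+i}$-components of all displacement-zero modules with endpoint $r$ (or some equivalent rigidity statement), and that is the substance of Terwilliger's Proposition~6.11, which you have not supplied. Your alternative route through the algebra $\mathcal{H}$ (alpha-dominant modules of type $(\alpha,0,0)$ are unique up to isomorphism by Lemma~\ref{unique}, and correspond to endpoint-$\alpha$ submodules of $\mathcal{N}$ by Lemmas~\ref{DominantEquiv} and \ref{rdominant}) does yield a complete proof, but only for $J_q(N,D)$ with $N>2D$; the proposition as stated in Section~3 concerns an arbitrary $Q$-polynomial distance-regular graph, for which that machinery is unavailable.
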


In view of Proposition \ref{endpoint}, we remark that the irreducible $T$-submodules of $\mathcal{N}$ are uniquely determined, up to isomorphism of $T$-modules, by their endpoints.

By \cite[Lemma~5.2]{Ter2024arxiv}, the nucleus $\mathcal{N}$ is an orthogonal direct sum of irreducible $T$-modules. Write
\begin{equation}
\label{orthogonal}
    \mathcal{N}=\sum_{W}W\qquad \qquad (\text{orthogonal direct sum}).
\end{equation}

\begin{definition}
\label{multrdef}
    For $0\leq r\leq \frac{D}{2}$, let $\mult_r$ denote the number of summands $W$ in (\ref{orthogonal}) that have endpoint $r$.
\end{definition}

\begin{prop}{\rm{\cite[Proposition~7.11]{Ter2024arxiv}}}
\label{dimension}
    For $0\leq i\leq \frac{D}{2}$ the following subspaces have dimension $\sum_{r=0}^{i}\mult_r$:
    \begin{equation*}
        E_i\mathcal{N}, \qquad \qquad E_{D-i}\mathcal{N}, \qquad \qquad E_i^{*}\mathcal{N}, \qquad \qquad E_{D-i}^{*}\mathcal{N}, \qquad \qquad \mathcal{N}_i,\qquad \qquad \mathcal{N}_{D-i}.
    \end{equation*}
\end{prop}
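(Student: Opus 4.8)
The plan is to prove Proposition \ref{dimension} by combining the orthogonal decomposition (\ref{orthogonal}) of $\mathcal{N}$ with the structural facts about irreducible $T$-submodules of $\mathcal{N}$ recorded in Lemma \ref{defN} (equation (\ref{r=t})), Proposition \ref{endpoint}, and the definition of $\mathcal{N}_i$. The key observation is that for an irreducible $T$-module $W$ of displacement $0$ with endpoint $r$, equation (\ref{r=t}) gives $t=r$ and $d=D-2r$, so $W$ ``lives between'' levels $r$ and $D-r$ in both the $E^*$-grading and the $E$-grading. Concretely, $E_j^*W\neq 0$ exactly for $r\leq j\leq D-r$, and $E_jW\neq 0$ exactly for $r\leq j\leq D-r$; moreover, since the nucleus is thin (each such $W$ is thin, by the structure theory), $\dim E_j^*W=\dim E_jW = 1$ for each such $j$ and $0$ otherwise. (If thinness of nucleus summands is not yet available in the excerpt, I would instead argue via Proposition~6.11-type reasoning directly on dimensions; but I expect thinness to be the intended route.)

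Granting this, the computation of $\dim E_i^*\mathcal{N}$ is immediate: by (\ref{orthogonal}) and orthogonality of the decomposition, $E_i^*\mathcal{N}=\sum_W E_i^*W$ with the sum direct (apply $E_i^*$ to an orthogonal direct sum, using that $E_i^*$ is an orthogonal projection), so $\dim E_i^*\mathcal{N}=\sum_W \dim E_i^*W$. By the previous paragraph, $\dim E_i^*W=1$ precisely when $W$ has endpoint $r\leq i$ and $r\leq D-i$; for $0\leq i\leq D/2$ the second condition is automatic, so the summands contributing are exactly those with endpoint $r\in\{0,1,\dots,i\}$, and there are $\sum_{r=0}^i\mult_r$ of them by Definition \ref{multrdef}. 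The same argument verbatim, with $E_i^*$ replaced by $E_{D-i}^*$, $E_i$, or $E_{D-i}$ (using that $W$ meets levels $r$ through $D-r$ symmetrically in both gradings), gives the same count for $E_{D-i}^*\mathcal{N}$, $E_i\mathcal{N}$, $E_{D-i}\mathcal{N}$.

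For the two subspaces $\mathcal{N}_i$ and $\mathcal{N}_{D-i}$, I would use Lemma \ref{defN}: $\mathcal{N}=\bigoplus_{j=0}^D\mathcal{N}_j$ with $\mathcal{N}_j=(\sum_{h\leq j}E_h^*V)\cap(\sum_{h\leq D-j}E_hV)$. The plan is to show that a displacement-$0$ irreducible $T$-module $W$ with endpoint $r$ contributes dimension $1$ to $\mathcal{N}_j$ exactly when $r\leq j\leq D-r$, and $0$ otherwise; this is consistent because summing over $j$ recovers $\dim W=d+1=D-2r+1=|\{j: r\leq j\leq D-r\}|$. To see the claim, note $W\cap(\sum_{h\leq j}E_h^*V)=\sum_{i=0}^{\min(j-r,d)}E_{r+i}^*W$ has dimension $\min(j-r,d)+1$ when $j\geq r$ (and $0$ when $j<r$), while $W\cap(\sum_{h\leq D-j}E_hV)$ has dimension $\min(D-j-r,d)+1$ when $j\leq D-r$; intersecting these two ``staircase'' subspaces of the thin module $W$ (whose $E^*$- and $E$-flags are in ``general position'' in the sense dictated by the $Q$-polynomial structure — this is exactly the content that makes $\mathcal{N}_j\cap W$ one-dimensional) leaves a $1$-dimensional space precisely on the overlap $r\leq j\leq D-r$. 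Then $\dim\mathcal{N}_i=\sum_W\dim(\mathcal{N}_i\cap W)=\#\{W:\text{endpoint }r\leq i\}=\sum_{r=0}^i\mult_r$ for $0\leq i\leq D/2$, and identically for $\mathcal{N}_{D-i}$ by the symmetry $r\leq D-i\iff r\leq D-i$, which holds automatically since $r\leq D/2\leq D-i$.

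The main obstacle is the claim that $\dim(\mathcal{N}_j\cap W)\leq 1$, equivalently that the intersection of the two staircase flags inside a thin displacement-$0$ module $W$ is as small as possible. This requires knowing that within such a $W$ the eigenspace filtrations of $A$ and $A^*$ are suitably transverse; for $Q$-polynomial graphs this follows from the standard fact (e.g. \cite{Ter1993JOACIII}, or the ``split decomposition'' in \cite{Ter2024arxiv}) that $E_i^*W$ and $\sum_{h\leq j}E_hW$ interact via a known nonsingular transition, but pinning down the exact reference and making the transversality argument precise is where the real work lies. Everything else — the passage from $\mathcal{N}=\bigoplus_W W$ to the dimension counts, and the bookkeeping with endpoints and $\mult_r$ — is routine once that lemma is in hand.
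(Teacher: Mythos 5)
First, a point of comparison: the paper does not prove this proposition at all --- it is imported verbatim as \cite[Proposition~7.11]{Ter2024arxiv} --- so there is no in-paper proof to measure yours against. Judged on its own terms, your outline follows what is essentially Terwilliger's route (displacement plus split decomposition), and the easier two-thirds of it is sound: since each summand $W$ in (\ref{orthogonal}) is a $T$-module, $E_i^*\mathcal{N}=\bigoplus_W E_i^*W$ and $E_i\mathcal{N}=\bigoplus_W E_iW$, and once one knows $\dim E_j^*W=\dim E_jW=1$ exactly for $r\leq j\leq D-r$, the count $\sum_{r=0}^{i}\mult_r$ for $E_i\mathcal{N}$, $E_{D-i}\mathcal{N}$, $E_i^*\mathcal{N}$, $E_{D-i}^*\mathcal{N}$ follows. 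Two caveats: you need thinness \emph{and dual thinness} of the displacement-zero modules (for $J_q(N,D)$ thinness is \cite[Lemma~2.13]{LIW2020LAA}, but the cited proposition is about a general $Q$-polynomial graph, where this is itself part of what \cite{Ter2024arxiv} establishes for displacement-zero modules), together with the dual analogue of Lemma \ref{thin}(i) for the $E_j$-grading, which this paper never states.

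The genuine gap is exactly where you locate it, and it is not cosmetic. Granting Lemma \ref{defN} so that $\mathcal{N}_j\subseteq\mathcal{N}$, the reduction $\mathcal{N}_j=\bigoplus_W\bigl(\sum_{h\leq j}E_h^*W\bigr)\cap\bigl(\sum_{h\leq D-j}E_hW\bigr)$ is legitimate (both flag members decompose along the $W$'s because each $W$ is $T$-invariant), and a dimension count shows each summand with $r\leq j\leq D-r$ is at least one-dimensional, since the two subspaces of $W$ being intersected have dimensions $(j-r+1)+(D-j-r+1)=\dim W+1$. But the upper bound $\dim\leq 1$ is equivalent to the transversality of the two filtrations inside $W$, i.e., to the split decomposition of the irreducible module, and that is the actual content of \cite[Proposition~7.11]{Ter2024arxiv} (resting on \cite{Ter2005GC}). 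As written, your argument therefore establishes only $\dim\mathcal{N}_i\geq\sum_{r=0}^{i}\mult_r$. Since the statement is explicitly quoted from the literature, the clean options are to cite the split decomposition for this one step or to cite the proposition wholesale, as the paper does.
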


\section{Projective geometry $P$ and the Grassmann graph $J_q(N,D)$}
In this section we define the projective geometry $P$ and the Grassmann graph $J_q(N,D)$. We recall several facts related to $P$ and the graph $J_q(N,D)$.

\begin{definition}
    Let $\mathbb{F}_q$ denote a finite field with $q$ elements. Let $N\geq 1$. Let $\mathcal{V}$ denote an $N$-dimensional vector space over $\mathbb{F}_q$. Let the set $P=P_q(N)$ consist of the subspaces of $\mathcal{V}$. The set $P$, together with the inclusion partial order, is a poset called a \emph{projective geometry}. 
\end{definition}

For $0\leq \ell\leq N$, let the set $P_{\ell}$ consist of the $\ell$-dimensional subspaces of $\mathcal{V}$. We have a partition
    \begin{equation}
        \label{partition1}
        P=\bigcup_{\ell=0}^{N}P_{\ell}.
    \end{equation} 
For $u,v\in P$, we say that \emph{$v$ covers $u$} whenever $u\subseteq v$ and $\dim v=\dim u+1$.

Throughout this paper, we use the notation
\begin{equation*}
[\ell] = \frac{q^{\ell}-1}{q-1} \qquad \qquad (\ell \in \mathbb{Z}).
\end{equation*}
For integers $0 \leq n \leq \ell$, we denote the $q$-binomial coefficient by
\begin{equation}
\label{qbinomdef}
\binom{\ell}{n}_{q} = \frac{[\ell][\ell-1]\cdots[\ell-n+1]}{[n][n-1]\cdots[2][1]}.
\end{equation}
For convenience, we set $\binom{\ell}{n}_q = 0$ whenever $\ell < n$ or $n < 0$.

\begin{lemma}{\rm{\cite[Theorem~9.3.2]{BCN}}}
\label{qbinom}
For $0 \leq n \leq \ell$ and $u \in P_\ell$, the number of $n$-dimensional subspaces contained in $u$ is equal to $\binom{\ell}{n}_q$.
\end{lemma}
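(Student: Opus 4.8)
The plan is to give the standard double-counting argument, realizing $\binom{\ell}{n}_q$ first as a count of ordered bases and then dividing by the overcounting factor. Fix $u \in P_\ell$, so $u$ is an $\ell$-dimensional vector space over $\mathbb{F}_q$. First I would count the number of linearly independent $n$-tuples $(v_1, v_2, \dots, v_n)$ of vectors in $u$: there are $q^\ell - 1$ choices for $v_1$ (any nonzero vector), then $q^\ell - q$ choices for $v_2$ (anything outside $\langle v_1 \rangle$), and in general $q^\ell - q^{i-1}$ choices for $v_i$ once $v_1, \dots, v_{i-1}$ are fixed, since we must avoid the $(i-1)$-dimensional span of the earlier vectors. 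Hence the number of such ordered linearly independent $n$-tuples is $\prod_{i=0}^{n-1}(q^\ell - q^i)$.

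Next I would observe that each $n$-dimensional subspace $w \subseteq u$ arises from exactly $\prod_{i=0}^{n-1}(q^n - q^i)$ of these $n$-tuples, namely the ordered bases of $w$ itself (apply the same count with $\ell$ replaced by $n$, since an ordered basis of $w$ is precisely a linearly independent $n$-tuple of vectors of $w$). Therefore the number of $n$-dimensional subspaces of $u$ equals
\begin{equation*}
\frac{\prod_{i=0}^{n-1}(q^\ell - q^i)}{\prod_{i=0}^{n-1}(q^n - q^i)}.
\end{equation*}

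Finally I would simplify: factoring $q^i$ out of each term in numerator and denominator gives a common factor $\prod_{i=0}^{n-1} q^i$ in both, which cancels, leaving
\begin{equation*}
\frac{\prod_{i=0}^{n-1}(q^{\ell-i} - 1)}{\prod_{i=0}^{n-1}(q^{n-i} - 1)} = \frac{(q^\ell - 1)(q^{\ell-1}-1)\cdots(q^{\ell-n+1}-1)}{(q^n-1)(q^{n-1}-1)\cdots(q-1)},
\end{equation*}
and dividing numerator and denominator by $(q-1)^n$ converts each factor $q^m - 1$ into $[m]$, yielding exactly $\binom{\ell}{n}_q$ as defined in~\eqref{qbinomdef}. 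There is no real obstacle here; the only point requiring a little care is the edge behavior when $n = 0$ (empty products equal $1$, and there is exactly one $0$-dimensional subspace, consistent with $\binom{\ell}{0}_q = 1$), and confirming that the stated restriction $0 \le n \le \ell$ makes every factor in the denominator nonzero so the division is legitimate. Since this is a classical fact, I would in practice simply cite \cite[Theorem~9.3.2]{BCN} as the statement already does, but the argument above is the self-contained version.
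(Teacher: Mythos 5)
Your argument is correct: the count of ordered linearly independent $n$-tuples, the overcounting factor given by the number of ordered bases of an $n$-dimensional space, and the simplification to $\binom{\ell}{n}_q$ via cancelling $\prod_{i=0}^{n-1}q^i$ and dividing by $(q-1)^n$ all check out, including the $n=0$ edge case. The paper itself gives no proof here --- Lemma~\ref{qbinom} is stated as a citation of \cite[Theorem~9.3.2]{BCN} --- so your standard double-counting derivation is a valid self-contained substitute for the reference.
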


\begin{corollary}
\label{cover}
    For $u\in P$, the number of subspaces covered by $u$ is equal to $[\ell]$, where $\ell=\dim u$.
\end{corollary}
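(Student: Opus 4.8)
The plan is to reduce the statement directly to Lemma~\ref{qbinom}. By the definition of the covering relation in $P$, a subspace $v$ is covered by $u$ precisely when $v\subseteq u$ and $\dim v=\dim u-1=\ell-1$. Hence the subspaces covered by $u$ are exactly the $(\ell-1)$-dimensional subspaces contained in $u$, and counting them is an instance of Lemma~\ref{qbinom} with $n=\ell-1$.

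First I would invoke Lemma~\ref{qbinom}: since $0\le \ell-1\le \ell$ (the case $\ell=0$ being handled by the convention $\binom{\ell}{n}_q=0$ for $n<0$, which correctly gives $0$ subspaces covered by the zero subspace), the number of subspaces covered by $u$ equals $\binom{\ell}{\ell-1}_q$. Then I would simplify this $q$-binomial coefficient using the defining formula \eqref{qbinomdef}: the numerator $[\ell][\ell-1]\cdots[2]$ and the denominator $[\ell-1][\ell-2]\cdots[2][1]$ share all factors except $[\ell]$ in the numerator and $[1]$ in the denominator, so $\binom{\ell}{\ell-1}_q=[\ell]/[1]$. Since $[1]=\tfrac{q-1}{q-1}=1$, this equals $[\ell]$, completing the argument.

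There is essentially no obstacle here; the result is a routine specialization of the $q$-binomial count, and the only thing to check carefully is the trivial edge case $\ell=0$, which is already consistent with the stated conventions (namely $[0]=0$ and $\binom{0}{-1}_q=0$).
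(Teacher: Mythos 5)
Your proposal is correct and matches the paper's proof exactly: both set $n=\ell-1$ in Lemma~\ref{qbinom} and simplify the resulting $q$-binomial coefficient via \eqref{qbinomdef} to obtain $[\ell]$. The extra care you take with the edge case $\ell=0$ is consistent with the stated conventions and does not change the argument.
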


\begin{proof}
    Set $n=\ell-1$ in Lemma \ref{qbinom} and use (\ref{qbinomdef}).
\end{proof}

Next we refine the partition of $P$ in (\ref{partition1}). Assume that $N > D \geq 1$, and pick $x \in P_D$. 
For $0 \leq i\leq D$ and $0 \leq j\leq N-D$ define
    \begin{equation*}
        P_{i,j}=\bigl\{u\in P\mid \dim (u\cap x)=i, \dim u=i+j\bigr\}.
    \end{equation*}

    For integers $\ell,n$ we define $P_{\ell,n}$ to be empty unless $0\leq \ell\leq D$ and $0\leq n\leq N-D$.

    We have a partition
    \begin{equation}
    \label{partition2}
        P=\bigcup_{i=0}^{D}\bigcup_{j=0}^{N-D}P_{i,j}.
    \end{equation}

    In the diagram below, we illustrate the set $P$ and the subsets $P_{i,j}$ $(0\leq i\leq D,\; 0\leq j\leq N-D)$.
    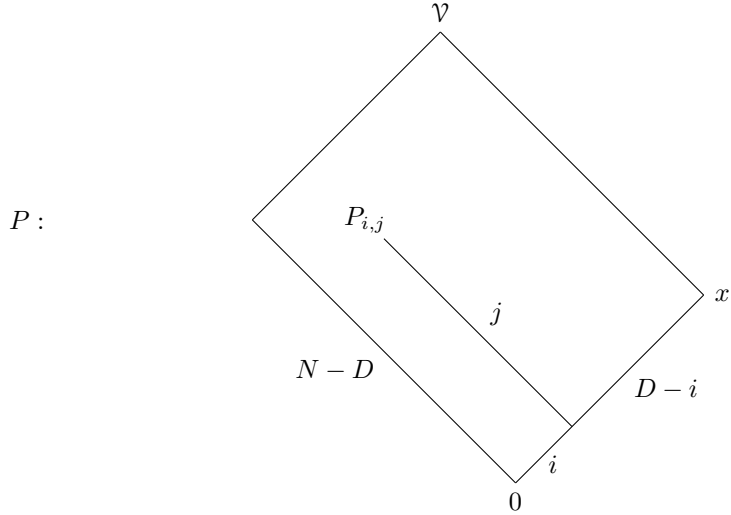
\begin{figure}[!ht]
\centering
{%
\begin{circuitikz}
\tikzstyle{every node}=[font=\normalsize]
\draw [short] (2.5,-3.5) -- (-1,0);
\draw [short] (-1,0) -- (1.5,2.5);
\draw [short] (1.5,2.5) -- (5,-1);
\draw [short] (5,-1) -- (2.5,-3.5);
\draw [short] (0.75,-0.25) -- (3.25,-2.75);
\node [font=\normalsize] at (1.5,2.75) {$\mathcal{V}$};
\node [font=\normalsize] at (5.25,-1) {$x$};
\node [font=\normalsize] at (2.5,-3.75) {$0$};
\node [font=\normalsize] at (3,-3.25) {$i$};
\node [font=\normalsize] at (2.25,-1.25) {$j$};
\node [font=\normalsize] at (0.5,0) {$P_{i,j}$};
\node [font=\normalsize] at (4.5,-2.25) {$D-i$};
\node [font=\normalsize] at (0.1,-2) {$N-D$};
\node [font=\normalsize] at (-4,0) {$P:$};
\node [font=\normalsize] at (8,0) {$\;$};
\end{circuitikz}
}%
\caption{The projective geometry $P$ and the location of $P_{i,j}$.}
\label{projective}
\end{figure}

Below \eqref{partition1}, we described the covering relation on $P$. Next, we give a refinement of the covering relation.
    
    \begin{lemma}{\rm{\cite[Lemma~2.3]{Seong2024arxiv}}}
    \label{coverlem}
        Let $u,v\in P$ such that $v$ covers $u$. Write
        \begin{equation*}
            u\in P_{i,j}, \qquad \qquad v\in P_{\ell,n}.
        \end{equation*}
        Then either (i) $\ell=i+1$ and $n=j$, or (ii) $\ell=i$ and $n=j+1$.
    \end{lemma}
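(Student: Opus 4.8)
The plan is a short dimension count. Since $v$ covers $u$, we have $u \subseteq v$ and $\dim v = \dim u + 1$. Unwinding the definitions, $u \in P_{i,j}$ means $\dim(u \cap x) = i$ and $\dim u = i+j$, while $v \in P_{\ell,n}$ means $\dim(v \cap x) = \ell$ and $\dim v = \ell + n$. Subtracting the two dimension relations gives $\ell + n = i + j + 1$, so once the value of $\ell$ is known, the value of $n$ is forced. It therefore suffices to show that $\ell \in \{i,\, i+1\}$: the case $\ell = i+1$ then yields $n = j$, which is (i), and the case $\ell = i$ yields $n = j+1$, which is (ii).

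To bound $\ell$ from below: since $u \subseteq v$, we have $u \cap x \subseteq v \cap x$, hence $i = \dim(u \cap x) \leq \dim(v \cap x) = \ell$. To bound $\ell$ from above: the subspaces $u$ and $v \cap x$ both lie in $v$, so $u + (v \cap x) \subseteq v$ and consequently $\dim\bigl(u + (v \cap x)\bigr) \leq \dim v = \dim u + 1$. On the other hand, the modular (inclusion--exclusion) identity for subspaces gives $\dim\bigl(u + (v \cap x)\bigr) = \dim u + \dim(v \cap x) - \dim\bigl(u \cap (v \cap x)\bigr)$, and since $u \subseteq v$ we have $u \cap (v \cap x) = u \cap x$. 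Combining these, $\dim u + \ell - i \leq \dim u + 1$, so $\ell \leq i+1$.

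I do not anticipate any real obstacle here: the argument is purely linear algebra over $\mathbb{F}_q$, and nothing beyond the subspace dimension formula is required. The only point that needs a moment's care is the identity $u \cap (v \cap x) = u \cap x$, which holds because $u \subseteq v$. Putting the two bounds on $\ell$ together with the relation $\ell + n = i + j + 1$ produces exactly the dichotomy asserted in the statement, completing the proof.
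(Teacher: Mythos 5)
Your argument is correct and complete. Note that the paper does not supply its own proof of this lemma; it simply cites \cite[Lemma~2.3]{Seong2024arxiv}, so there is no in-paper argument to compare against. Your dimension count is exactly the natural self-contained verification: the relation $\ell + n = i + j + 1$ follows from $\dim v = \dim u + 1$, the lower bound $i \leq \ell$ from $u \cap x \subseteq v \cap x$, and the upper bound $\ell \leq i+1$ from the modular identity applied to $u$ and $v \cap x$ inside $v$, using $u \cap (v \cap x) = u \cap x$. The two bounds force $\ell \in \{i, i+1\}$ and the displayed relation then forces $n$, giving precisely cases (i) and (ii). No gaps.
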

        
    \begin{definition}
    \label{slashcover}
        Referring to Lemma \ref{coverlem},  we say that \emph{$v$ $\slash$-covers $u$} whenever (i) holds, and \emph{$v$ $\backslash$-covers $u$} whenever (ii) holds.
    \end{definition}

    We illustrate Definition \ref{slashcover} using the diagrams below.
    \begin{figure}[!ht]
\centering
{%
\begin{circuitikz}
\tikzstyle{every node}=[font=\normalsize]
\draw [short] (2.5,-3.5) -- (-1,0);
\draw [short] (-1,0) -- (1.5,2.5);
\draw [short] (1.5,2.5) -- (5,-1);
\draw [short] (5,-1) -- (2.5,-3.5);
\draw [short] (0.9,-0.1) -- (1.1,0.1);
\node [font=\normalsize] at (1.5,2.75) {$\mathcal{V}$};
\node [font=\normalsize] at (5.25,-1) {$x$};
\node [font=\normalsize] at (2.5,-3.75) {$0$};
\node [font=\normalsize] at (1.25,0.25) {$v$};
\node [font=\normalsize] at (0.75,-0.25) {$u$};
\node [font=\normalsize] at (4,-2.5) {$D$};
\node [font=\normalsize] at (0.1,-2) {$N-D$};
\node [font=\normalsize] at (2,-4.75) {Figure 2.1: $v$ $\slash$-covers $u$.};
\end{circuitikz}
\hspace{0.25cm}
\begin{circuitikz}
\draw [short] (9.5,-3.5) -- (6,0);
\draw [short] (6,0) -- (8.5,2.5);
\draw [short] (8.5,2.5) -- (12,-1);
\draw [short] (12,-1) -- (9.5,-3.5);
\draw [short] (8.4,0.1) -- (8.6,-0.1);

\node [font=\normalsize] at (8.5,2.75) {$\mathcal{V}$};
\node [font=\normalsize] at (12.25,-1) {$x$};
\node [font=\normalsize] at (9.5,-3.75) {$0$};
\node [font=\normalsize] at (8.25,0.25) {$v$};
\node [font=\normalsize] at (8.75,-0.25) {$u$};
\node [font=\normalsize] at (11,-2.5) {$D$};
\node [font=\normalsize] at (7.1,-2) {$N-D$};
\node [font=\normalsize] at (9,-4.75) {Figure 2.2: $v$ $\backslash$-covers $u$.};
\end{circuitikz}
}%
\end{figure}
\renewcommand{\thefigure}{\arabic{figure}}
\newpage
\setcounter{figure}{2}

Next we define the Grassmann graph $J_q(N,D)$.

\begin{definition}
    The Grassmann graph $J_q(N,D)$ has vertex set $P_D$. Two vertices $y,z$ form an edge whenever $y\cap z\in P_{D-1}$.
\end{definition}

For the rest of this paper, we set $\Gamma=J_q(N,D)$. By \cite[p.~268]{BCN}, the graph $\Gamma$ is isomorphic to $J_q(N,N-D)$. Without loss, we assume that $N\geq 2D$. The case $N=2D$ is special, and we discuss it briefly in Section \ref{remark}. Unless otherwise stated, we assume $N>2D$. Under this assumption, the diameter of $\Gamma$ is $D$ (see \cite[Theorem~9.3.3]{BCN}).

By \cite[Theorem~9.3.2]{BCN}, the valency of $\Gamma$ is 
\begin{equation}
\label{kappa}
  k=q[D][N-D].  
\end{equation}

By \cite[Theorem~9.3.3]{BCN}, the intersection numbers of $\Gamma$ are
\begin{equation}
    \label{sizebc}
    b_i=q^{2i+1}[D-i][N-D-i],\qquad \qquad c_i=[i]^2\qquad \qquad (0\leq i\leq D).
\end{equation}

By \cite[Theorem~9.3.3]{BCN}, the eigenvalues of $\Gamma$ are
 \begin{equation}
 \label{eigenvalues}
     \theta_i=q[D][N-D]-[i][N-i+1] \qquad \qquad (0\leq i\leq D).
 \end{equation}

 By \cite[Section~3]{Lee2020LAA}, the dual eigenvalues of $\Gamma$ are
\begin{equation}
\label{dual}
    \theta_{i}^{*}=-\frac{q[N-1]\bigl([D]+[N-D]\bigr)}{(q-1)[D][N-D]}+\frac{q[N][N-1]}{(q-1)[D][N-D]}q^{-i}\qquad \qquad (0\leq i\leq D).
\end{equation}

\section{The algebra $\mathcal{H}$}
Recall the projective geometry $P$. Let $\text{Mat}_{P}(\mathbb{C})$ denote the $\mathbb{C}$-algebra consisting of the matrices with rows and columns indexed by $P$ and all entries in $\mathbb{C}$. We continue to fix $x\in P_D$. In this section we recall from \cite[Section~7]{Watanabe} the subalgebra $\mathcal{H}=\mathcal{H}(x)$ of $\text{Mat}_{P}(\mathbb{C})$.

\begin{definition}
        For $0 \leq i\leq D$ and $0 \leq j\leq N-D$, define the diagonal matrix $E^{*}_{i,j}\in \text{Mat}_P(\mathbb{C})$ as follows. For $u\in P$, the $(u,u)$-entry of $E^{*}_{i,j}$ is
    \begin{equation*}
        \bigl(E^{*}_{i,j}\bigr)_{u,u}=\begin{cases}
            1&\text{if }u\in P_{i,j},\\
            0&\text{if $u\notin P_{i,j}$}.
        \end{cases}
    \end{equation*}  
    For notational convenience, for integers $\ell,n$ we define $E^{*}_{\ell,n}=0$ unless $0 \leq \ell\leq D$ and $0 \leq n\leq N-D$.
    \end{definition}

    \begin{definition}{\cite[Definition~6.1]{Watanabe}}
        The matrices 
        \begin{equation*}
            E^{*}_{i,j}\qquad \qquad 0\leq i\leq D\qquad \qquad 0\leq j\leq N-D
        \end{equation*} 
        form a basis for a commutative subalgebra of $\text{Mat}_{P}(\mathbb{C})$. Denote this subalgebra by $\mathcal{K}$.
    \end{definition}

    Next we find a generating set for $\mathcal{K}$. For the rest of the paper, $q^{1/2}$ denotes the positive square root of $q$.

\begin{definition}
    Define diagonal matrices $K_1,K_2\in \text{Mat}_P(\mathbb{C})$ as follows. For $u\in P$ their $(u,u)$-entries are
    \begin{equation*}
        (K_1)_{u,u}=q^{\frac{D}{2}-i},\qquad \qquad (K_2)_{u,u}=q^{\frac{N-D}{2}-j},
    \end{equation*}
    where $u\in P_{i,j}$.

    Note that $K_1,K_2$ are invertible. By \cite[Proposition~6.3]{Watanabe}, the algebra $\mathcal{K}$ is generated by $K_1^{\pm 1},K_2^{\pm 1}$.
\end{definition}

\begin{definition}
    Define matrices $L_1,L_2,R_1,R_2\in \text{Mat}_P(\mathbb{C})$ as follows. For $u,v\in P$ their $(u,v)$-entries are
    \begin{align*}
        \bigl(L_1\bigr)_{u,v}&=\begin{cases}
            1&\text{if }v\text{ $\slash$-covers }u,\\
            0&\text{if }v\text{ does not $\slash$-cover }u,
        \end{cases}\\
        \bigl(L_2\bigr)_{u,v}&=\begin{cases}
            1&\text{if }v\text{ $\backslash$-covers }u,\\
            0&\text{if }v\text{ does not $\backslash$-cover }u,
        \end{cases}\\
        \bigl(R_1\bigr)_{u,v}&=\begin{cases}
            1&\text{if }u\text{ $\slash$-covers }v,\\
            0&\text{if }u\text{ does not $\slash$-cover }v,
        \end{cases}\\
        \bigl(R_2\bigr)_{u,v}&=\begin{cases}
            1&\text{if }u\text{ $\backslash$-covers }v,\\
            0&\text{if }u\text{ does not $\backslash$-cover }v.
        \end{cases}
    \end{align*}
    Note that $R_1=L_1^{\top}$ and $R_2=L_2^{\top}$. 
\end{definition}

\begin{definition}
    Let $\mathcal{H}=\mathcal{H}(x)$ denote the subalgebra of $\text{Mat}_{P}(\mathbb{C})$ generated by $L_1,L_2,R_1,R_2, K_1^{\pm1}, K_2^{\pm 1}$.
\end{definition}

Let $\Psi$ denote the $\mathbb{C}$-vector space consisting of the column vectors with rows indexed by $P$ and all entries in $\mathbb{C}$. The algebra $\operatorname{Mat}_{P}(\mathbb{C})$ acts on $\Psi$ by left multiplication. The $\operatorname{Mat}_{P}(\mathbb{C})$-module $\Psi$ is called \emph{standard}. By construction, the vector space $\Psi$ is an $\mathcal{H}$-module. Let $\Omega$ denote an $\mathcal{H}$-submodule of $\Psi$. We say that $\Omega$ is \emph{irreducible} whenever $\Omega$ does not contain an $\mathcal{H}$-submodule besides $0$ or $\Omega$. Since $\mathcal{H}$ is closed under the conjugate-transpose map, every $\mathcal{H}$-module decomposes into a direct sum of irreducible $\mathcal{H}$-modules. In particular, the standard module $\Psi$ is a direct sum of irreducible $\mathcal{H}$-modules.

\begin{lemma}{\rm{\cite[Lemma~2.13]{Seong2024arxiv}}}
    \label{typelem}
        Let $\Omega$ denote an irreducible $\mathcal{H}$-module. Then there exist integers $\alpha,\beta,\rho$ such that the following (i), (ii) hold:
        \begin{enumerate}[label=(\roman*)]
            \item 
            \begin{equation}
            \label{abcondition}
                0\leq \rho,\qquad \qquad 0\leq \alpha\leq \frac{D-\rho}{2}, \qquad \qquad 0\leq \beta\leq \frac{N-D-\rho}{2};
            \end{equation}

            \item for $0\leq i\leq D$ and $0\leq j\leq N-D$,
            \begin{equation*}
            \dim E^{*}_{i,j}\Omega=\begin{cases}
                1&\text{if }\alpha\leq i\leq D-\rho-\alpha,\;\rho+\beta\leq j\leq N-D-\beta,\\
                0&\text{otherwise}.
            \end{cases}
        \end{equation*}
        \end{enumerate}
    \end{lemma}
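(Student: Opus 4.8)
The claim describes the ``support'' of an irreducible $\mathcal{H}$-module $\Omega$: the set of pairs $(i,j)$ with $E^*_{i,j}\Omega \neq 0$ is a rectangle in the $(i,j)$-grid, and on that rectangle $\dim E^*_{i,j}\Omega = 1$. This is a citation to \cite[Lemma~2.13]{Seong2024arxiv}, so the plan is to recover it from the structure theory of the algebra $\mathcal{H}$ developed in \cite{Watanabe} and \cite{LIW2020LAA}.

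First I would record the commutation relations among the generators. Since $L_1$ (resp.\ $L_2$) raises $i$ by $1$ (resp.\ $j$ by $1$) and $R_1=L_1^\top$, $R_2=L_2^\top$ lower them, we have $L_1 = \sum_{i,j} E^*_{i+1,j} L_1 E^*_{i,j}$ and similarly for the others; consequently $K_1 L_1 = q^{-1} L_1 K_1$, $K_1 R_1 = q\, R_1 K_1$, $K_2 L_2 = q^{-1} L_2 K_2$, $K_2 R_2 = q\, R_2 K_2$, while $K_1$ commutes with $L_2,R_2$ and $K_2$ commutes with $L_1,R_1$. One also checks that $L_1$ and $L_2$ commute with each other (two covering steps in different directions compose both ways), and likewise $R_1,R_2$ commute; and the cross relations $[L_1,R_2]=[L_2,R_1]=0$ hold for the same reason. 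The key relations are the ``$\mathfrak{sl}_2$-type'' ones for each index separately: $[L_1,R_1]$ and $[L_2,R_2]$ lie in $\mathcal{K}$ and are explicit diagonal matrices (computed from Corollary~\ref{cover} and Lemma~\ref{qbinom}), so that $\{L_1,R_1,K_1\}$ and $\{L_2,R_2,K_2\}$ each generate a copy of $U_q(\mathfrak{sl}_2)$ (in a suitable normalization), and the two copies commute. Thus $\mathcal{H}$ is essentially a quotient/image of $U_q(\mathfrak{sl}_2)\otimes U_q(\mathfrak{sl}_2)$.

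Given this, the argument runs as follows. Since $\Omega$ is irreducible and $\mathcal{K}\subseteq\mathcal{H}$ is commutative with basis the orthogonal idempotents $E^*_{i,j}$, we have $\Omega = \bigoplus_{i,j} E^*_{i,j}\Omega$. On each nonzero weight space $E^*_{i,j}\Omega$ the matrix $K_1$ acts as the scalar $q^{D/2-i}$ and $K_2$ as $q^{(N-D)/2-j}$, so distinct $(i,j)$ give distinct $(K_1,K_2)$-eigenvalues. Now invoke the finite-dimensional representation theory of $U_q(\mathfrak{sl}_2)\otimes U_q(\mathfrak{sl}_2)$ (with $q$ not a root of unity, which holds since $q$ is a prime power): every irreducible module is a tensor product $V(m)\otimes V(n)$ of highest-weight modules, hence has one-dimensional weight spaces forming a ``rectangle'' of $(i,j)$-values. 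Transporting this back through the algebra map $U_q(\mathfrak{sl}_2)^{\otimes 2}\to\mathcal{H}$ gives integers $\alpha,\alpha',\beta',\beta$ (the boundaries of the rectangle in the $i$- and $j$-directions) with $\dim E^*_{i,j}\Omega=1$ precisely when $\alpha\le i\le\alpha'$ and $\beta'\le j\le\beta$. Finally I would name the parameters as in the statement: set $\rho = \beta' - \alpha'$... more precisely, matching the rectangle $[\alpha,\,D-\rho-\alpha]\times[\rho+\beta,\,N-D-\beta]$ forces the identifications $\alpha=\alpha$, $D-\rho-\alpha=\alpha'$, $\rho+\beta=\beta'$, $N-D-\beta=$ (top of $j$-range), and the constraints \eqref{abcondition} are then read off from the facts that a rectangle has $\alpha\le D-\rho-\alpha$ and $\rho+\beta\le N-D-\beta$, i.e.\ $2\alpha\le D-\rho$ and $2\beta\le N-D-\rho$, together with $\rho\ge 0$ (which expresses that the $j$-range sits at or above the $i$-range height, a consequence of how $P_{i,j}$ is positioned: $j$ cannot drop below what the covering structure permits).

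The main obstacle is verifying that $\{L_1,R_1,K_1\}$ and $\{L_2,R_2,K_2\}$ really do satisfy (a scalar multiple of) the $U_q(\mathfrak{sl}_2)$ relations and genuinely commute with each other as operators on $\Psi$ — the commutation of $L_1$ with $L_2$ and the computation of $[L_1,R_1]\in\mathcal{K}$ require a careful count of chains $u \subseteq \cdot \subseteq v$ within $P$, distinguishing whether intermediate steps are $\slash$- or $\backslash$-covers, and this is where the $q$-binomial identities in the appendix would be used. Once those relations are in hand, the representation-theoretic conclusion (rectangular support, multiplicity one) and the bookkeeping to match parameters with \eqref{abcondition} are routine; the genuinely new input is essentially the structure of $\mathcal{H}$ as a homomorphic image of $U_q(\mathfrak{sl}_2)^{\otimes 2}$, after which irreducibility pins down $\Omega$ up to the discrete data $(\alpha,\beta,\rho)$.
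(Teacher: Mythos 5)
The paper does not prove this lemma; it is quoted verbatim from \cite[Lemma~2.13]{Seong2024arxiv}, which in turn rests on the structure theory of $\mathcal{H}$ developed in \cite{Watanabe} and \cite{LIW2020LAA}. So the only thing to assess is whether your reconstruction is sound, and it is not: the structural claim on which the whole argument rests is false.

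You assert that $L_1$ and $L_2$ commute and that $\{L_1,R_1,K_1\}$ and $\{L_2,R_2,K_2\}$ generate two commuting copies of $U_q(\mathfrak{sl}_2)$, making $\mathcal{H}$ an image of $U_q(\mathfrak{sl}_2)\otimes U_q(\mathfrak{sl}_2)$. A direct count shows this fails. Take $u\in P_{i,j}$ and $w\in P_{i+1,j+1}$ with $u\subseteq w$. The intermediate subspaces $v$ with $u\subseteq v\subseteq w$ and $\dim v=\dim u+1$ correspond to the $q+1$ lines of $w/u$; exactly one of them (namely $v=u+(w\cap x)$) lies in $P_{i+1,j}$, and the remaining $q$ lie in $P_{i,j+1}$. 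Hence $(L_1L_2)_{u,w}=1$ while $(L_2L_1)_{u,w}=q$, i.e.\ $L_2L_1=qL_1L_2$: the operators $q$-commute rather than commute. Similarly $[L_1,R_2]$ and $[L_2,R_1]$ do not vanish in general. This is precisely why the references describe $\mathcal{H}$ as a homomorphic image of (a variant of) the quantum \emph{affine} algebra $U_q(\widehat{\mathfrak{sl}}_2)$, not of $U_q(\mathfrak{sl}_2)^{\otimes 2}$, and why the irreducible modules are tensor products of \emph{evaluation} modules rather than outer tensor products of two independent highest-weight modules.

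The gap is not merely cosmetic. If $\Omega$ really were an outer tensor product $V(m)\otimes V(n)$ for two commuting copies, its isomorphism type would be determined by two independent interval lengths, and nothing would couple the $i$-range to the $j$-range. But the lemma's parameterization has $\rho$ appearing in \emph{both} ranges $[\alpha,\,D-\rho-\alpha]$ and $[\rho+\beta,\,N-D-\beta]$ (so that $i+j$ runs over an interval symmetric about $N/2$); this coupling, and the existence of inequivalent modules with the same weight-support rectangle sizes but different $\rho$, are artifacts of the affine structure that your argument cannot see. To repair the proof you would need to establish the actual defining relations of $\mathcal{H}$ (the $q$-commutation relations above together with the $U_q(\widehat{\mathfrak{sl}}_2)$-type Serre/evaluation relations, as in \cite[Section~7]{Watanabe}) and then invoke the classification of irreducible $\mathcal{H}$-submodules of $\Psi$ as tensor products of evaluation modules from \cite{LIW2020LAA}; the rectangular, multiplicity-one weight support and the constraints \eqref{abcondition} then come out of that classification, not out of $\mathfrak{sl}_2\times\mathfrak{sl}_2$ weight theory.
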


    We illustrate Lemma \ref{typelem} using the diagram below. 

    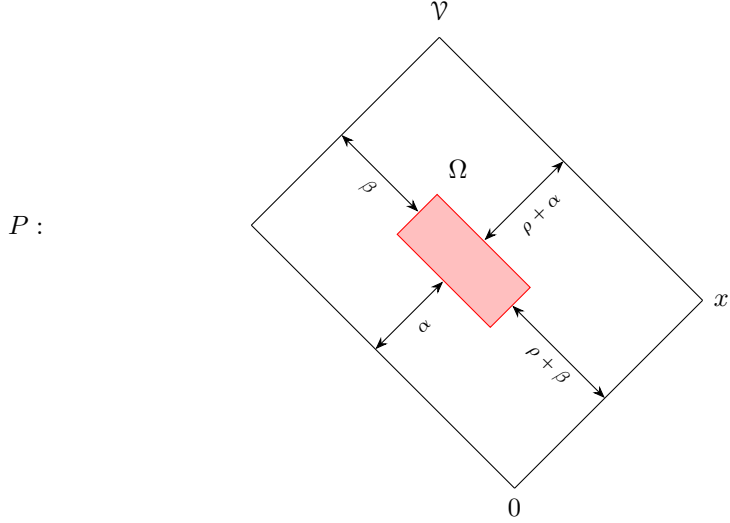
\begin{figure}[!ht]
\centering
{%
\begin{circuitikz}
\tikzstyle{every node}=[font=\normalsize]
\draw [short] (2.5,-3.5) -- (-1,0);
\draw [short] (-1,0) -- (1.5,2.5);
\draw [short] (1.5,2.5) -- (5,-1);
\draw [short] (5,-1) -- (2.5,-3.5);
\draw [ color=red , fill=pink, rotate around={45:(2,2)}] (0.5,1.25) rectangle (-0.25,-0.5);
\draw [<->, >=Stealth] (2.1,-0.2) -- (3.15,0.85);
\draw [<->, >=Stealth] (0.2,1.2) -- (1.22,0.18);
\draw [<->, >=Stealth] (3.7,-2.3) -- (2.47,-1.07);
\draw [<->, >=Stealth] (1.55,-0.75) -- (0.65,-1.65);
\node [font=\normalsize] at (1.5,2.85) {$\mathcal{V}$};
\node [font=\normalsize] at (5.25,-1) {$x$};
\node [font=\normalsize] at (2.5,-3.75) {$0$};
\node [font=\normalsize] at (1.75,0.75) {$\Omega$};
\node [font=\normalsize] at (-4,0) {$P:$};
\node [font=\normalsize] at (8,0) {$\;$};
\node [font=\scriptsize, rotate around={45:(0,0)}] at (1.3,-1.35) {$\alpha$};
\node [font=\scriptsize, rotate around={45:(0,0)}] at (2.85,0.15) {$\rho+\alpha$};
\node [font=\scriptsize, rotate around={-45:(0,0)}] at (2.95,-1.85) {$\rho+\beta$};
\node [font=\scriptsize, rotate around={-45:(0,0)}] at (0.55,0.5) {$\beta$};

\end{circuitikz}
}%
\caption{The irreducible $\mathcal{H}$-module $\Omega$ is represented by the red rectangle.}
\label{locationabp}
\end{figure}
\newpage

\begin{definition}
        Let $\Omega$ denote an irreducible $\mathcal{H}$-module. Referring to Lemma \ref{typelem}, we call the triple $(\alpha,\beta,\rho)$ the \emph{type of }$\Omega$. 
    \end{definition}

    \begin{lemma}{\rm{\cite[p.~133]{LIW2020LAA}}}
    \label{unique}
        For each triple $(\alpha,\beta,\rho)$ of integers that satisfy (\ref{abcondition}), there exists an irreducible $\mathcal{H}$-module of type $(\alpha,\beta,\rho)$ that is unique up to isomorphism.
    \end{lemma}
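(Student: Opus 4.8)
The plan is to prove existence and uniqueness by analysing how the generators $L_1,L_2,R_1,R_2,K_1^{\pm1},K_2^{\pm1}$ of $\mathcal{H}$ act on a module whose $E^*_{i,j}$-support is the rectangle described in Lemma~\ref{typelem}. The first ingredient is the list of relations among the generators, available from \cite{LIW2020LAA,Watanabe,Seong2024arxiv}: the matrices $K_1,K_2$ are invertible and mutually commuting, and act on $E^*_{i,j}\Psi$ as the scalars $q^{D/2-i}$ and $q^{(N-D)/2-j}$; the operators $L_1,R_1$ shift the first grading index by $\pm1$ while $L_2,R_2$ shift the second; each $K_a$ commutes with $L_b$ and $R_b$ when $a\neq b$ and $q$-commutes with them when $a=b$; there are relations linking the two index directions, such as $L_1L_2=qL_2L_1$ and $L_1R_2=R_2L_1$ together with their transposes; and within each index one has an $\mathfrak{sl}_2$-type relation expressing $L_aR_a-R_aL_a$ as a scalar multiple of $K_a^2-K_a^{-2}$ on every graded piece. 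These relations are exactly what is needed to propagate data across the rectangle in Figure~\ref{locationabp}.

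For existence, fix $(\alpha,\beta,\rho)$ satisfying (\ref{abcondition}) and set $A=D-\rho-2\alpha$, $B=N-D-\rho-2\beta$, so that $A,B\ge0$. Decompose the standard module $\Psi$ into irreducible $\mathcal{H}$-modules; by Lemma~\ref{typelem} each summand has some type, a summand contributing $1$ to $\dim E^*_{i,j}\Psi$ exactly when $(i,j)$ lies in its support rectangle, while on the other hand $\dim E^*_{i,j}\Psi=|P_{i,j}|$. Comparing these two expressions over all $(i,j)$ yields a linear system for the multiplicities $m_{\alpha',\beta',\rho'}$ whose solution, obtained by a finite-difference (inclusion--exclusion) argument on rectangle containments and simplified using the $q$-binomial identities of the appendix, is positive for every valid type; in particular $m_{\alpha,\beta,\rho}>0$. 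Alternatively one can display a generator directly: take $\zeta\in E^*_{\alpha,\rho+\beta}\Psi$ to be a suitable alternating sum of characteristic vectors of subspaces in $P_{\alpha,\rho+\beta}$, check $R_1\zeta=R_2\zeta=0$ by a telescoping count, and verify from the relations that $\mathcal{H}\zeta=\sum_{s,t}\mathbb{C}\,L_1^sL_2^t\zeta$ is thin with support rectangle $\{(\alpha+s,\rho+\beta+t)\mid 0\le s\le A,\ 0\le t\le B\}$ and is irreducible, the latter because a nonzero submodule is $\{E^*_{i,j}\}$-graded and $\zeta$ (hence all of $\mathcal{H}\zeta$) can be recovered from any of its homogeneous components by applying powers of $R_1,R_2$ and then $L_1,L_2$.

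For uniqueness, let $\Omega,\Omega'$ both have type $(\alpha,\beta,\rho)$, with common support rectangle indexed by $\alpha\le i\le D-\rho-\alpha$ and $\rho+\beta\le j\le N-D-\beta$. Choose spanning vectors $\omega_{i,j}\in E^*_{i,j}\Omega$ and $\omega'_{i,j}\in E^*_{i,j}\Omega'$ of these one-dimensional spaces. Since $K_1,K_2$ act on both modules by the same diagonal scalars, it remains to match the ladder operators: write $L_1\omega_{i,j}=\phi_{i,j}\omega_{i+1,j}$, $R_1\omega_{i,j}=\psi_{i,j}\omega_{i-1,j}$, $L_2\omega_{i,j}=\widetilde\phi_{i,j}\omega_{i,j+1}$, $R_2\omega_{i,j}=\widetilde\psi_{i,j}\omega_{i,j-1}$, and similarly for $\Omega'$. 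The $\mathfrak{sl}_2$-type commutators force the products $\phi_{i,j}\psi_{i+1,j}$ and $\widetilde\phi_{i,j}\widetilde\psi_{i,j+1}$ to equal explicit scalars depending only on $(i,j)$ and the type; the relation $L_1L_2=qL_2L_1$ forces $\phi_{i,j}\widetilde\phi_{i+1,j}=q\,\widetilde\phi_{i,j}\phi_{i,j+1}$; and the vanishing of the ladder operators on the boundary of the rectangle removes the remaining scaling freedom. After rescaling the $\omega_{i,j}$ and $\omega'_{i,j}$ so that all corresponding structure constants coincide, the linear bijection $\omega_{i,j}\mapsto\omega'_{i,j}$ intertwines every generator of $\mathcal{H}$, hence is an $\mathcal{H}$-module isomorphism.

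The step I expect to be the main obstacle is the nonvanishing bookkeeping common to both halves. In the existence argument, in its direct form, one must check that the $q$-integers produced by iterating $L_1,L_2$ on $\zeta$ stay nonzero throughout the rectangle and vanish exactly on its boundary, so that $\mathcal{H}\zeta$ genuinely has the prescribed type rather than collapsing to a smaller one; in the uniqueness argument one must check that the recursions for $\phi,\psi,\widetilde\phi,\widetilde\psi$ have a solution unique up to the allowed rescalings. Both reduce to the facts that $[m]\neq0$ for $1\le m\le A$ and $1\le m\le B$ and that the indices involved stay within $[0,D]$ and $[0,N-D]$, which is guaranteed precisely by the inequalities (\ref{abcondition}). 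The remaining technical input, namely the exact commutation relations for the covering operators on the poset $P$, is purely combinatorial and can be quoted from \cite{LIW2020LAA,Seong2024arxiv,Watanabe}.
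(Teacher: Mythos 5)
The paper does not prove this lemma; it is imported verbatim from \cite[p.~133]{LIW2020LAA}, so there is no internal proof to compare against. Your outline reconstructs, in broad strokes, the strategy of the cited sources (realize $\Psi$ as a module for an algebra of $U_q(\widehat{\mathfrak{sl}}_2)$ type and classify the thin irreducible constituents by their ladder coefficients), which is the right idea. But as written it is not a proof: everything hinges on a list of relations for $L_1,L_2,R_1,R_2,K_1^{\pm1},K_2^{\pm1}$ that you state only by example (``such as \dots'') and never verify. At least one of these is stated with the wrong convention: counting intermediate subspaces in a length-two interval $[u,w]$ with $\dim(w\cap x)=\dim(u\cap x)+1$ shows that exactly one intermediate $v$ lies in $P_{i+1,j}$ and $q$ of them lie in $P_{i,j+1}$, which gives $L_2L_1=qL_1L_2$ rather than $L_1L_2=qL_2L_1$; and the asserted ``$\mathfrak{sl}_2$-type relation'' for $L_aR_a-R_aL_a$ does not hold verbatim on $\Psi$ --- the correct identities in \cite{Watanabe, LIW2020LAA} are more delicate. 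Since both halves of your argument reduce to these relations, the proof is not self-contained; it silently quotes the very source the lemma is being attributed to.

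Beyond that, the two verifications you yourself identify as ``the main obstacle'' are the actual mathematical content and are only asserted. For existence, the triangular system for the multiplicities does have a unique solution (indicator functions of the distinct support rectangles are linearly independent, and the rectangle determines the triple), but positivity of that solution is a genuine $q$-binomial computation that the appendix identities (which are alternating-sum evaluations) do not supply; in the ``direct'' variant, checking $R_1\zeta=R_2\zeta=0$, that $\mathcal{H}\zeta$ has exactly the prescribed rectangular support, and that it is irreducible is substantive and unproven. For uniqueness, the claim that the recursions for $\phi,\psi,\widetilde\phi,\widetilde\psi$ admit a solution unique up to rescaling needs the boundary-vanishing conditions to be compatible with the interior recursions, which again depends on the precise relations. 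So the proposal is a plausible plan with the correct architecture, but the steps that would constitute a proof are all deferred.
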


\begin{definition}
By the \emph{primary $\mathcal{H}$-module}, we mean the irreducible $\mathcal{H}$-module of type $(0,0,0)$; denote this module by $\Omega_0$.
\end{definition}

Recall that the standard module $\Psi$ is a direct sum of irreducible $\mathcal{H}$-modules. Write 
\begin{equation}
    \label{direct}
        \Psi=\sum_{\Omega}\Omega\qquad \qquad (\text{direct sum}),
    \end{equation}
    where $\Omega$ are irreducible $\mathcal{H}$-modules.

\begin{lemma} 
\label{e*ijpsi}
    For $0\leq i\leq D$ and $0\leq j\leq N-D$, the dimension of $E_{i,j}^{*}\Psi$ is equal to the number of summands $\Omega$ in (\ref{direct}) such that $E_{i,j}^{*}\Omega\neq 0$.
\end{lemma}

\begin{proof}
    Apply $E^*_{i,j}$ to both sides of (\ref{direct}) to obtain
    \begin{equation}
    \label{e*ij}
        E^*_{i,j}\Psi=\sum_{\Omega}E^*_{i,j}\Omega.
    \end{equation}

    By (\ref{e*ij}),
    \begin{equation}
    \label{dime*ij}
        \dim E^*_{i,j}\Psi=\sum_{\Omega}\dim E^*_{i,j}\Omega.
    \end{equation}
    By Lemma \ref{typelem}(ii) and the fact that the summands $\Omega$ in (\ref{direct}) are irreducible $\mathcal{H}$-modules, the dimension of $E^{*}_{i,j}\Omega$ is either $0$ or $1$ for all $\Omega$. The result follows from (\ref{dime*ij}).
\end{proof}

We now consider a specific type of irreducible $\mathcal{H}$-modules that will be useful. We say that an irreducible $\mathcal{H}$-module $\Omega$ is \emph{alpha-dominant} whenever $\Omega$ has type $(\alpha,0,0)$ for some $\alpha$ $(0\leq \alpha\leq \frac{D}{2})$. In this case, we call $\alpha$ the \emph{gap of $\Omega$}. Observe that the $\mathcal{H}$-module $\Omega_0$ is alpha-dominant with gap $0$.

\begin{lemma}
\label{alphadominant}
    For $0\leq \alpha\leq \frac{D}{2}$, let $\mu_\alpha$ denote the number of summands $\Omega$ in (\ref{direct}) that are alpha-dominant with gap $\alpha$. Then
    \begin{equation}
    \label{m_alpha}
        \mu_\alpha=\binom{D}{\alpha}_q-\binom{D}{\alpha-1}_q.
    \end{equation}
\end{lemma}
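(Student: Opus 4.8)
The plan is to relate the multiplicities $\mu_\alpha$ to the dimensions of the coordinate subspaces $E^*_{\alpha,0}\Psi$, which can be computed directly by counting $P_{\alpha,0}$, and then to extract $\mu_\alpha$ by taking successive differences.

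First I would pin down exactly which irreducible summands $\Omega$ in (\ref{direct}) satisfy $E^*_{\alpha,0}\Omega \neq 0$. Suppose $\Omega$ has type $(\alpha',\beta,\rho)$. By Lemma \ref{typelem}(ii), $E^*_{\alpha,0}\Omega \neq 0$ forces $\rho+\beta \leq 0 \leq N-D-\beta$; since $\rho,\beta \geq 0$ this yields $\rho=\beta=0$, so $\Omega$ is alpha-dominant. The remaining conditions from Lemma \ref{typelem}(ii) are then $\alpha' \leq \alpha \leq D-\alpha'$: the first says $\alpha' \leq \alpha$, and the second, $\alpha+\alpha' \leq D$, follows automatically since $\alpha \leq D/2$ and $\alpha' \leq \alpha$. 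Hence $E^*_{\alpha,0}\Omega \neq 0$ if and only if $\Omega$ is alpha-dominant with gap at most $\alpha$.

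Next, by Lemma \ref{e*ijpsi} applied with $(i,j)=(\alpha,0)$, the dimension of $E^*_{\alpha,0}\Psi$ equals the number of summands $\Omega$ in (\ref{direct}) with $E^*_{\alpha,0}\Omega \neq 0$, which by the previous paragraph equals $\sum_{\alpha'=0}^{\alpha}\mu_{\alpha'}$. On the other hand, $E^*_{\alpha,0}$ is the diagonal $0$--$1$ matrix supported on the coordinates indexed by $P_{\alpha,0}$, so $E^*_{\alpha,0}\Psi$ is the span of $\{\widehat{u}\mid u\in P_{\alpha,0}\}$ and thus has dimension $|P_{\alpha,0}|$. Since $u\in P_{\alpha,0}$ means $\dim(u\cap x)=\dim u=\alpha$, i.e.\ $u\subseteq x$ with $\dim u=\alpha$, Lemma \ref{qbinom} (with $\ell=D$, $n=\alpha$) gives $|P_{\alpha,0}|=\binom{D}{\alpha}_q$. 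Combining, $\sum_{\alpha'=0}^{\alpha}\mu_{\alpha'}=\binom{D}{\alpha}_q$ for all $0\leq\alpha\leq D/2$.

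Finally I would extract $\mu_\alpha$ by differences: for $\alpha\geq 1$, $\mu_\alpha=\binom{D}{\alpha}_q-\binom{D}{\alpha-1}_q$, while for $\alpha=0$ the sum formula reads $\mu_0=\binom{D}{0}_q=1$, in agreement with the stated formula under the convention $\binom{D}{-1}_q=0$. I do not anticipate a serious obstacle; the only point requiring care is the bookkeeping in the first step, namely checking that the constraint $j=0$ forces $\rho=\beta=0$ and that the surviving inequality reduces to $\alpha'\leq\alpha$ under the hypothesis $\alpha\leq D/2$.
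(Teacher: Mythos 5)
Your proof is correct and follows essentially the same route as the paper: both identify the summands $\Omega$ with $E^*_{\alpha,0}\Omega\neq 0$ as exactly the alpha-dominant ones with gap at most $\alpha$, compute $\dim E^*_{\alpha,0}\Psi=\binom{D}{\alpha}_q$ by counting $\alpha$-dimensional subspaces of $x$, and apply Lemma \ref{e*ijpsi}. The only difference is cosmetic — you extract $\mu_\alpha$ by differencing the partial sums directly, whereas the paper packages the same computation as a strong induction on $\alpha$ — and your explicit verification that $j=0$ forces $\rho=\beta=0$ fills in a step the paper leaves as an observation.
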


\begin{proof}  
We proceed by strong induction on $\alpha$. Suppose that $\alpha=0$. Then $\Omega=\Omega_0$, so (\ref{m_alpha}) holds. Now suppose that (\ref{m_alpha}) holds for $0\leq \alpha\leq \ell$, where $\ell$ is a nonnegative integer with $\ell<\left\lfloor \frac{D}{2}\right\rfloor$. We will show that (\ref{m_alpha}) also holds for $\alpha=\ell+1$. By construction, the dimension of $E^{*}_{\ell+1,0}\Psi$ is equal to the number of $(\ell+1)$-dimensional subspaces of $x$. By Lemma \ref{qbinom}, this number is
    \begin{equation*}
        \binom{D}{\ell+1}_q.
    \end{equation*}
Observe that an irreducible $\mathcal{H}$-module $\Omega$ satisfies $E^*_{\ell+1,0}\Omega\neq 0$ if and only if $\Omega$ is alpha-dominant with gap $\alpha$ for $0\leq \alpha\leq \ell+1$. 
By the inductive hypothesis,
$$
   \sum_{\alpha=0}^{\ell} \mu_\alpha = \binom{D}{\ell}_q,
$$
which is the total number of irreducible $\mathcal{H}$-modules that are $\alpha$-dominant with gap $\alpha$ for $0 \leq \alpha \leq \ell$.
By Lemma \ref{e*ijpsi},
    \begin{equation*}
        \mu_{\ell+1}=\binom{D}{\ell+1}_q-\binom{D}{\ell}_q.
    \end{equation*}
    The result follows.
\end{proof}

\section{Irreducible $T$-modules of $J_q(N,D)$ and its nucleus}
Recall the Grassmann graph $\Gamma=J_q(N,D)$. 
Pick $x \in X$ and write $T=T(x)$.
In this section we discuss how the irreducible $T$-modules of $\Gamma$ are related to the irreducible $\mathcal{H}$-modules. For the nucleus $\mathcal{N}$ of $\Gamma$, we find the multiplicity of the irreducible $T$-submodules of $\mathcal{N}$ with a given endpoint. 

By \cite[Example~6.1]{Ter1993JOACIII}, each irreducible $T$-module of $\Gamma$ is determined up to isomorphism by the following parameters: endpoint $r$, dual endpoint $t$, diameter $d$, auxiliary parameter $e$. Furthermore, these parameters satisfy the following (i)
--(iv):
\begin{itemize}
\setlength\itemsep{0pt}
	\item[(i)] $0 \leq \frac{D-d}{2} \leq r \leq t \leq D-d \leq D$;
	\item[(ii)] $e+d+D$ is even;
	\item[(iii)] $|e| \leq 2r - D + d$;
	\item[(iv)] $d \in \{ e + D- 2r, \min\{ D - t, e + D - 2r + 2(N-2D) \} \}$.
\end{itemize}

Let $\Delta$ denote the set of quadruples $(r,t,d,e)$ of integers that satisfy (i)--(iv) above. By \cite[Theorem~3.1(i),~5.6]{LIW2020LAA}, for each quadruple $(r,t,d,e)\in \Delta$, there exists an irreducible $T$-module of $\Gamma$ that has the parameters $r,t,d,e$.

\begin{definition}
    By the \emph{primary $T$-module of $\Gamma$}, we mean an irreducible $T$-module of $\Gamma$ with endpoint $0$, dual endpoint $0$, diameter $D$, and the auxiliary parameter $0$. Denote this module by $W_0$.
\end{definition} 

By \cite[Lemma~2.13]{LIW2020LAA}, the graph $\Gamma$ is thin. Let $W$ denote an irreducible $T$-module of $\Gamma$ with endpoint $r$, dual endpoint $t$, diameter $d$ and auxiliary parameter $e$. By \cite[Example~6.1]{Ter1993JOACIII}, the intersection numbers of $W$ are
\begin{align}
    a_i(W)&=q[D][N-D]-[t][N+1-t]-b_i(W)-c_i(W) &(0\leq i\leq d),\label{aW}\\
    b_i(W)&=q^{2i+1+r+\frac{D-d-e}{2}}[d-i]\Bigl[N-i-r-t+\frac{e-D-d}{2}\Bigr]&(0\leq i<d),\label{bW}\\
    c_i(W)&=q^{t}[i]\Bigl[i+r-t+\frac{D-d-e}{2}\Bigr]&(0<i\leq d),\label{cW}
\end{align}
where $b_d(W)=c_0(W)=0$.

Recall the standard $\text{Mat}_{P}(\mathbb{C})$-module $\Psi$ and the standard $\text{Mat}_{X}(\mathbb{C})$-module $V$. We may regard $V$ as a subspace of $\Psi$ in the sense that $V$ is isomorphic to 
\begin{equation*}
    \sum_{j=0}^{D}E^*_{D-j,j}\Psi \qquad \qquad (\text{direct sum})
\end{equation*}
as vector spaces. We use the following notation. For an $\mathcal{H}$-submodule $\Omega$ of $\Psi$, let $\widetilde{\Omega}$ denote the subspace of $V$ defined by the restriction of 
\begin{equation*}
    \sum_{j=0}^{D}E^*_{D-j,j}\Omega
\end{equation*}
to $V$. 

\begin{lemma}{\rm{\cite[Proposition~5.5]{LIW2020LAA}}}
\label{tildelem}
    Let $\Omega$ denote an irreducible $\mathcal{H}$-module. Then $\widetilde{\Omega}$ is an irreducible $T$-module of $\Gamma$.
\end{lemma}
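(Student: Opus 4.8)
\textbf{Proof proposal for Lemma \ref{tildelem}.}

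The plan is to invoke the structural correspondence between $\mathcal{H}$-modules on $\Psi$ and $T$-modules on $V$ that is already established in \cite{LIW2020LAA}, and to verify that the hypotheses of the relevant result there are met. First I would recall that the vector space $V$ is identified with the subspace $\sum_{j=0}^{D}E^*_{D-j,j}\Psi$ of $\Psi$, so that for an $\mathcal{H}$-submodule $\Omega$ of $\Psi$, the subspace $\widetilde{\Omega}$ is simply $\sum_{j=0}^{D}E^*_{D-j,j}\Omega$ viewed inside $V$. The key is that the adjacency matrix $A$ and the dual adjacency matrix $A^*$ of $\Gamma$, acting on $V$, can be expressed in terms of the generators $L_1,L_2,R_1,R_2,K_1^{\pm1},K_2^{\pm1}$ of $\mathcal{H}$ when restricted to the ``diagonal'' layer $\sum_{j}E^*_{D-j,j}\Psi$; this is precisely the content of the setup preceding \cite[Proposition~5.5]{LIW2020LAA}. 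In particular $M^* = M^*(x)$ is recovered from the $E^*_{D-j,j}$, and $A$ on $V$ is realized by a suitable combination of the raising/lowering maps of $\mathcal{H}$ composed with the projections onto the diagonal layer.

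The main steps, in order, are: (1) confirm that $\widetilde{\Omega}$ is nonzero and is invariant under $A$ and $A^*$, hence is a $T$-module — this uses Lemma \ref{typelem}, which guarantees that $E^*_{i,j}\Omega\neq 0$ for $(i,j)$ in a combinatorial rectangle, so at least one cell $(D-j,j)$ on the diagonal lies in that rectangle exactly when the rectangle meets the antidiagonal $i+j=D$, and one checks from the inequalities (\ref{abcondition}) together with $\alpha\le i\le D-\rho-\alpha$, $\rho+\beta\le j\le N-D-\beta$ that this intersection is always nonempty; (2) show that $\widetilde{\Omega}$ is \emph{irreducible} as a $T$-module. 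For step (2) the cleanest route is dimension counting: by Lemma \ref{typelem}(ii) each $E^*_{D-j,j}\Omega$ is at most one-dimensional, so $\dim\widetilde{\Omega}\le D+1$, and in fact equals the number of $j\in\{0,\dots,D\}$ with $(D-j,j)$ in the rectangle, which is $d+1$ for the appropriate diameter $d$; on the other hand the irreducible $T$-module with the matching parameters $(r,t,d,e)$ — read off from $(\alpha,\beta,\rho)$ via the dictionary in \cite[Section~5]{LIW2020LAA} — has exactly that dimension since $\Gamma$ is thin, so $\widetilde{\Omega}$ being a $T$-module of the minimal possible dimension containing a nonzero vector in each relevant $E^*_i V$ forces irreducibility.

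Alternatively, and more in keeping with the cited reference, I would simply apply \cite[Proposition~5.5]{LIW2020LAA} verbatim: that proposition asserts exactly that the restriction of $\sum_{j=0}^{D}E^*_{D-j,j}\Omega$ to $V$ is an irreducible $T$-module, provided $\Omega$ is an irreducible $\mathcal{H}$-module, and the only thing to check is that our $\mathcal{H}$ and our $T$ coincide with theirs — which they do, since both are defined from the same generators with respect to the same base vertex $x\in P_D$. So the proof reduces to: (a) cite the identification $V\cong\sum_{j}E^*_{D-j,j}\Psi$; (b) cite \cite[Proposition~5.5]{LIW2020LAA}; (c) note that the definitions of $\mathcal{H}=\mathcal{H}(x)$ and $T=T(x)$ used here agree with those in \cite{LIW2020LAA}.

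The step I expect to be the main obstacle is making precise the claim that $A$ and $A^*$ of the Grassmann graph, acting on $V$, are genuinely realized inside $\mathcal{H}$ after restriction to the diagonal layer — i.e.\ that $\widetilde{\Omega}$ really is closed under $T$ and not merely under some larger or smaller algebra. This is where one must be careful about the identification $V\cong\sum_j E^*_{D-j,j}\Psi$ and about how $A=A_1$ factors through the covering maps $L_1,L_2,R_1,R_2$ (roughly, an edge $y\sim z$ in $\Gamma$ factors as $z$ covering $y\cap z$ covering... so $A$ restricted to $V$ is a sum of products like $R_iL_j$ composed with the diagonal projection). Since all of this is done in \cite{LIW2020LAA}, I would not reprove it but would state the identification carefully and cite \cite[Proposition~5.5]{LIW2020LAA} for the conclusion; the nonvanishing of $\widetilde{\Omega}$ (step 1 above) is the only genuinely new small verification, and it follows immediately from Lemma \ref{typelem} since the antidiagonal $\{(D-j,j):0\le j\le D\}$ always meets the rectangle $[\alpha,D-\rho-\alpha]\times[\rho+\beta,N-D-\beta]$ under the standing assumption $N>2D$.
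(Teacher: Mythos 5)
Your fallback route --- identify $V$ with $\sum_{j}E^*_{D-j,j}\Psi$ and cite \cite[Proposition~5.5]{LIW2020LAA} verbatim --- is exactly what the paper does: the lemma carries the citation in its header and no proof is given. To that extent you have matched the paper.

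However, the one piece of mathematics you claim to verify yourself is false. You assert that the antidiagonal $\{(D-j,j): 0\le j\le D\}$ always meets the rectangle $[\alpha,\,D-\rho-\alpha]\times[\rho+\beta,\,N-D-\beta]$ when $N>2D$, hence $\widetilde{\Omega}\neq 0$. Writing $i=D-j$, the antidiagonal meets the rectangle iff the intervals $[\rho+\alpha,\,D-\alpha]$ and $[\rho+\beta,\,N-D-\beta]$ intersect, which (given (\ref{abcondition}) and $N\ge 2D$) reduces to the single condition $\rho+\alpha+\beta\le D$, i.e.\ $t\le D$ for the dual endpoint $t$ of Lemma \ref{conversion}. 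This condition is not implied by (\ref{abcondition}). Concretely, take $N=5$, $D=1$ and the type $(\alpha,\beta,\rho)=(0,2,0)$: the rectangle is $\{0,1\}\times\{2\}$, which misses the antidiagonal $\{(1,0),(0,1)\}$, so $\widetilde{\Omega}=0$; and a count of $\dim E^*_{1,1}\Psi=[4]$ and $\dim E^*_{1,2}\Psi=\binom{4}{2}_q$ shows this type occurs in $\Psi$ with multiplicity $q^4+q^2>0$. So the correct reading of the cited proposition is that $\widetilde{\Omega}$ is an irreducible $T$-module \emph{or zero} (equivalently, one must restrict to types with $\rho+\alpha+\beta\le D$); your ``small verification'' would, if taken at face value, prove something false. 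Separately, your dimension-counting argument for irreducibility in step (2) is not airtight as stated: a $T$-module with $\dim E^*_iW\le 1$ for all $i$ could still split as a direct sum of two irreducibles supported on complementary ranges of $i$, so equality of dimension with a known irreducible does not by itself force irreducibility; one needs the tridiagonal action (or the argument in \cite{LIW2020LAA}) to rule out such a splitting.
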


We now describe how the type of $\Omega$ converts to the parameters of $\widetilde{\Omega}$. There are three cases to consider:

\begin{enumerate}[label=(C\arabic*)]
        \item $\beta-\alpha\leq 0$;
        \item $0<\beta-\alpha\leq N-2D$;
        \item $N-2D< \beta-\alpha$.
    \end{enumerate}
    We illustrate the cases (C1)--(C3) using the diagrams below.

    \begin{figure}[!ht]
    \centering
{%
\begin{circuitikz}
\tikzstyle{every node}=[font=\normalsize]
\draw [short] (2.5,-3.5) -- (-1,0);
\draw [short] (-1,0) -- (1.5,2.5);
\draw [short] (1.5,2.5) -- (5,-1);
\draw [short] (5,-1) -- (2.5,-3.5);
\draw [ color=red , fill=pink, rotate around={45:(2,2)}] (0.5,2.25) rectangle (-0.25,-1.5);
\draw [dashed] (5,-1) -- (0, -1);
\draw [<->, >=Stealth] (2.9,-0.9) -- (4.85,-0.9);
\draw [<->, >=Stealth] (1.95,-1.1) -- (2.9,-1.1);
\draw [short] (2.8,-2.09) -- (3,-2.09);
\draw [short] (3,-3.5) -- (2.4,-3.5);
\draw [<->, >=Stealth] (2.9,-2.09) -- (2.9,-3.5);
\node [font=\scriptsize] at (3.95,-0.75) {$r$};
\node [font=\scriptsize] at (2.7,-2.8) {$t$};
\node [font=\scriptsize] at (2.45,-1.25) {$d$};
\node [font=\normalsize] at (1.5,2.85) {$\mathcal{V}$};
\node [font=\normalsize] at (5.25,-1) {$x$};
\node [font=\normalsize] at (2.5,-3.75) {$0$};
\node [font=\normalsize] at (1.75,0.75) {$\Omega$};
\node [font=\normalsize] at (2,-4.5) {Figure 4.1: Case (C1)};
\end{circuitikz}
\hspace{0.25cm}
\begin{circuitikz}
\tikzstyle{every node}=[font=\normalsize]
\draw [short] (2.5,-3.5) -- (-1,0);
\draw [short] (-1,0) -- (1.5,2.5);
\draw [short] (1.5,2.5) -- (5,-1);
\draw [short] (5,-1) -- (2.5,-3.5);
\draw [ color=red , fill=pink, rotate around={45:(2,2)}] (0.5,1.25) rectangle (-0.25,-0.5);
\draw [dashed] (5,-1) -- (0, -1);
\draw [<->, >=Stealth] (2.55,-1.1) -- (4.85,-1.1);
\draw [<->, >=Stealth] (1.75,-0.9) -- (2.55,-0.9);
\draw [short] (2,-1.35) -- (2.3,-1.35);
\draw [short] (2,-3.5) -- (2.6,-3.5);
\draw [<->, >=Stealth] (2.15,-1.35) -- (2.15,-3.5);
\node [font=\scriptsize] at (3.75,-1.3) {$r$};
\node [font=\scriptsize] at (2.35,-2.4) {$t$};
\node [font=\scriptsize] at (2.15,-0.75) {$d$};
\node [font=\normalsize] at (1.5,2.85) {$\mathcal{V}$};
\node [font=\normalsize] at (5.25,-1) {$x$};
\node [font=\normalsize] at (2.5,-3.75) {$0$};
\node [font=\normalsize] at (1.75,0.75) {$\Omega$};
\node [font=\normalsize] at (2,-4.5) {Figure 4.2: Case (C2)};
\end{circuitikz}
}%
\end{figure}

\begin{figure}[!ht]
    \centering
    {%
    \begin{circuitikz}
\tikzstyle{every node}=[font=\normalsize]
\draw [short] (2.5,-3.5) -- (-1,0);
\draw [short] (-1,0) -- (1.5,2.5);
\draw [short] (1.5,2.5) -- (5,-1);
\draw [short] (5,-1) -- (2.5,-3.5);
\draw [ color=red , fill=pink, rotate around={45:(2,2)}] (1.4,0.65) rectangle (-1.15,-0.3);
\draw [dashed] (5,-1) -- (0, -1);
\draw [<->, >=Stealth] (2.3,-1.1) -- (4.85,-1.1);
\draw [<->, >=Stealth] (1.05,-0.9) -- (2.3,-0.9);
\draw [short] (1.3,-1.85) -- (1.5,-1.85);
\draw [short] (1.3,-3.5) -- (2.6,-3.5);
\draw [<->, >=Stealth] (1.4,-1.85) -- (1.4,-3.5);
\node [font=\scriptsize] at (3.55,-1.3) {$r$};
\node [font=\scriptsize] at (1.2,-2.7) {$t$};
\node [font=\scriptsize] at (1.7,-0.75) {$d$};
\node [font=\normalsize] at (1.5,2.85) {$\mathcal{V}$};
\node [font=\normalsize] at (5.25,-1) {$x$};
\node [font=\normalsize] at (2.5,-3.75) {$0$};
\node [font=\normalsize] at (1.9,0.6) {$\Omega$};
\node [font=\normalsize] at (2,-4.5) {Figure 4.3: Case (C3)};
\end{circuitikz}
}%
\end{figure}

\renewcommand{\thefigure}{\arabic{figure}}

\setcounter{figure}{6}
\newpage
\begin{lemma}{\rm{\cite[pp.~126--127]{LIW2020LAA}}}
\label{conversion}
    Let $\Omega$ denote an irreducible $\mathcal{H}$-module of type $(\alpha,\beta,\rho)$. Below we express the parameters of $\widetilde{\Omega}$ in terms of $\alpha,\beta,\rho$. 
    
    In case (C1),
    \begin{equation}
    \label{conversion1}
        r=\rho+\alpha, \qquad \qquad t=\rho+\alpha+\beta, \qquad \qquad d=D-\rho-2\alpha, \qquad \qquad e=\rho.
    \end{equation}
    In case (C2),
    \begin{equation}
    \label{conversion2}
        r=\rho+\beta, \qquad \qquad t=\rho+\alpha+\beta, \qquad \qquad d=D-\rho-\alpha-\beta, \qquad \qquad e=\rho+\alpha-\beta.
    \end{equation}
    In case (C3),
    \begin{equation}
    \label{conversion3}
        r=\rho+\beta, \qquad \qquad t=\rho+\alpha+\beta, \qquad \qquad d=N-D-\rho-2\beta, \qquad \qquad e=\rho-N+2D.
    \end{equation}
\end{lemma}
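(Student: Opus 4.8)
The plan is to unfold the definition of $\widetilde{\Omega}$ and match it against the known parametrization of the thin irreducible $T$-modules of $\Gamma$. The first ingredient is an identification of gradings: a vertex $y\in X$ satisfies $\partial(x,y)=i$ if and only if $\dim(x\cap y)=D-i$, that is $y\in P_{D-i,i}$; hence under the identification $V\cong\sum_{j=0}^{D}E^*_{D-j,j}\Psi$, the idempotent $E^*_i=E^*_i(x)$ of the Terwilliger algebra of $\Gamma$ is precisely the restriction of $E^*_{D-i,i}$ to $V$. By Lemma~\ref{tildelem}, $\widetilde{\Omega}$ is a thin irreducible $T$-module, so by Lemma~\ref{thin}(i) the set $\{i:E^*_i\widetilde{\Omega}\neq 0\}$ is the interval $[r,r+d]$. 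On the other hand, by Lemma~\ref{typelem}(ii), $E^*_{D-i,i}\Omega\neq 0$ exactly when $\alpha\leq D-i\leq D-\rho-\alpha$ and $\rho+\beta\leq i\leq N-D-\beta$, equivalently $\rho+\max\{\alpha,\beta\}\leq i\leq\min\{D-\alpha,\,N-D-\beta\}$. Comparing, $r=\rho+\max\{\alpha,\beta\}$ and $r+d=\min\{D-\alpha,\,N-D-\beta\}$. The lower endpoint bifurcates according to the sign of $\beta-\alpha$, and the upper endpoint according to whether $\beta-\alpha\leq N-2D$; since $N>2D$, these two bifurcations yield exactly the three cases (C1), (C2), (C3), and in each case reading off $r$ and $r+d$ gives the asserted values of $r$ and $d$.

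It then remains to pin down $t$ and $e$, and for this I would compute the action of the adjacency matrix $A$ of $\Gamma$ on a standard basis of $\widetilde{\Omega}$. Adjacency in the Grassmann graph is a two-step move in $P$ — down one level from $y\in P_D$ to a common $(D-1)$-subspace, then back up — and these two steps are realized on the relevant levels by $L_1+L_2$ and $R_1+R_2$; together with $R_i=L_i^{\top}$ and Corollary~\ref{cover} (which counts the $(D-1)$-subspaces of $y$) this gives $A|_V=(R_1+R_2)(L_1+L_2)|_V-[D]\,I$. Substituting the explicit action of $L_1,L_2,R_1,R_2,K_1,K_2$ on the irreducible $\mathcal{H}$-module $\Omega$ (from \cite[Section~7]{Watanabe} and \cite{LIW2020LAA}) into this expression, one obtains the intersection numbers $a_i(\widetilde{\Omega}),b_i(\widetilde{\Omega}),c_i(\widetilde{\Omega})$. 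Then $a_0(\widetilde{\Omega})+b_0(\widetilde{\Omega})=\theta_t$ together with (\ref{eigenvalues}) and the strict monotonicity $\theta_0>\cdots>\theta_D$ determine $t$, after which $c_1(\widetilde{\Omega})$ together with (\ref{cW}) determines $e$; carrying this out in cases (C1)--(C3) produces (\ref{conversion1})--(\ref{conversion3}). Finally one checks that each resulting quadruple $(r,t,d,e)$ lies in $\Delta$, i.e.\ satisfies (i)--(iv).

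The main obstacle is the determination of $t$. Lemma~\ref{typelem}(ii) controls only the position grading $E^*_{i,j}$ of $\Omega$, not the spectrum of the Grassmann adjacency $A$ on $\widetilde{\Omega}$, so locating $t$ forces one to bring in the full module structure of $\Omega$ and to keep careful track of how the antidiagonal slice $\sum_{j}E^*_{D-j,j}\Omega$ sits relative to two different boundaries: the bottom face $\dim(\cdot)=D$ of the projective geometry $P$, and the edge of the rectangle of Figure~\ref{locationabp} representing $\Omega$. It is exactly the competition between these two boundaries — already visible as the two truncations in Figures~4.1--4.3 — that produces the case split, and propagating the $q$-power bookkeeping cleanly through the three cases is where the real labor lies.
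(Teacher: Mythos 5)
The paper does not prove this lemma at all; it is imported verbatim from \cite[pp.~126--127]{LIW2020LAA}, so there is no in-paper argument to compare against. Your proposal is therefore a reconstruction, and the comparison is really with Liang--Ito--Watanabe. Your derivation of $r$ and $d$ is complete and correct: using $\partial(x,y)=D-\dim(x\cap y)$ to identify $E^*_i(x)|_V$ with the restriction of $E^*_{D-i,i}$, Lemma \ref{typelem}(ii) gives that $E^*_i\widetilde{\Omega}\neq 0$ exactly for $\rho+\max\{\alpha,\beta\}\leq i\leq\min\{D-\alpha,\,N-D-\beta\}$, and by thinness and Lemma \ref{thin}(i) this interval is $[r,r+d]$; the two bifurcations ($\beta-\alpha\leq 0$ versus $>0$ at the bottom, $\beta-\alpha\leq N-2D$ versus $>N-2D$ at the top, with the first alternative of one implying the first alternative of the other since $N>2D$) reproduce (C1)--(C3) and the stated $r,d$. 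Your identity $A|_V=(R_1+R_2)(L_1+L_2)|_V-[D]I$ is also correct (the diagonal term counts the $[D]$ hyperplanes of a $D$-space, Corollary \ref{cover}), and the scheme of reading $t$ off from $a_0(W)+b_0(W)=\theta_t$ via the distinctness of the $\theta_i$, then $e$ from $c_1(W)$ and (\ref{cW}), is a legitimate way to finish.

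Two caveats. First, the second half is a plan rather than a proof: the actual substitution of the $U_q(\widehat{\mathfrak{sl}}_2)$-type action of $L_1,L_2,R_1,R_2,K_1^{\pm1},K_2^{\pm1}$ on $\Omega$ and the extraction of $a_i(\widetilde{\Omega}),b_i(\widetilde{\Omega}),c_i(\widetilde{\Omega})$ is precisely the computation carried out in \cite{Watanabe} and \cite{LIW2020LAA}, and it is where all the $q$-power bookkeeping lives; as written you have deferred it entirely. Second, when $d=0$ there is no $c_1(\widetilde{\Omega})$, so your mechanism for pinning down $e$ breaks in that degenerate case and you would need to fall back on condition (iv) of the parametrization in \cite[Example~6.1]{Ter1993JOACIII} (or on $a_0(W)$ directly) to fix $e$ there. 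Neither point is a wrong turn, but both would have to be supplied before this counts as an independent proof rather than a correct outline of the one in \cite{LIW2020LAA}.
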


Our next goal is to find the multiplicity of irreducible $T$-submodules of $\mathcal{N}$ with a given endpoint. We first discuss how some $\mathcal{H}$-modules are related to the nucleus $\mathcal{N}$.

\begin{remark}
\label{primary}
    Recall the primary $\mathcal{H}$-module $\Omega_0$. Since $\Omega_0$ belongs to case (C1), we have $\widetilde{\Omega_0}=W_0$  by (\ref{conversion1}). Observe that $W_0$ has displacement $0$. Hence, $W_0$ is an irreducible $T$-submodule of $\mathcal{N}$.
\end{remark}

\begin{lemma}
\label{beta-alpha}
    Let $\Omega$ denote an irreducible $\mathcal{H}$-module of type $(\alpha,\beta,\rho)$ such that $\widetilde{\Omega}$ is an irreducible $T$-submodule of the nucleus $\mathcal{N}$. Then $\beta-\alpha\leq 0$.
\end{lemma}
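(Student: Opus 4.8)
The statement says: if $\Omega$ is an irreducible $\mathcal{H}$-module of type $(\alpha,\beta,\rho)$ whose associated $T$-module $\widetilde{\Omega}$ lies in the nucleus $\mathcal{N}$, then $\beta-\alpha\le 0$, i.e. case (C1) occurs. The natural approach is by contradiction: assume $\beta-\alpha>0$, so $\Omega$ belongs to case (C2) or (C3), and derive that the displacement of $\widetilde{\Omega}$ is strictly positive, contradicting the assumption that $\widetilde{\Omega}$ is an irreducible $T$-submodule of $\mathcal{N}$ (such submodules have displacement $0$ by definition of $\mathcal{N}$ and Lemma~\ref{defN}).

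First I would recall that the displacement of $\widetilde{\Omega}$ is $r+t-D+d$, and compute this quantity using the conversion formulas in Lemma~\ref{conversion}. In case (C2), from (\ref{conversion2}) we get
\begin{equation*}
r+t-D+d=(\rho+\beta)+(\rho+\alpha+\beta)-D+(D-\rho-\alpha-\beta)=\rho+\beta.
\end{equation*}
In case (C3), from (\ref{conversion3}) we get
\begin{equation*}
r+t-D+d=(\rho+\beta)+(\rho+\alpha+\beta)-D+(N-D-\rho-2\beta)=\rho+\alpha+N-2D.
\end{equation*}
So in case (C2) the displacement equals $\rho+\beta$, and since case (C2) has $\beta-\alpha>0$, hence $\beta\ge 1$ (as $\alpha\ge 0$), the displacement is at least $1>0$. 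In case (C3) we have $\beta-\alpha>N-2D$, and combined with $\beta\le\frac{N-D-\rho}{2}$ from (\ref{abcondition}) and $N>2D$, one checks $\rho+\alpha+N-2D>0$: indeed $N-2D\ge 1$ already forces this since $\rho,\alpha\ge 0$. Either way the displacement is positive, which is the desired contradiction. Therefore $\beta-\alpha\le 0$, placing $\Omega$ in case (C1).

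The main obstacle, such as it is, is purely bookkeeping: one must be careful that the parameters $r,t,d,e$ attached to $\widetilde{\Omega}$ in Lemma~\ref{conversion} really are its endpoint, dual endpoint, and diameter in the sense used to define displacement — this is exactly what Lemma~\ref{tildelem} together with the discussion preceding Lemma~\ref{conversion} guarantees. One also wants to double-check the boundary cases: in (C2), the inequality $0<\beta-\alpha$ genuinely gives $\beta\ge 1$ because $\alpha$ and $\beta$ are integers with $\alpha\ge 0$; in (C3), the strict inequality $N-2D<\beta-\alpha$ together with $N-2D\ge 1$ suffices, and no subtler estimate is needed. I do not anticipate any real difficulty beyond substituting into the three displayed formulas and reading off signs; the content of the lemma is essentially that displacement $0$ eliminates cases (C2) and (C3).
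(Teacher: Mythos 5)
Your proof is correct and follows essentially the same strategy as the paper: assume case (C2) or (C3) and use the conversion formulas of Lemma \ref{conversion} to contradict the fact that an irreducible $T$-submodule of $\mathcal{N}$ has displacement zero. The only difference is cosmetic — the paper works with the equivalent conditions $r=t$ and $D=d+2r$ from \eqref{r=t} and derives a contradiction through several substitutions, whereas you substitute directly into $r+t-D+d$ and read off that the displacement equals $\rho+\beta$ in case (C2) and $\rho+\alpha+N-2D$ in case (C3), both manifestly positive; this is a slightly cleaner execution of the same idea.
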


\begin{proof}
    By Remark \ref{primary}, there exists an irreducible $\mathcal{H}$-module $\Omega$ such that $\widetilde{\Omega}$ is an irreducible $T$-submodule of the nucleus $\mathcal{N}$. It suffices to show that $\Omega$ satisfies case (C1) only. We prove this by contradiction. Suppose that case (C2) holds. Using the left equation of (\ref{r=t}) and the first two equations of (\ref{conversion2}), we obtain $\alpha=0$. Combining the first and third equations of (\ref{conversion2}), we obtain $d=D-r$. By the right equation of (\ref{r=t}), we have $d=D-2r$. Combining the comments above, we obtain $r=0$. Combining (\ref{abcondition}) and the first equation of (\ref{conversion2}), we have $\rho=0$ and $\beta=0$. This contradicts the condition for case (C2).

    Next suppose that case (C3) holds. Using the left equation of (\ref{r=t}) and the first two equations of (\ref{conversion3}), we obtain $\alpha=0$. By the right equation of (\ref{r=t}) we have $d=D-2\rho-2\beta$. Combining the previous equation and the third equation of (\ref{conversion3}), we have
    \begin{equation*}
        N-2D+\rho=0.
    \end{equation*}
    Since $N-2D>0$ by assumption and $\rho\geq 0$ by (\ref{abcondition}), we have a contradiction. Therefore, $\Omega$ satisfies case (C1) only. The result follows.
\end{proof}

\begin{lemma}
\label{DominantEquiv}
    Let $\Omega$ denote an irreducible $\mathcal{H}$-module. Then the following are equivalent:
    \begin{enumerate}[label=(\roman*)]
        \item $\Omega$ is alpha-dominant with gap $\alpha$;
        \item $\widetilde{\Omega}$ is an irreducible $T$-submodule of $\mathcal{N}$ with endpoint $\alpha$.
    \end{enumerate}
\end{lemma}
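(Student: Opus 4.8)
The plan is to prove the equivalence in Lemma~\ref{DominantEquiv} by establishing both implications, using the conversion formulas of Lemma~\ref{conversion} together with the characterization of displacement-zero modules in~\eqref{r=t}.

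\textbf{Proof that (i) implies (ii).} Suppose $\Omega$ is alpha-dominant with gap $\alpha$, so $\Omega$ has type $(\alpha,0,0)$ with $0\leq\alpha\leq\frac{D}{2}$. Since $\beta-\alpha=-\alpha\leq 0$, we are in case (C1), so by~\eqref{conversion1} the $T$-module $\widetilde{\Omega}$ has endpoint $r=\alpha$, dual endpoint $t=\alpha$, and diameter $d=D-2\alpha$. The displacement is $r+t-D+d=\alpha+\alpha-D+(D-2\alpha)=0$, so $\widetilde{\Omega}$ is an irreducible $T$-submodule of $\mathcal{N}$ with endpoint $\alpha$, as desired.

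\textbf{Proof that (ii) implies (i).} Suppose $\widetilde{\Omega}$ is an irreducible $T$-submodule of $\mathcal{N}$ with endpoint $\alpha$; write the type of $\Omega$ as $(\alpha',\beta,\rho)$. By Lemma~\ref{beta-alpha}, $\beta-\alpha'\leq 0$, so $\Omega$ is in case (C1), and by~\eqref{conversion1} the parameters of $\widetilde{\Omega}$ are $r=\rho+\alpha'$, $t=\rho+\alpha'+\beta$, $d=D-\rho-2\alpha'$. Since $\widetilde{\Omega}$ has displacement $0$, the left equation of~\eqref{r=t} gives $r=t$, hence $\beta=0$. Then the right equation $D=d+2r$ reads $D=(D-\rho-2\alpha')+2(\rho+\alpha')=D+\rho$, forcing $\rho=0$. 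Therefore $\Omega$ has type $(\alpha',0,0)$, i.e. $\Omega$ is alpha-dominant with gap $\alpha'$; and the endpoint of $\widetilde{\Omega}$ is $r=\alpha'$, so $\alpha'=\alpha$. This completes the proof.

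I do not anticipate a serious obstacle here: the heavy lifting has already been done in Lemma~\ref{beta-alpha}, which rules out cases (C2) and (C3), so what remains is a short bookkeeping argument with the (C1) conversion formulas and the two equations in~\eqref{r=t}. The one point requiring a little care is to confirm that the constraint $0\leq\alpha\leq\frac{D}{2}$ in the definition of alpha-dominant is automatically met: in the (i)$\Rightarrow$(ii) direction it is part of the hypothesis, while in the (ii)$\Rightarrow$(i) direction it follows from~\eqref{abcondition} applied with $\rho=\beta=0$, which gives $0\leq\alpha'\leq\frac{D}{2}$.
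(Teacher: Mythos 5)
Your proof is correct and follows essentially the same route as the paper: case (C1) via Lemma~\ref{beta-alpha}, the conversion formulas \eqref{conversion1}, and the displacement-zero conditions \eqref{r=t}. You simply make explicit the bookkeeping ($\beta=0$ from $r=t$, then $\rho=0$ from $D=d+2r$) that the paper's proof of (ii)$\Rightarrow$(i) leaves implicit, and the only citation you omit is Lemma~\ref{tildelem} for the irreducibility of $\widetilde{\Omega}$ in the forward direction.
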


\begin{proof}
    (i)$\Rightarrow$(ii) The fact that $\widetilde{\Omega}$ is an irreducible $T$-module follows from Lemma \ref{tildelem}. By assumption, the $\mathcal{H}$-module $\Omega$ has type $(\alpha,0,0)$. Note that $\Omega$ satisfies case (C1). Using the conversion in (\ref{conversion1}), we observe that the parameters of $\widetilde{\Omega}$ satisfy (\ref{r=t}). The result follows from the definition of $\mathcal{N}$ and the first equation of (\ref{conversion1}).

    (ii)$\Rightarrow$(i) By (\ref{r=t}), the $T$-module $\tilde{\Omega}$ has endpoint $\alpha$, dual endpoint $\alpha$, diameter $D-2\alpha$. By Lemma \ref{beta-alpha}, the $\mathcal{H}$-module $\Omega$ satisfies case (C1). By (\ref{conversion1}), the $\mathcal{H}$-module $\Omega$ has type $(\alpha,0,0)$. The result follows. 
\end{proof}

We use the following notation. 
For a $T$-module $W$, let $W^{\mathcal{H}}$ denote the subspace of $\Psi$ defined as the intersection of all $\mathcal{H}$-submodules $\Omega \subseteq \Psi$ with $\widetilde{\Omega} = W$.
Observe that $W^{\mathcal{H}}$ is an $\mathcal{H}$-module.

\begin{lemma}
\label{HW}
    Let $W$ denote an irreducible $T$-submodule of the nucleus $\mathcal{N}$ with endpoint $r$. Then $W^{\mathcal{H}}$ is an irreducible $\mathcal{H}$-module that is alpha-dominant with gap $r$.
\end{lemma}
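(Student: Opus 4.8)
The plan is to leverage Lemma \ref{DominantEquiv} together with the uniqueness results for irreducible $\mathcal{H}$-modules in Lemmas \ref{typelem} and \ref{unique}. First I would observe that, by definition, $W^{\mathcal{H}}$ is the intersection of all $\mathcal{H}$-submodules $\Omega\subseteq\Psi$ with $\widetilde{\Omega}=W$, so it suffices to produce one such $\Omega$ and then argue there is essentially only one. Since $W$ is an irreducible $T$-submodule of $\mathcal{N}$, Proposition \ref{endpoint} tells us $W$ is determined up to $T$-isomorphism by its endpoint $r$; and by Lemma \ref{defN} together with \eqref{r=t}, $W$ has dual endpoint $r$ and diameter $D-2r$. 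I would then invoke the fact (from the decomposition \eqref{direct} of $\Psi$ into irreducible $\mathcal{H}$-modules, combined with Lemma \ref{tildelem}) that $W$ arises as $\widetilde{\Omega}$ for at least one summand $\Omega$ in \eqref{direct}. By Lemma \ref{DominantEquiv}, (ii)$\Rightarrow$(i), any such $\Omega$ with $\widetilde{\Omega}=W$ is alpha-dominant with gap equal to the endpoint of $W$, namely $r$. In particular such an $\Omega$ has type $(r,0,0)$.

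Next I would establish that $W^{\mathcal{H}}$ is itself irreducible and alpha-dominant. The key point is that there is a \emph{unique} $\mathcal{H}$-submodule $\Omega\subseteq\Psi$ with $\widetilde{\Omega}=W$: by Lemma \ref{typelem}(ii), an $\mathcal{H}$-module of type $(r,0,0)$ has $\dim E^*_{i,j}\Omega\le 1$ throughout, and by Lemma \ref{unique} the isomorphism type is pinned down; moreover $\widetilde{\Omega}$ is obtained by restricting $\sum_{j=0}^{D}E^*_{D-j,j}\Omega$ to $V$, and one checks via \eqref{conversion1} that for type $(r,0,0)$ the "diagonal slice" $j\mapsto(D-j,j)$ meets the support rectangle of $\Omega$ in exactly the cells $(D-j,j)$ with $r\le D-j\le D-r$, so $\widetilde{\Omega}$ has dimension $D-2r+1=\dim W$ and the map $\Omega\mapsto\widetilde{\Omega}$ loses no information on a type-$(r,0,0)$ module. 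Hence distinct $\mathcal{H}$-summands of type $(r,0,0)$ give $\widetilde{\Omega}$'s that are linearly independent (they live in different $E_{i,j}^*$-refined pieces or are distinguished inside $\mathcal{N}_i$), so at most one summand of \eqref{direct} can satisfy $\widetilde{\Omega}=W$ for our fixed $W$. Therefore the intersection defining $W^{\mathcal{H}}$ is just that single summand $\Omega$, which is irreducible and alpha-dominant with gap $r$ by the previous paragraph.

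Alternatively, and perhaps more cleanly, I would argue as follows: since $W\subseteq\mathcal{N}$ is irreducible with endpoint $r$, and the summands $\Omega$ of \eqref{direct} that are alpha-dominant are in bijection (via $\Omega\mapsto\widetilde{\Omega}$, using Lemma \ref{DominantEquiv}) with the irreducible $T$-submodules of $\mathcal{N}$, there is exactly one alpha-dominant summand $\Omega_\star$ of \eqref{direct} with $\widetilde{\Omega_\star}=W$, and it has gap $r$. Any $\mathcal{H}$-submodule $\Omega\subseteq\Psi$ with $\widetilde{\Omega}=W$ must contain an irreducible $\mathcal{H}$-submodule $\Omega'$ with $\widetilde{\Omega'}\subseteq W$ nonzero; since $W$ is $T$-irreducible this forces $\widetilde{\Omega'}=W$, so $\Omega'$ is alpha-dominant with gap $r$, hence $\Omega'\cong\Omega_\star$ and in fact $\Omega'=\Omega_\star$ by the multiplicity-one feature of \eqref{direct} in the relevant $E_{i,j}^*$-components. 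Thus every $\mathcal{H}$-submodule with $\widetilde{\Omega}=W$ contains $\Omega_\star$, so $W^{\mathcal{H}}\supseteq\Omega_\star$; and $\Omega_\star$ itself is such a submodule, so $W^{\mathcal{H}}\subseteq\Omega_\star$. Hence $W^{\mathcal{H}}=\Omega_\star$, which is irreducible and alpha-dominant with gap $r$.

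The main obstacle I anticipate is the uniqueness/multiplicity-one claim: showing that there is genuinely only one $\mathcal{H}$-submodule of $\Psi$ (not merely up to isomorphism) whose image under $\Omega\mapsto\widetilde\Omega$ equals a prescribed $W\subseteq\mathcal{N}$. This requires knowing that the multiplicity of the type-$(r,0,0)$ isomorphism class in \eqref{direct} is controlled the right way and that the correspondence $\widetilde{(\ )}$ is injective on alpha-dominant summands — which should follow by comparing dimensions of $E_{i,j}^*\Psi$ (Lemma \ref{e*ijpsi}, as used in the proof of Lemma \ref{alphadominant}) against $\dim E_i^*\mathcal{N}$ from Proposition \ref{dimension}, so that the counts match exactly and no collapsing can occur. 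Once this bookkeeping is in place, the rest is a direct application of Lemma \ref{DominantEquiv}.
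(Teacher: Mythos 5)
Your final step (alpha-dominance of $W^{\mathcal{H}}$ via $\widetilde{W^{\mathcal{H}}}=W$ and Lemma \ref{DominantEquiv}) agrees with the paper, but the irreducibility of $W^{\mathcal{H}}$ is where your argument has a genuine gap. Everything hinges on the claim that all $\mathcal{H}$-submodules $\Omega\subseteq\Psi$ with $\widetilde{\Omega}=W$ contain one common irreducible $\mathcal{H}$-submodule --- equivalently, that there is exactly one \emph{irreducible} such $\Omega$ as a subspace of $\Psi$. If two distinct irreducible $\mathcal{H}$-submodules $\Omega\neq\Omega'$ both satisfied $\widetilde{\Omega}=\widetilde{\Omega'}=W$, their intersection would be $0$ by irreducibility, forcing $W^{\mathcal{H}}=0$ and falsifying the lemma. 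Your justification for ruling this out rests on a ``multiplicity-one feature'' that does not hold at the level you need: the type $(r,0,0)$ occurs in $\Psi$ with multiplicity $\mu_r=\binom{D}{r}_q-\binom{D}{r-1}_q$, which is generally larger than $1$, so the isotypic component contains many distinct (mutually isomorphic) irreducible copies, and Lemma \ref{unique} only pins down the isomorphism class, not the subspace. Likewise, the counting argument via Lemma \ref{e*ijpsi} and Proposition \ref{dimension} only matches the numbers of summands in one \emph{fixed} decomposition \eqref{direct} of $\Psi$ against one induced decomposition of $\mathcal{N}$; the $W$ in the lemma is an arbitrary irreducible $T$-submodule of $\mathcal{N}$, which need not be a summand of that induced decomposition, so the bijection you invoke does not apply to it, and no amount of dimension bookkeeping in a fixed decomposition rules out two distinct irreducible $\mathcal{H}$-submodules mapping onto the same $W$.

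You correctly flagged this as ``the main obstacle,'' but the sketch you give does not overcome it. The paper does not attempt to prove this uniqueness internally: it splits into the cases $r<\frac{D}{2}$ (so $d\geq 1$ by \eqref{r=t}) and $r=\frac{D}{2}$ (so $d=0$) and cites the structural results \cite[Theorems~5.7,~5.8]{LIW2020LAA} (together with $N>2D$) to conclude directly that $W^{\mathcal{H}}$ is irreducible. To repair your argument you would need either to import those theorems or to supply an independent proof that the map $\Omega\mapsto\widetilde{\Omega}$ is injective on irreducible $\mathcal{H}$-submodules landing in $\mathcal{N}$; neither is present in the proposal.
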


\begin{proof}
    We first show that $W^{\mathcal{H}}$ is an irreducible $\mathcal{H}$-module. Since $W$ has displacement $0$, we have $0 \leq r \leq \frac{D}{2}$. We first assume that $r<\frac{D}{2}$. By the right equation of (\ref{r=t}), we have $d\geq 1$. The irreducibility follows from \cite[Theorem~5.7]{LIW2020LAA} and the assumption that $N>2D$. 

    Next we assume that $r=\frac{D}{2}$. By the right equation of (\ref{r=t}), we have $d=0$. The irreducibility follows from \cite[Theorem~5.8]{LIW2020LAA} and the assumption that $N>2D$. 
    
    Next we show that $W^{\mathcal{H}}$ is alpha-dominant with gap $r$. Observe that $\widetilde{W^{\mathcal{H}}}=W$. The result follows from Lemma \ref{DominantEquiv}.  
\end{proof}

\begin{lemma}
\label{rdominant}
    Let $W$ denote an irreducible $T$-submodule of the nucleus $\mathcal{N}$ with endpoint $r$. Let $\Omega$ denote an irreducible $\mathcal{H}$-module that is alpha-dominant with gap $r$. In view of Lemma \ref{HW}, the following are equivalent: 
    \begin{enumerate}[label=(\roman*)]
        \item $\widetilde{\Omega}=W$;
        \item $\Omega=W^{\mathcal{H}}$.
    \end{enumerate} 
\end{lemma}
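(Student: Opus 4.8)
The plan is to establish the two implications (i)$\Leftrightarrow$(ii) separately, using nothing beyond the definition of $W^{\mathcal{H}}$ as the intersection of all $\mathcal{H}$-submodules $\Omega'\subseteq\Psi$ with $\widetilde{\Omega'}=W$, together with Lemma \ref{HW}, which tells us that $W^{\mathcal{H}}$ is an irreducible (hence nonzero) $\mathcal{H}$-module that is alpha-dominant with gap $r$.

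First I would record the auxiliary fact $\widetilde{W^{\mathcal{H}}}=W$. This is already noted in the proof of Lemma \ref{HW}, but it can also be recovered directly: since $W^{\mathcal{H}}$ is irreducible by Lemma \ref{HW}, it is not all of $\Psi$, so the family of $\mathcal{H}$-submodules $\Omega'\subseteq\Psi$ with $\widetilde{\Omega'}=W$ is nonempty, and $W^{\mathcal{H}}$ is contained in each such $\Omega'$. Because passing to $\widetilde{\,\cdot\,}$ respects inclusion, $\widetilde{W^{\mathcal{H}}}\subseteq\widetilde{\Omega'}=W$. Now $\widetilde{W^{\mathcal{H}}}$ is an irreducible $T$-module by Lemma \ref{tildelem}, and it is nonzero since $W^{\mathcal{H}}$ is alpha-dominant with gap $r\leq\frac{D}{2}$ (by Lemma \ref{typelem} together with the conversion (\ref{conversion1}), $\widetilde{W^{\mathcal{H}}}$ has diameter $D-2r$, hence positive dimension). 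A nonzero $T$-submodule of the irreducible $T$-module $W$ must equal $W$, so $\widetilde{W^{\mathcal{H}}}=W$.

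With this in hand, (ii)$\Rightarrow$(i) is immediate: if $\Omega=W^{\mathcal{H}}$, then $\widetilde{\Omega}=\widetilde{W^{\mathcal{H}}}=W$. For (i)$\Rightarrow$(ii), I would assume $\widetilde{\Omega}=W$. Then $\Omega$ is one of the $\mathcal{H}$-submodules $\Omega'\subseteq\Psi$ with $\widetilde{\Omega'}=W$, so by the very definition of $W^{\mathcal{H}}$ we get $W^{\mathcal{H}}\subseteq\Omega$. By Lemma \ref{HW}, $W^{\mathcal{H}}$ is a nonzero irreducible $\mathcal{H}$-module; being a nonzero $\mathcal{H}$-submodule of the irreducible $\mathcal{H}$-module $\Omega$, it must equal $\Omega$, so $\Omega=W^{\mathcal{H}}$.

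I do not expect a genuine obstacle: once Lemma \ref{HW} is available the argument is purely formal manipulation with intersections and irreducibility. The only point that calls for a little care is the equality $\widetilde{W^{\mathcal{H}}}=W$, and there the crucial input is precisely that $W^{\mathcal{H}}$ is nonzero — which is what the irreducibility assertion in Lemma \ref{HW} supplies, preventing the defining intersection from collapsing to $0$.
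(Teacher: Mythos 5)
Your proposal is correct and follows essentially the same route as the paper: both implications reduce to the containment $W^{\mathcal{H}}\subseteq\Omega$ coming from the definition of $W^{\mathcal{H}}$, together with the fact (noted in the proof of Lemma \ref{HW}) that $\widetilde{W^{\mathcal{H}}}=W$. The only cosmetic difference is in the last step of (i)$\Rightarrow$(ii): the paper invokes Lemma \ref{unique} to identify $\Omega$ with $W^{\mathcal{H}}$ via their common type $(r,0,0)$, whereas you conclude more directly from the irreducibility of $\Omega$ that its nonzero submodule $W^{\mathcal{H}}$ must be all of $\Omega$ --- an equally valid, and arguably cleaner, finish.
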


\begin{proof}
(i)$\Rightarrow$(ii)
    By Lemma \ref{HW}, the $\mathcal{H}$-module $W^{\mathcal{H}}$ is irreducible. It suffices to show that $\Omega=W^{\mathcal{H}}$ is the only irreducible $\mathcal{H}$-module that is alpha-dominant with gap $r$ such that $\widetilde{\Omega}=W$. Let $\Omega'$ denote another such $\mathcal{H}$-module. By definition, the $\mathcal{H}$-module $\Omega'$ contains $W^{\mathcal{H}}$. Since both $\Omega'$ and $W^{\mathcal{H}}$ have type $(r,0,0)$, the $\mathcal{H}$-modules $\Omega'$ and $W^{\mathcal{H}}$ are isomorphic by Lemma \ref{unique}. Hence $\Omega'=W^{\mathcal{H}}$. The result follows.

    (ii)$\Rightarrow$(i) Clear.
\end{proof}

\begin{lemma}
\label{multr}
    The multiplicity of the irreducible $T$-submodules of $\mathcal{N}$ with endpoint $r$ is equal to 
    \begin{equation*}
        \mult_r=\binom{D}{r}_q-\binom{D}{r-1}_q.
    \end{equation*}
\end{lemma}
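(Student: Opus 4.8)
The plan is to reduce the statement to Lemma~\ref{alphadominant} by showing that $\mult_r$ equals the integer $\mu_r=\binom{D}{r}_q-\binom{D}{r-1}_q$ defined there, namely the number of alpha-dominant summands of gap $r$ in the fixed decomposition \eqref{direct} of $\Psi$. The strategy is to produce one concrete decomposition of $\mathcal{N}$ into irreducible $T$-modules in which the summands of endpoint $r$ are exactly the $\widetilde{\Omega}$ with $\Omega$ alpha-dominant of gap $r$, and then invoke that $\mult_r$ is independent of the chosen decomposition (this uses Proposition~\ref{endpoint}, which says an irreducible $T$-submodule of $\mathcal{N}$ is determined up to isomorphism by its endpoint, together with semisimplicity of $T$-modules).

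First I would fix the decomposition \eqref{direct}, $\Psi=\bigoplus_{\Omega}\Omega$, and push it down to $V$. Since each $E^*_{i,j}$ lies in $\mathcal{K}\subseteq\mathcal{H}$ and hence preserves the summands of \eqref{direct}, restricting the diagonal subspace $\sum_{j=0}^{D}E^*_{D-j,j}\Psi\cong V$ summand-by-summand gives $V=\bigoplus_{\Omega}\widetilde{\Omega}$; by Lemma~\ref{tildelem} each $\widetilde{\Omega}$ is an irreducible $T$-module, and distinct summands $\Omega$ contribute independent summands $\widetilde{\Omega}$, so this is a decomposition of the standard module $V$ into irreducible $T$-modules. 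Let $\mathcal{S}$ be the set of summands $\Omega$ in \eqref{direct} that are alpha-dominant. I claim that
\begin{equation*}
\mathcal{N}=\bigoplus_{\Omega\in\mathcal{S}}\widetilde{\Omega}.
\end{equation*}
The inclusion $\supseteq$ is immediate from Lemma~\ref{DominantEquiv}(i)$\Rightarrow$(ii): for $\Omega\in\mathcal{S}$ the module $\widetilde{\Omega}$ is an irreducible $T$-submodule of $\mathcal{N}$. For the inclusion $\subseteq$, recall from Lemma~\ref{defN} that $\mathcal{N}$ is spanned by the irreducible $T$-submodules $W$ of $V$ with displacement $0$. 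Since $V=\bigoplus_{\Omega}\widetilde{\Omega}$ is semisimple, any such $W$ lies in the sum of those $\widetilde{\Omega}$ with $\widetilde{\Omega}\cong W$; but if $\widetilde{\Omega}\cong W$ then $\widetilde{\Omega}$ also has displacement $0$ (the displacement $r+t-D+d$ is an isomorphism invariant), hence $\widetilde{\Omega}\subseteq\mathcal{N}$, and then Lemma~\ref{DominantEquiv}(ii)$\Rightarrow$(i) forces $\Omega$ to be alpha-dominant, i.e.\ $\Omega\in\mathcal{S}$. Thus $W\subseteq\bigoplus_{\Omega\in\mathcal{S}}\widetilde{\Omega}$, and summing over such $W$ yields the claim.

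Finally I would count. In the decomposition $\mathcal{N}=\bigoplus_{\Omega\in\mathcal{S}}\widetilde{\Omega}$, Lemma~\ref{DominantEquiv} shows that $\widetilde{\Omega}$ has endpoint $r$ exactly when $\Omega$ is alpha-dominant with gap $r$; by Lemma~\ref{alphadominant} the number of such $\Omega\in\mathcal{S}$ is $\binom{D}{r}_q-\binom{D}{r-1}_q$. Since $\mult_r$ (Definition~\ref{multrdef}) is the number of summands of endpoint $r$ in any decomposition of the semisimple module $\mathcal{N}$ into irreducibles, and summands of endpoint $r$ are exactly the summands isomorphic to the endpoint-$r$ type by Proposition~\ref{endpoint}, we conclude $\mult_r=\binom{D}{r}_q-\binom{D}{r-1}_q$.

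I expect the main obstacle to be the inclusion $\mathcal{N}\subseteq\bigoplus_{\Omega\in\mathcal{S}}\widetilde{\Omega}$, i.e.\ ruling out contributions to the nucleus from irreducible $\mathcal{H}$-modules that are not alpha-dominant; the key point that makes it go through is that displacement is preserved under $T$-module isomorphism, so that ``$\widetilde{\Omega}\subseteq\mathcal{N}$'' depends only on the isomorphism type of $\widetilde{\Omega}$, hence via Lemma~\ref{DominantEquiv} only on whether $\Omega$ is alpha-dominant. As a cross-check, one can alternatively combine the easy inclusion $\bigoplus_{\Omega\in\mathcal{S}}\widetilde{\Omega}\subseteq\mathcal{N}$ with Proposition~\ref{dimension}: since the $\widetilde{\Omega}$ of gap $\alpha$ are thin with endpoint $\alpha$ and diameter $D-2\alpha$, computing $\dim E^*_i\mathcal{N}$ from this sum gives $\sum_{\alpha\le i}\bigl(\binom{D}{\alpha}_q-\binom{D}{\alpha-1}_q\bigr)=\binom{D}{i}_q$ for $0\le i\le \tfrac{D}{2}$, which telescopes against $\dim E^*_i\mathcal{N}=\sum_{r=0}^{i}\mult_r$ to recover the formula.
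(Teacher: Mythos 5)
Your proposal is correct and takes essentially the same route as the paper: both arguments reduce the count to Lemma~\ref{alphadominant} via the correspondence between alpha-dominant irreducible $\mathcal{H}$-modules of gap $r$ and irreducible $T$-submodules of $\mathcal{N}$ with endpoint $r$. The only difference is that the paper invokes Lemma~\ref{rdominant} (via the $W^{\mathcal{H}}$ construction) to get this bijection, whereas you obtain it directly from Lemma~\ref{DominantEquiv} together with a careful semisimplicity/isotypic-component argument that also fills in the multiplicity bookkeeping the paper leaves implicit.
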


\begin{proof}
    By Lemma \ref{rdominant}, there is a one-to-one correspondence between (i) the irreducible $\mathcal{H}$-submodules of $\Psi$ that are alpha-dominant with gap $r$, and (ii) the irreducible $T$-submodules of $\mathcal{N}$ with endpoint $r$. The result follows from Lemma \ref{alphadominant}.
\end{proof}

\begin{theorem}
\label{Ndimension}
    The dimension of the nucleus $\mathcal{N}$ is equal to 
    \begin{equation*}
        \dim \mathcal{N}=\sum_{i=0}^{D}\binom{D}{i}_q.
    \end{equation*}
\end{theorem}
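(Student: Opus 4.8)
The plan is to compute $\dim\mathcal{N}$ by decomposing the nucleus along the dual idempotents $E^*_j$ and then invoking Proposition~\ref{dimension} together with the multiplicity formula from Lemma~\ref{multr}. Since $\mathcal{N}$ is a $T$-module and $M^*\subseteq T$, the nucleus is invariant under each $E^*_j$; as $I=\sum_{j=0}^{D}E^*_j$ with $E^*_jE^*_k=\delta_{j,k}E^*_j$, this gives the direct sum decomposition $\mathcal{N}=\sum_{j=0}^{D}E^*_j\mathcal{N}$, whence $\dim\mathcal{N}=\sum_{j=0}^{D}\dim E^*_j\mathcal{N}$.

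Next I would evaluate $\dim E^*_j\mathcal{N}$ for every $j$. For $0\le i\le \frac{D}{2}$, Proposition~\ref{dimension} gives $\dim E^*_i\mathcal{N}=\dim E^*_{D-i}\mathcal{N}=\sum_{r=0}^{i}\mult_r$. Substituting the value $\mult_r=\binom{D}{r}_q-\binom{D}{r-1}_q$ from Lemma~\ref{multr} and using $\binom{D}{-1}_q=0$, the sum telescopes:
\[
\sum_{r=0}^{i}\mult_r=\sum_{r=0}^{i}\left(\binom{D}{r}_q-\binom{D}{r-1}_q\right)=\binom{D}{i}_q .
\]
Hence $\dim E^*_i\mathcal{N}=\binom{D}{i}_q$ and $\dim E^*_{D-i}\mathcal{N}=\binom{D}{i}_q=\binom{D}{D-i}_q$ for $0\le i\le\frac{D}{2}$. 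As $i$ ranges over $0,1,\dots,\lfloor D/2\rfloor$, the indices $i$ and $D-i$ together exhaust $\{0,1,\dots,D\}$, with the only possible overlap occurring when $D$ is even and $i=D-i=\frac{D}{2}$, in which case the two expressions agree. Therefore $\dim E^*_j\mathcal{N}=\binom{D}{j}_q$ for all $0\le j\le D$.

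Finally, summing over $j$ yields $\dim\mathcal{N}=\sum_{j=0}^{D}\binom{D}{j}_q$, as claimed. There is essentially no obstacle here beyond bookkeeping: the content is carried entirely by Proposition~\ref{dimension} (which supplies both $\dim E^*_i\mathcal{N}$ and its mirror $\dim E^*_{D-i}\mathcal{N}$) and by Lemma~\ref{multr}, and the only point requiring a moment's care is checking that the pairing $i\leftrightarrow D-i$ covers each subconstituent index exactly once. An alternative route would instead use the thinness of $\Gamma$ to write $\dim\mathcal{N}=\sum_{r}\mult_r\,(D-2r+1)$ and then resum using the $q$-binomial identities collected in the appendix, but the decomposition along the $E^*_j$ is shorter and self-contained.
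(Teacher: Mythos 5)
Your proof is correct and follows essentially the same route as the paper: both arguments reduce to Proposition \ref{dimension} plus the telescoping of $\mult_r=\binom{D}{r}_q-\binom{D}{r-1}_q$ from Lemma \ref{multr}, together with the pairing $i\leftrightarrow D-i$. The only (immaterial) difference is that you decompose $\mathcal{N}$ along the dual idempotents as $\sum_{j}E^*_j\mathcal{N}$, whereas the paper uses the direct sum $\mathcal{N}=\mathcal{N}_0+\cdots+\mathcal{N}_D$ of Lemma \ref{defN}; Proposition \ref{dimension} assigns both families the same dimensions, so the two bookkeepings coincide.
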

\begin{proof}
    Pick an integer $i$ such that $0\leq i\leq D$. If $0\leq i\leq \frac{D}{2}$, then by Proposition \ref{dimension} and Lemma \ref{multr},
    \begin{equation}
    \label{dimN_i}
        \dim \mathcal{N}_i=\sum_{r=0}^{i}\mult_r=\sum_{r=0}^{i}\Biggl(\binom{D}{r}_q-\binom{D}{r-1}_q\Biggr)=\binom{D}{i}_q.
    \end{equation}
    If $\frac{D}{2}< i\leq D$, then by Proposition \ref{dimension} and (\ref{dimN_i})
    \begin{equation}
    \label{dimN_D-i}
        \dim \mathcal{N}_i=\dim\mathcal{N}_{D-i}=\binom{D}{D-i}_q=\binom{D}{i}_q.
    \end{equation}

    Combine  (\ref{dimN_i}), (\ref{dimN_D-i}) and Lemma \ref{defN} to obtain the result.
\end{proof}

\section{The subgraph induced on $\Gamma_i(x)$ and the vectors $\alpha^{\vee}, \alpha^{\mathcal{N}}$}
Recall the Grassmann graph $\Gamma=J_q(N,D)$. 
Pick $x\in X$ and an integer $i$ ($0\leq i\leq D$). In this section we discuss the structure of the subgraph induced on $\Gamma_i(x)$, which will be useful for describing the nucleus $\mathcal{N}$. For each $\alpha\subseteq x$, we define vectors $\alpha^{\vee}, \alpha^{\mathcal{N}}$ in $V$; as we will see in Section \ref{somebases}, these vectors are related to $\mathcal{N}$.

\begin{lemma}
\label{y cap z}
    Pick adjacent vertices $y,z\in \Gamma_i(x)$. Then the subspace $y\cap z$ is either: (i) $/$-covered by each of $y$ and $z$, or (ii) $\backslash$-covered by each of $y$ and $z$.
\end{lemma}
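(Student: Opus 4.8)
The plan is to work directly with the subspaces $y, z \in X = P_D$ and the subspace $w = y \cap z$, and show that $w$ is covered by $y$ and $z$ in the same "direction." First I would recall that since $y, z$ are adjacent in $\Gamma = J_q(N,D)$, we have $w = y \cap z \in P_{D-1}$, so $y$ covers $w$ and $z$ covers $w$. Next, since $y, z \in \Gamma_i(x)$, we have $\dim(y \cap x) = \dim(z \cap x) = D - i$; say $y \in P_{D-i,\,i}$ and $z \in P_{D-i,\,i}$ in the notation of \eqref{partition2}. Write $w \in P_{a,b}$ with $a + b = D - 1$, where $a = \dim(w \cap x)$. Since $y$ covers $w$, Lemma~\ref{coverlem} gives that $y$ either $\slash$-covers $w$ (so $a = D - i - 1$, $b = i$) or $\backslash$-covers $w$ (so $a = D-i$, $b = i - 1$); the same dichotomy applies to $z$. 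The claim is that the same one of these two options holds for both $y$ and $z$, which is immediate once we observe that the option is determined entirely by the single integer $a = \dim(w \cap x)$: if $a = D - i - 1$ then both $y$ and $z$ must $\slash$-cover $w$, and if $a = D - i$ then both must $\backslash$-cover $w$.

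So the core of the argument reduces to checking that $a = \dim(w \cap x)$ is well-defined for $w$ and takes one of exactly these two values, which follows from applying Lemma~\ref{coverlem} to the cover relation $w \subseteq y$ (and separately $w \subseteq z$), together with the fact that $w \cap x \subseteq y \cap x$ has dimension either equal to $\dim(y \cap x)$ or one less. Concretely: $w \cap x = w \cap (y \cap x)$ since $w \subseteq y$, and $w$ has codimension $1$ in $y$, so $\dim(w \cap x) \in \{\dim(y \cap x) - 1,\ \dim(y \cap x)\} = \{D - i - 1,\ D - i\}$. The direction of the cover $w \subseteq y$ is $\slash$ precisely when $\dim(w \cap x) = D - i - 1$ and $\backslash$ precisely when $\dim(w \cap x) = D - i$, by Definition~\ref{slashcover}. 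The identical statement holds verbatim with $z$ in place of $y$, since $\dim(z \cap x) = D - i$ as well. Hence the two directions agree, giving case~(i) when $\dim(w \cap x) = D-i-1$ and case~(ii) when $\dim(w \cap x) = D - i$.

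I do not anticipate a serious obstacle here; the only subtlety is making sure the bookkeeping of which $P_{a,b}$ each of $w, y, z$ lies in is consistent, and in particular that $y$ and $z$ lie in the \emph{same} cell $P_{D-i,i}$ (this uses $\partial(x,y) = \partial(x,z) = i$, equivalently $\dim(x \cap y) = \dim(x \cap z) = D - i$, which is the characterization of the distance-$i$ subconstituent in the Grassmann graph). Once that is in place, the two cases for $w$ are forced by the single value $\dim(w \cap x)$, and Lemma~\ref{coverlem} applied to both covers $w \subseteq y$ and $w \subseteq z$ yields the conclusion. I would write this up in a few lines: establish $w \in P_{D-1}$ and the cell memberships of $y, z$; compute the two possibilities for $\dim(w \cap x)$; invoke Definition~\ref{slashcover} to read off the cover direction in each case; and conclude that $y, z$ cover $w$ in the same direction.
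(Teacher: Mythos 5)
Your proposal is correct and follows essentially the same route as the paper: both observe that $y,z\in P_{D-i,i}$, that $y\cap z$ is covered by each of $y,z$, and then apply Lemma~\ref{coverlem}, with the cover direction forced by the single cell $P_{a,b}$ containing $y\cap z$. Your write-up merely makes explicit the bookkeeping that the paper leaves implicit.
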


\begin{proof}
    Note that $\Gamma_i(x)=P_{D-i,i}$. Hence, both $y$ and $z$ are contained in $P_{D-i,i}$. Also note that $y\cap z$ is covered by both $y$ and $z$. The result follows from Lemma \ref{coverlem}.
\end{proof}

We define the graph $\gamma_i(x)$ as follows: from the subgraph induced on $\Gamma_i(x)$, remove all the edges that satisfy type (i) in Lemma \ref{y cap z}. Our next goal is to describe the structure of $\gamma_i(x)$. We first present a useful lemma.

\begin{lemma}
\label{Dth}
    The subgraph of $\Gamma$ induced on $\Gamma_D(x)$ is connected.
\end{lemma}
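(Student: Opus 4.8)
The plan is to show that any two vertices in $\Gamma_D(x)$ can be joined by a path within $\Gamma_D(x)$. Recall $\Gamma_D(x)=P_{0,D}$, i.e.\ the set of $D$-dimensional subspaces $y$ of $\mathcal{V}$ with $y\cap x=0$. Since $N>2D$, such subspaces exist and in fact $N-D\geq D$, which gives us room to maneuver. First I would fix $y,z\in\Gamma_D(x)$ and argue by downward induction on $\dim(y\cap z)$: if $\dim(y\cap z)=D$ then $y=z$ and we are done, so suppose $\dim(y\cap z)=m<D$ and assume any two vertices of $\Gamma_D(x)$ whose intersection has dimension greater than $m$ are connected in $\Gamma_D(x)$. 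It then suffices to produce a vertex $y'\in\Gamma_D(x)$ adjacent to $y$ with $\dim(y'\cap z)=m+1$; applying the inductive hypothesis to $y'$ and $z$ finishes the argument, since $y$ and $y'$ are adjacent in $\Gamma_D(x)$.

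The construction of $y'$ is the technical heart. I would write $y=(y\cap z)\oplus U$ where $\dim U=D-m\geq 1$, pick a vector $w\in z\setminus(y\cap z)$, pick a hyperplane $H$ of $U$, and set $y'=(y\cap z)\oplus H\oplus\langle w\rangle$. By construction $\dim y'=D$ and $\dim(y\cap y')\geq D-1$, so $y,y'$ are adjacent (equality holds as long as $w\notin y$, which we may arrange). The key point requiring care is to choose $w$ and $H$ so that $y'\cap x=0$; this is where the hypothesis $N>2D$ matters, because it guarantees enough "generic" directions outside $x$. Concretely, the set of bad choices of $w$ (those forcing a nonzero intersection of $y'$ with $x$) is contained in a union of proper subspaces of $z$ whose total size is bounded away from $|z|$ when $N$ is large relative to $D$; a counting argument then produces a valid $w$. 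I also need $\dim(y'\cap z)=m+1$, which follows since $y'\cap z\supseteq(y\cap z)\oplus\langle w\rangle$ and the reverse containment holds because $H$ meets $z$ trivially for a generic choice — again a counting estimate.

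The main obstacle I anticipate is the simultaneous satisfaction of the three constraints on $y'$: adjacency to $y$, trivial intersection with $x$, and strictly larger intersection with $z$. Each is individually easy, but ruling out the "bad" events all at once requires the inequality $N>2D$ (or at least $N\geq 2D$) to make the relevant counting go through; I would isolate this as a short sub-lemma stating that given subspaces $x,z$ with $\dim x=\dim z=D$, $x\cap z=0$, inside $\mathcal{V}$ of dimension $N>2D$, one can find $w\in z$ and a hyperplane $H$ of a fixed complement of $y\cap z$ in $y$ so that the resulting $y'$ has the desired properties. Alternatively, one could invoke the known fact that the subgraph of the Grassmann graph induced on $P_{0,D}$ — which is itself (a quotient related to) an attenuated-space or bilinear-forms type structure — is connected; but I would prefer the self-contained inductive/counting argument above so as not to import external machinery.
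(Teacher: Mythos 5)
Your proof is correct in outline, but it takes a genuinely different route from the paper. The paper's argument is spectral: it notes that the induced subgraph on $\Gamma_D(x)$ is regular with valency $a_D$, identifies its adjacency matrix with the restriction of $E_D^*AE_D^*$, uses the decomposition of $V$ into irreducible $T$-modules to conclude that the eigenvalues of this matrix are the scalars $a_d(W)$ for modules $W$ with $r+d=D$, and then invokes the inequality $a_d(W)\leq a_{r+d}=a_D$ (with equality only for the primary module $W_0$, which is thin) to show the valency eigenvalue has multiplicity $1$, whence connectivity follows from Biggs. Your argument is elementary and self-contained: downward induction on $\dim(y\cap z)$, constructing at each step a neighbour $y'=(y\cap z)\oplus H\oplus\langle w\rangle$ of $y$ inside $\Gamma_D(x)$ with $\dim(y'\cap z)$ increased by one. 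This buys independence from the Terwilliger-algebra machinery (and in fact works already at $N=2D$), at the cost of the linear-algebra verification; the paper's route is shorter given the machinery already in place and yields the extra information about which modules attain $a_D$. Two small corrections to your sketch: the conditions $w\notin y$, $\dim y'=D$, $\dim(y\cap y')=D-1$, and $y'\cap z=(y\cap z)\oplus\langle w\rangle$ are all automatic from $w\in z\setminus(y\cap z)$ and $H\subseteq U\subseteq y$ (since $H\cap z\subseteq U\cap(y\cap z)=0$), so no genericity is needed there; and the one genuine constraint, $y'\cap x=0$, amounts to $w\notin x\oplus(y\cap z)\oplus H$, where the correct reason a good pair $(H,w)$ exists is not really the size of $N$ but the fact that $z\cap x=0$: if every $w\in z\setminus(y\cap z)$ were bad for every hyperplane $H$ of $U$, then $z\subseteq\bigcap_H\bigl(x\oplus(y\cap z)\oplus H\bigr)=x\oplus(y\cap z)$, and projecting onto $y\cap z$ along $x$ would give an injection of $z$ into $y\cap z$, contradicting $\dim(y\cap z)<D$. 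With that sub-lemma made explicit (note you may have to vary $H$, not just $w$, since for a fixed $H$ it can happen that $z$ lies entirely inside $x\oplus(y\cap z)\oplus H$), your proof goes through.
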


\begin{proof}
Observe that $\Gamma_D(x)$ is regular with valency $a_D$, which is the largest eigenvalue of $\Gamma_D(x)$.  
In view of \cite[Proposition~3.1]{Biggs}, it suffices to show that $a_D$ has multiplicity $1$.  
Let $\mathcal{A}$ denote the adjacency matrix of the induced subgraph on $\Gamma_D(x)$.  
By construction, $\mathcal{A}$ is the restriction of $E_D^* A E_D^*$ to $\Gamma_D(x)$.  
Applying Lemma~\ref{EAE} with $r = D-d$ and $i = d$, we find that the eigenvalues of $\mathcal{A}$ are precisely the scalars $a_d(W)$, where $W$ runs over irreducible $T$-modules with endpoint $r$ and diameter $d$ such that $r+d=D$.  
In particular, $a_D$ arises from the primary $T$-module $W_0$.  
By Lemma~\ref{acomparison}, each $a_d(W)$ satisfies 
\begin{equation}\label{pf eq:Dth}
	a_d(W) \leq a_{r+d} = a_D.
\end{equation}
Using \eqref{kappa}, \eqref{sizebc} and setting $i=d$ in \eqref{aW}, we verify that the equality in \eqref{pf eq:Dth} holds if and only if $r=0$ and $d=D$.  
Thus only $W_0$ attains the value $a_D$ on $E_D^*W_0$.  
Since $W_0$ is thin, we have $\dim E_D^*W_0=1$.  
Therefore $a_D$ has multiplicity $1$. The result follows.
\end{proof}

Pick $\alpha\subseteq x$ such that $\dim \alpha=D-i$. Define 
\begin{equation}
\label{definegh}
    G_{\alpha}=\{y\in X\mid y\cap x=\alpha\}, \qquad \qquad H_{\alpha}=\{y\in X\mid \alpha\subseteq y\}. 
\end{equation}

\begin{remark}
\label{isomorphic}
    Observe that the subgraph induced on $H_{\alpha}$ is isomorphic to the graph $J_q(N-D+i,i)$; the isomorphism is obtained by taking the quotient of each subspace in $H_{\alpha}$ by $\alpha$. The set $G_{\alpha}$ is the $i$-th subconstituent of $H_{\alpha}$ with respect to $x$.
\end{remark}

\begin{lemma}
    The subgraph induced on $G_{\alpha}$ is connected.
\end{lemma}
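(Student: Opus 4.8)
The plan is to reduce the connectedness of the subgraph induced on $G_\alpha$ to the already-established connectedness of a top subconstituent. By Remark \ref{isomorphic}, the subgraph induced on $H_\alpha$ is isomorphic to the Grassmann graph $J_q(N-D+i, i)$, with $\alpha$ playing the role of the zero subspace and $x$ playing the role of a distinguished vertex at distance $i$; under this isomorphism $G_\alpha$ is the $i$-th subconstituent of $H_\alpha$ with respect to $x$. Since $i$ is exactly the diameter of $J_q(N-D+i, i)$ (the hypothesis $N > 2D$ gives $N-D+i > 2i$, so the diameter of $J_q(N-D+i,i)$ equals $i$ by the same computation that gives the diameter of $\Gamma$), the set $G_\alpha$ is the \emph{top} subconstituent of a Grassmann graph of the required form.

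First I would make the isomorphism of Remark \ref{isomorphic} precise: the map $y \mapsto y/\alpha$ is a bijection from $H_\alpha$ to the set of $i$-dimensional subspaces of $\mathcal{V}/\alpha$, it preserves dimensions of intersections, and hence it is a graph isomorphism from the subgraph induced on $H_\alpha$ onto $J_q(N-D+i, i)$; moreover $y \cap x = \alpha$ translates into $(y/\alpha) \cap (x/\alpha) = 0$, i.e. $\partial(x/\alpha, y/\alpha) = i$ in $J_q(N-D+i,i)$. So $G_\alpha$ maps onto the $i$-th subconstituent of $J_q(N-D+i,i)$ with respect to the vertex $x/\alpha$. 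Next I would invoke Lemma \ref{Dth}, applied not to $\Gamma$ itself but to the Grassmann graph $J_q(N-D+i, i)$ with diameter $i$: it asserts that the top subconstituent of that graph is connected. (One should check that the hypotheses under which Lemma \ref{Dth} was proved — a $Q$-polynomial distance-regular Grassmann graph $J_q(N',D')$ with $N' > 2D'$ — are met with $N' = N-D+i$ and $D' = i$; the inequality $N-D+i > 2i$ is equivalent to $N > D + i$, which holds since $N > 2D \geq D + i$.) Finally, since connectedness is a graph-isomorphism invariant, pulling back along the isomorphism shows that the subgraph induced on $G_\alpha$ is connected.

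The main subtlety is not any hard computation but rather the bookkeeping of applying an earlier lemma to a \emph{different} member of the Grassmann family: I must confirm that $J_q(N-D+i,i)$ is again a Grassmann graph satisfying all the standing hypotheses (in particular $N-D+i > 2i$, so that its diameter is indeed $i$ and the "top subconstituent" language is correct), and that Lemma \ref{Dth} — whose proof used the Terwilliger-algebra machinery, the intersection-number formulas \eqref{aW}, and Lemma \ref{acomparison} — was stated for an arbitrary such graph and not merely for the fixed $\Gamma$ of this paper. If there is any worry that Lemma \ref{Dth} was phrased only for $\Gamma$, the cleanest fix is to note that its proof goes through verbatim for any Grassmann graph $J_q(N',D')$ with $N' > 2D'$, since every ingredient is a general fact about such graphs; I would remark on this explicitly. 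The edge case $i = 0$ is trivial ($G_\alpha = \{x\}$), and the case where $\dim\alpha$ forces $G_\alpha$ to be a single vertex is likewise immediate, so the argument above covers all $0 \le i \le D$.
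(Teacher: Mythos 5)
Your proposal is correct and follows essentially the same route as the paper: the paper's proof is simply to combine Remark \ref{isomorphic} (identifying the subgraph on $H_\alpha$ with $J_q(N-D+i,i)$ and $G_\alpha$ with its top subconstituent) with Lemma \ref{Dth}. Your additional bookkeeping --- verifying $N-D+i>2i$ so that the diameter of $J_q(N-D+i,i)$ is indeed $i$ and Lemma \ref{Dth} applies to that graph --- is a reasonable elaboration of details the paper leaves implicit.
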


\begin{proof}
    Combine Lemma \ref{Dth} and Remark \ref{isomorphic}.
\end{proof}
\begin{lemma}
\label{connected}
    The graph $\gamma_i(x)$ consists of $\binom{D}{i}_q$ connected components, where each component is a subgraph induced on $G_{\alpha}$ for $\alpha\subseteq x$, $\dim \alpha=D-i$.
\end{lemma}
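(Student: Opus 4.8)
The plan is to show that $\gamma_i(x)$ decomposes as the disjoint union of the subgraphs induced on the sets $G_{\alpha}$, as $\alpha$ ranges over the $(D-i)$-dimensional subspaces of $x$; the count $\binom{D}{i}_q$ then follows from Lemma \ref{qbinom}. There are two things to verify: first, that the vertex sets $G_\alpha$ partition $\Gamma_i(x)$; second, that $\gamma_i(x)$ has no edges between distinct $G_\alpha$'s, while each induced subgraph on $G_\alpha$ is connected. The last point is exactly the preceding lemma, so the real work is the first two points.

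First I would observe that $\Gamma_i(x) = P_{D-i,i}$, so every $y \in \Gamma_i(x)$ satisfies $\dim(y \cap x) = D-i$; setting $\alpha = y \cap x$ gives $\alpha \subseteq x$ with $\dim\alpha = D-i$ and $y \in G_\alpha$. Moreover $y$ lies in exactly one $G_\alpha$, namely the one with $\alpha = y\cap x$, so $\{G_\alpha \mid \alpha \subseteq x, \ \dim\alpha = D-i\}$ is a partition of the vertex set of $\gamma_i(x)$, and by Lemma \ref{qbinom} there are $\binom{D}{i}_q$ parts (some possibly empty — but since $N > 2D$ each $G_\alpha$ is nonempty, as there is room to extend $\alpha$ to a $D$-space meeting $x$ in exactly $\alpha$).

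Next I would show that every edge of $\gamma_i(x)$ stays within a single part. Let $y,z$ be adjacent in $\gamma_i(x)$. By construction of $\gamma_i(x)$ the edge $yz$ is not of type (i) in Lemma \ref{y cap z}, so it is of type (ii): the subspace $y \cap z$ is $\backslash$-covered by each of $y$ and $z$. Writing $y \cap z \in P_{\ell,n}$, the $\backslash$-covering condition (Definition \ref{slashcover}, case (ii) of Lemma \ref{coverlem}) forces $\dim(y \cap x) = \dim((y\cap z) \cap x) = \ell$ and likewise $\dim(z\cap x) = \ell$; since $y,z \in \Gamma_i(x)$ we already know $\ell = D-i$. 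More importantly, $\backslash$-covering means $(y \cap z) \cap x = y \cap x$ and $(y\cap z)\cap x = z\cap x$ as subspaces (the intersection with $x$ does not grow when passing from $y\cap z$ up to $y$ or $z$), hence $y \cap x = (y\cap z)\cap x = z \cap x$. Thus $y$ and $z$ lie in the same $G_\alpha$ with $\alpha = y\cap x = z\cap x$. Conversely, every edge of $\Gamma$ inside a fixed $G_\alpha$ survives in $\gamma_i(x)$: if $y,z \in G_\alpha$ are adjacent in $\Gamma$ then $\alpha \subseteq y \cap z$, so $(y\cap z)\cap x \supseteq \alpha$ and in fact equals $\alpha$ (it is contained in $y\cap x = \alpha$), which means $y\cap z$ is $\backslash$-covered by $y$ and $z$, i.e. the edge is of type (ii) and is not removed. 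Therefore the induced subgraph on $G_\alpha$ in $\gamma_i(x)$ coincides with the subgraph of $\Gamma$ induced on $G_\alpha$, which is connected by the previous lemma.

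Combining these observations: $\gamma_i(x)$ is the disjoint union of the nonempty connected subgraphs induced on the $G_\alpha$, one for each $(D-i)$-dimensional $\alpha \subseteq x$, and there are $\binom{D}{i}_q$ of them. I expect the main (mild) obstacle to be bookkeeping with the $\backslash$-covering relation — specifically, making airtight the claim that $\backslash$-covering of $y\cap z$ by $y$ forces $(y\cap z)\cap x = y\cap x$, which is where one uses that $\backslash$-covering adds a dimension outside of $x$ rather than inside it; everything else is routine.
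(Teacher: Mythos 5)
Your proof is correct and follows essentially the same route as the paper: partition $\Gamma_i(x)$ into the sets $G_\alpha$, invoke the preceding lemma for connectivity of each $G_\alpha$, and rule out edges of $\gamma_i(x)$ between distinct parts by showing any such edge would be of type (i) and hence removed. In fact your version is slightly more complete than the paper's, since you also verify that every edge of $\Gamma$ inside a fixed $G_\alpha$ is of type (ii) and therefore survives the edge deletion (so that $G_\alpha$ stays connected in $\gamma_i(x)$, not just in $\Gamma$), a point the paper leaves implicit.
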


\begin{proof}
    By Lemma \ref{qbinom}, there are $\binom{D}{D-i}_q=\binom{D}{i}_q$ subspaces of $x$ that have dimension $D-i$. In view of Lemma \ref{connected}, it suffices to show that for distinct $\alpha,\beta\subseteq x$ such that $\dim \alpha=\dim \beta=D-i$, the subgraphs induced on $G_{\alpha}$, $G_{\beta}$ form different connected components. Pick $y\in G_{\alpha}$, $z\in G_{\beta}$. Assume that there exists a path between $y$ and $z$ in $\gamma_i(x)$. Along the path there exist two adjacent vertices $y'\in G_{\alpha}$ and $z'\in G_{\beta}$. Observe that the edge $y'z'$ in $\Gamma_i(x)$ satisfies case (i) in Lemma \ref{y cap z}. Hence, the vertices $y',z'$ are not adjacent in $\gamma_i(x)$, which is a contradiction. The result follows.
\end{proof}

Pick $\alpha\subseteq x$. Next we define vectors $\alpha^{\vee},\alpha^{\mathcal{N}}\in V$. 
For $y \in X$, let $\widehat{y}$ denote the vector in $V$ with $1$ in the $y$-coordinate and $0$ elsewhere.
By the \emph{characteristic vector of $Y\subseteq X$}, we mean the vector
\begin{equation*}
    \sum_{y\in Y}\widehat{y}.
\end{equation*}

\begin{definition}
\label{vectordef}
    For $\alpha\subseteq x$, define the following vectors in $V$:
    \begin{equation*}
        \alpha^{\vee}=\sum_{y\in H_{\alpha}}\widehat{y}, \qquad \qquad \alpha^{\mathcal{N}}=\sum_{y\in G_{\alpha}}\widehat{y},
    \end{equation*}
    where $G_\alpha,H_{\alpha}$ are from (\ref{definegh}).
\end{definition}

\begin{lemma}
\label{vee to N}
    For $\alpha\subseteq x$ the following (\ref{veetoN}), (\ref{Ntovee}) hold: 
        \begin{equation}
            \label{veetoN}
            \alpha^{\vee}=\sum_{\alpha\subseteq \beta\subseteq x}\beta^{\mathcal{N}};
        \end{equation} 
        \begin{equation}
            \label{Ntovee}
            \alpha^{\mathcal{N}}=\sum_{\alpha\subseteq \beta\subseteq x}(-1)^{\dim\beta-\dim\alpha}q^{\binom{\dim\beta-\dim\alpha}{2}}\beta^{\vee}.
        \end{equation}
\end{lemma}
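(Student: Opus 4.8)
The plan is to prove the two identities by pure combinatorics on the poset $P_q(N)$, using the definitions of $\alpha^{\vee}$ and $\alpha^{\mathcal{N}}$ in terms of the sets $H_{\alpha}$ and $G_{\alpha}$. For \eqref{veetoN}, observe that $H_{\alpha}=\{y\in X\mid \alpha\subseteq y\}$ decomposes as a disjoint union over the possible intersections $y\cap x$: for $y\in H_{\alpha}$, the subspace $\beta:=y\cap x$ satisfies $\alpha\subseteq\beta\subseteq x$ (since $\alpha\subseteq y$ and $\alpha\subseteq x$ force $\alpha\subseteq y\cap x$), and conversely each such $\beta$ contributes exactly the vertices $y$ with $y\cap x=\beta$, i.e.\ the set $G_{\beta}$. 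Thus $H_{\alpha}=\bigsqcup_{\alpha\subseteq\beta\subseteq x}G_{\beta}$, and summing characteristic vectors gives \eqref{veetoN} immediately from Definition \ref{vectordef}.

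For \eqref{Ntovee}, the idea is to invert the relation \eqref{veetoN}. The transition is triangular with respect to the partial order on the interval $[\alpha,x]$ in $P_q(N)$, so the inverse exists and is given by the Möbius function of the subspace lattice. The Möbius function of the lattice of subspaces between $\alpha$ and $x$, where $\dim\beta-\dim\alpha=m$, is the standard value $(-1)^{m}q^{\binom{m}{2}}$ (see, e.g., the appendix identities or \cite[Ch.~3]{BCN}-type references on $q$-analogues). Hence Möbius inversion over the interval $[\alpha,x]$ applied to \eqref{veetoN} yields \eqref{Ntovee}. Concretely, I would substitute \eqref{veetoN} into the right-hand side of \eqref{Ntovee}, interchange the two sums, and reduce to the orthogonality relation
\begin{equation*}
\sum_{\alpha\subseteq\beta\subseteq\gamma}(-1)^{\dim\beta-\dim\alpha}q^{\binom{\dim\beta-\dim\alpha}{2}}=\delta_{\dim\alpha,\dim\gamma}
\end{equation*}
for $\alpha\subseteq\gamma\subseteq x$; this is precisely the statement that the sum of Möbius-function values over an interval vanishes unless the interval is trivial, and it is one of the $q$-binomial identities collected in the appendix.

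The main obstacle — really the only nonroutine point — is establishing the $q$-Möbius orthogonality identity displayed above, equivalently showing
\begin{equation*}
\sum_{\ell=0}^{m}(-1)^{\ell}q^{\binom{\ell}{2}}\binom{m}{\ell}_q=\delta_{m,0},
\end{equation*}
where the $q$-binomial coefficient counts the number of intermediate subspaces $\beta$ of a given dimension. This is the $q$-binomial theorem evaluated at a specific point (the finite $q$-analogue of $(1-1)^m$), and I expect to cite it from the appendix rather than reprove it. Once that identity is in hand, the interchange of summation and the collapse to the Kronecker delta are mechanical, and the proof is complete.
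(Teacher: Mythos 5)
Your proposal is correct and follows essentially the same route as the paper: \eqref{veetoN} is read off from the disjoint decomposition $H_{\alpha}=\bigcup_{\alpha\subseteq\beta\subseteq x}G_{\beta}$, and \eqref{Ntovee} is verified by substituting \eqref{veetoN} into its right-hand side, interchanging the sums, and collapsing the inner sum via the identity $\sum_{j=0}^{\ell}(-1)^{j}q^{\binom{j}{2}}\binom{\ell}{j}_q=\delta_{\ell,0}$ from the appendix (Lemma \ref{identity1}). The M\"{o}bius-function framing is a pleasant gloss but the computation is identical to the paper's.
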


\begin{proof}
    Equation (\ref{veetoN}) follows from the definition. Next we prove (\ref{Ntovee}). For $\alpha\subseteq \beta\subseteq x$, write 
    \begin{equation*}
        C_{\alpha,\beta}=(-1)^{\dim \beta-\dim\alpha}q^{\binom{\dim \beta-\dim \alpha}{2}}.
    \end{equation*}
    Using (\ref{veetoN}) we obtain
    \begin{equation*}
        \sum_{\alpha\subseteq \beta\subseteq x}C_{\alpha,\beta}\beta^{\vee}=\sum_{\alpha\subseteq \beta\subseteq x}C_{\alpha,\beta}\sum_{\beta\subseteq \gamma\subseteq x} \gamma^{\mathcal{N}}=\sum_{\alpha\subseteq \gamma\subseteq x}\gamma^{\mathcal{N}}\sum_{\alpha\subseteq \beta\subseteq \gamma}C_{\alpha,\beta}.
    \end{equation*}

    We now show that 
    \begin{equation*}
        \sum_{\alpha\subseteq \beta\subseteq \gamma}C_{\alpha,\beta}=\begin{cases}
            1&\text{if $\gamma=\alpha$,}\\
            0&\text{if $\gamma\neq \alpha$.}
        \end{cases}
    \end{equation*}
    
    Observe that
    \begin{equation*}
        \sum_{\alpha\subseteq \beta\subseteq \gamma}C_{\alpha,\beta}=\sum_{j=0}^{\ell}(-1)^jq^{\binom{j}{2}}\binom{\ell}{j}_q,
    \end{equation*}
    where $\ell=\dim \gamma-\dim \alpha$. The result follows from Lemma \ref{identity1} in the appendix.
\end{proof}

\begin{lemma}
\label{alphaN,alphaV}
    For $\alpha\subseteq x$, 
    \begin{equation*}
        \alpha^{\mathcal{N}}= E^*_{D-\dim \alpha}\alpha^{\vee}.
    \end{equation*}
\end{lemma}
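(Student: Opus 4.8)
The plan is to unpack both sides of the claimed identity $\alpha^{\mathcal N}=E^*_{D-\dim\alpha}\alpha^{\vee}$ directly from Definition~\ref{vectordef} and the definition of the dual idempotents $E^*_i$ from Section~2. Set $i=D-\dim\alpha$, so that $\dim\alpha=D-i$. By construction, $E^*_{i}$ is the diagonal projection onto the $i$-th subconstituent $\Gamma_i(x)$; equivalently, for a vertex $y\in X$ we have $E^*_i\widehat{y}=\widehat{y}$ if $\partial(x,y)=i$ and $E^*_i\widehat{y}=0$ otherwise. Since $\alpha^{\vee}=\sum_{y\in H_\alpha}\widehat{y}$ where $H_\alpha=\{y\in X\mid \alpha\subseteq y\}$, applying $E^*_i$ gives
\begin{equation*}
    E^*_{i}\alpha^{\vee}=\sum_{\substack{y\in H_\alpha\\ \partial(x,y)=i}}\widehat{y}.
\end{equation*}

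The core of the argument is then to identify the index set $\{y\in H_\alpha\mid \partial(x,y)=i\}$ with $G_\alpha=\{y\in X\mid y\cap x=\alpha\}$. First I would recall the standard fact for the Grassmann graph that for $y\in X=P_D$ one has $\partial(x,y)=D-\dim(x\cap y)$; this is implicit in the identification $\Gamma_i(x)=P_{D-i,i}$ used in the proof of Lemma~\ref{y cap z}. Now suppose $y\in H_\alpha$ with $\partial(x,y)=i$. Then $\alpha\subseteq y$ and $\alpha\subseteq x$, so $\alpha\subseteq x\cap y$; on the other hand $\dim(x\cap y)=D-i=\dim\alpha$, forcing $x\cap y=\alpha$, i.e.\ $y\in G_\alpha$. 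Conversely, if $y\in G_\alpha$ then $x\cap y=\alpha\supseteq\alpha$ so $y\in H_\alpha$, and $\dim(x\cap y)=\dim\alpha=D-i$ gives $\partial(x,y)=i$. Hence the two index sets coincide, and $E^*_{i}\alpha^{\vee}=\sum_{y\in G_\alpha}\widehat{y}=\alpha^{\mathcal N}$ by Definition~\ref{vectordef}, which is exactly the assertion.

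I do not expect a genuine obstacle here: the statement is essentially a bookkeeping identity, and the only substantive ingredient is the formula $\partial(x,y)=D-\dim(x\cap y)$, which follows from $\Gamma_i(x)=P_{D-i,i}$ already invoked in Lemma~\ref{y cap z}. The one point deserving care is making sure the containment-plus-dimension argument is stated cleanly in both directions, so that the equality of sets $\{y\in H_\alpha\mid\partial(x,y)=i\}=G_\alpha$ is unambiguous; once that is in place, the vector identity is immediate by linearity of $E^*_i$ acting on the standard basis $\{\widehat{y}\}_{y\in X}$.
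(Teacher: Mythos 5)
Your proof is correct and is just a careful unpacking of what the paper dismisses as ``Immediate from Definition \ref{vectordef}'': the identification $\{y\in H_\alpha\mid\partial(x,y)=D-\dim\alpha\}=G_\alpha$ via $\partial(x,y)=D-\dim(x\cap y)$ is exactly the bookkeeping the paper leaves implicit. Same approach, just written out in full.
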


\begin{proof}
    Immediate from Definition \ref{vectordef}.
\end{proof}

\begin{lemma}
\label{independent}
    The following (i),(ii) hold:
    \begin{enumerate}[label=(\roman*)]
        \item $\{\alpha^{\mathcal{N}}\mid \alpha\subseteq x\}$ are linearly independent;
        \item $\{\alpha^{\vee}\mid \alpha\subseteq x\}$ are linearly independent.
    \end{enumerate}
\end{lemma}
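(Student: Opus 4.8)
The plan is to prove each linear independence statement separately, then note that they are equivalent via the invertible change of basis provided by Lemma~\ref{vee to N}. For part (i), fix an integer $i$ with $0\leq i\leq D$ and restrict attention to the vectors $\alpha^{\mathcal{N}}$ with $\dim\alpha=D-i$. By Definition~\ref{vectordef}, each such $\alpha^{\mathcal{N}}$ is the characteristic vector of $G_\alpha$, and by Lemma~\ref{connected} the sets $G_\alpha$ (for distinct $\alpha\subseteq x$ of dimension $D-i$) are the vertex sets of the distinct connected components of $\gamma_i(x)$; in particular they are pairwise disjoint and nonempty. Hence the vectors $\{\alpha^{\mathcal{N}}\mid \dim\alpha=D-i\}$ have pairwise disjoint supports and are therefore linearly independent. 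Moreover each such vector lies in $E^*_i V = E^*_i(x)V$ (since $G_\alpha\subseteq P_{D-i,i}=\Gamma_i(x)$), so vectors coming from different values of $i$ live in the distinct eigenspaces $E^*_iV$ of $A^*$, which intersect trivially. Combining these two observations, the full set $\{\alpha^{\mathcal{N}}\mid\alpha\subseteq x\}$ is linearly independent, proving (i).

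For part (ii), I would deduce it from (i) using Lemma~\ref{vee to N}. Group the indexing subspaces $\alpha\subseteq x$ by dimension; within the block of subspaces of a fixed dimension $D-i$, equation~(\ref{veetoN}) expresses $\alpha^{\vee}$ as a sum of $\beta^{\mathcal{N}}$ over $\alpha\subseteq\beta\subseteq x$, and equation~(\ref{Ntovee}) gives the inverse relation. Thus the transition between $\{\alpha^{\vee}\}$ and $\{\alpha^{\mathcal{N}}\}$ is given by a matrix that is block-triangular with respect to the partial order (refined by dimension) on the subspaces of $x$, with identity diagonal blocks; such a matrix is invertible. Since $\{\alpha^{\mathcal{N}}\mid\alpha\subseteq x\}$ is linearly independent by (i), applying an invertible linear transformation yields that $\{\alpha^{\vee}\mid\alpha\subseteq x\}$ is linearly independent as well.

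Alternatively, (ii) admits a direct argument that mirrors (i): order the subspaces $\alpha\subseteq x$ by increasing dimension; if $\sum_\alpha c_\alpha\alpha^{\vee}=0$, look at a coordinate $\widehat{y}$ for $y$ with $x\cap y$ minimal among the $\alpha$ with $c_\alpha\neq 0$, and note $\widehat{y}$ appears in $\alpha^{\vee}$ iff $\alpha\subseteq y$, i.e.\ $\alpha\subseteq x\cap y$, forcing $c_{\alpha}=0$ for that minimal $\alpha$ — a contradiction. Either route works; I would present the change-of-basis argument since Lemma~\ref{vee to N} is already available. The only mild subtlety to check is that $G_\alpha\neq\varnothing$ for every $\alpha\subseteq x$ with $\dim\alpha=D-i$, so that the disjoint-support vectors are genuinely nonzero; this holds because $N>2D$ guarantees enough room to extend $\alpha$ to a $D$-subspace meeting $x$ exactly in $\alpha$, and is implicitly contained in Lemma~\ref{connected} (each component is nonempty). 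I do not anticipate a serious obstacle here; the main content has already been done in establishing Lemma~\ref{connected}.
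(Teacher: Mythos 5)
Your proposal is correct and follows essentially the same route as the paper: part (i) rests on the pairwise disjointness of the supports $G_\alpha$ (the paper simply notes that all the $G_\alpha$ are mutually disjoint, without needing the detour through the $E^*_iV$ eigenspaces, since $y\cap x$ is determined by $y$), and part (ii) is deduced from (i) via the unitriangular relation \eqref{veetoN}. Your extra check that each $G_\alpha$ is nonempty is a reasonable point of care that the paper leaves implicit.
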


\begin{proof}
    (i) The sets $\{G_\alpha \mid \alpha \subseteq x\}$ are mutually disjoint by Lemma~\ref{connected}, and $\alpha^{\mathcal{N}}$ is the characteristic vector of $G_\alpha$ by Definition~\ref{vectordef}. The result follows.
    
    (ii) Combine (i) and (\ref{veetoN}). 
\end{proof}

\section{The action of $A,A^*$ on the vectors $\alpha^\vee$, $\alpha^\mathcal{N}$}
Recall the Grassmann graph $\Gamma=J_q(N,D)$.
Pick $x \in X$ and write $T=T(x)$.
In this section we display the action of $A,A^*$ on the vectors $\alpha^\vee$, $\alpha^\mathcal{N}$.

\begin{theorem}
\label{Aaction}
    For $\alpha\subseteq x$,
    \begin{equation*}
        A\alpha^{\vee}=\theta_i \alpha^{\vee}+[D-i+1]\sum_{\text{$\alpha$ \rm{covers} $\beta$}}\beta^{\vee},
    \end{equation*}
    where $i=\dim\alpha$.
\end{theorem}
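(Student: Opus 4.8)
The plan is to compute the $y$-coordinate of $A\alpha^{\vee}$ for an arbitrary vertex $y\in X$ and show it matches the claimed right-hand side. Recall $\alpha^{\vee}$ is the characteristic vector of $H_{\alpha}=\{z\in X\mid \alpha\subseteq z\}$, so $(A\alpha^{\vee})_y$ counts the neighbors $z$ of $y$ with $\alpha\subseteq z$. I would split into cases according to $\dim(y\cap\alpha)$, or more usefully according to whether $\alpha\subseteq y$ or not; since $\dim\alpha=i$ and $\alpha\subseteq x$, and the terms on the right only involve $\beta^{\vee}$ with $\beta$ covered by $\alpha$ (so $\dim\beta=i-1$) and $\alpha^{\vee}$ itself, the coordinate function on the right is supported on vertices $y$ with $\dim(y\cap\alpha)\in\{i-1,i\}$, i.e. $\alpha\subseteq y$ or $\dim(y\cap\alpha)=i-1$.

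The key steps, in order: (1) Fix $y$ with $\alpha\subseteq y$. A neighbor $z$ of $y$ satisfies $\dim(y\cap z)=D-1$; the number of such $z$ with $\alpha\subseteq z$ equals the number of neighbors of $\bar y=y/\alpha$ in the Grassmann graph $J_q(N-i,D-i)$ (using Remark~\ref{isomorphic}), which is the valency $q[D-i][N-D]$ of that graph by \eqref{kappa}. On the other hand, $\theta_i+[D-i+1]\cdot(\text{number of }\beta\text{ covered by }\alpha\text{ with }\beta\subseteq y)$; since $\alpha\subseteq y$, every one of the $[i]$ hyperplanes $\beta$ of $\alpha$ lies in $y$, so this reads $\theta_i+[D-i+1][i]$. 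Then I verify the identity $\theta_i+[D-i+1][i]=q[D-i][N-D]$ using \eqref{eigenvalues} and the defining relation $[m]-[n]=q^n[m-n]$ for the bracket symbol; this is routine $q$-arithmetic. (2) Fix $y$ with $\dim(y\cap\alpha)=i-1$, so $\beta_0:=y\cap\alpha$ is the unique hyperplane of $\alpha$ contained in $y$. The RHS coordinate is $[D-i+1]$. I must show $y$ has exactly $[D-i+1]$ neighbors $z$ with $\alpha\subseteq z$: such a $z$ contains $\alpha$ and meets $y$ in a hyperplane of $y$, forcing $z=(y\cap z)+\alpha$ with $y\cap z\supseteq\beta_0$ a hyperplane of $y$; counting the hyperplanes of $y$ that contain $\beta_0$ and checking $z\cap y$ indeed has dimension $D-1$ gives $[D-(D-1)]$... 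I would instead count directly: $z=W+\alpha$ where $W$ ranges over hyperplanes of $y$ containing $\beta_0$, of which there are $[\dim y-\dim\beta_0]=[D-(i-1)]=[D-i+1]$, and one checks each such $z$ is a genuine neighbor. (3) Fix $y$ with $\dim(y\cap\alpha)\le i-2$; show $y$ has no neighbor $z$ with $\alpha\subseteq z$ (since $\dim(y\cap z)=D-1$ and $\alpha\subseteq z$ would force $\dim(y\cap\alpha)\ge\dim\alpha-(D-(D-1))=i-1$), matching the RHS coordinate $0$.

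The main obstacle I expect is not conceptual but bookkeeping: making sure the case $\dim(y\cap\alpha)=i-1$ is counted correctly (that each hyperplane $W$ of $y$ through $\beta_0$ gives a distinct $z=W+\alpha$ that is actually adjacent to $y$, i.e. $\dim(W+\alpha)=D$ and $\dim(y\cap(W+\alpha))=D-1$), and verifying the closed-form $q$-binomial identity $\theta_i+[i][D-i+1]=q[D-i][N-D]$ cleanly from \eqref{eigenvalues}. Both are elementary; the identity in step (1) can be dispatched by writing everything over $q-1$ and comparing, or by citing an appropriate identity from the appendix. Once the three cases are checked, comparing coordinates for all $y\in X$ yields the stated operator identity.
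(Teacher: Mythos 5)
Your proposal is correct and is essentially the paper's argument: the paper also proves this by counting, for each vertex, its neighbors in $H_{\alpha}$ (obtaining $q[D-i][N-D]$ for vertices containing $\alpha$ and $[D-i+1]$ for vertices meeting $\alpha$ in a hyperplane, and $0$ otherwise), merely packaging the second and third cases via the characteristic vector of the set $S$ of outside vertices adjacent to $H_{\alpha}$, which equals $\sum_{\alpha \text{ covers } \beta}(\beta^{\vee}-\alpha^{\vee})$. The final step in both arguments is the same $q$-bracket identity $q[D-i][N-D]-[i][D-i+1]=\theta_i-[i]\cdot[D-i+1]+[i][D-i+1]$, i.e.\ $\theta_i+[i][D-i+1]=q[D-i][N-D]$, obtained from \eqref{eigenvalues} and Corollary \ref{cover}.
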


\begin{proof}
    We mimic the proof given in \cite[Theorem~9.1]{NT2024arxiv}. Recall the set $H_{\alpha}$ from (\ref{definegh}). Define
    \begin{equation*}
        S=\{y\in X\setminus H_{\alpha}\mid \text{$y$ is adjacent to at least one vertex in $H_{\alpha}$}\},
    \end{equation*}
    where $X\setminus H_{\alpha}$ is the complement of $H_{\alpha}$ in $X$. Note that the characteristic vector of $S$ is equal to
    \begin{equation*}
        \sum_{\text{$\alpha$ covers $\beta$}}(\beta^{\vee}-\alpha^{\vee}).
    \end{equation*}
    By combinatorial counting, each vertex in $H_{\alpha}$ is adjacent to exactly $q[D-i][N-D]$ vertices in $H_{\alpha}$. Also, each vertex in $S$ is adjacent to exactly $[D-i+1]$ vertices in $H_{\alpha}$. Hence,
    \begin{equation*}
        A\alpha^{\vee}=q[D-i][N-D]\alpha^{\vee}+[D-i+1]\sum_{\text{$\alpha$ covers $\beta$}}(\beta^{\vee}-\alpha^{\vee}).
    \end{equation*}

    Use Corollary \ref{cover} and (\ref{eigenvalues}) to obtain the result.
\end{proof}

\begin{theorem}
    For $\alpha\subseteq x$, 
    \begin{align*}
        A \alpha^{\mathcal{N}}=&[D-i]\bigl(q[N-D]-[D-i]\bigr)\alpha^{\mathcal{N}}+q^{2D-2i-1}[N-2D+i+1]\sum_{\substack{\beta\subseteq x,\\\beta \text{ \rm{covers} }\alpha}}\beta ^{\mathcal{N}}\\
        &\qquad \qquad \qquad \qquad \qquad \qquad +q^{D-i}\sum_{\substack{\beta\subseteq x,\\ \text{\rm{$\alpha$ covers $\alpha\cap \beta$,}}\\ \text{ \rm{$\beta$ covers $\alpha\cap \beta$}}}}\beta^{\mathcal{N}}+[D-i+1]\sum_{\alpha\text{ \rm{covers }}\beta}\beta^{\mathcal{N}},
    \end{align*}
    where $i=\dim \alpha$.
\end{theorem}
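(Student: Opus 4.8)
The plan is to compute $A\alpha^{\mathcal{N}}$ coordinate by coordinate. By Definition~\ref{vectordef}, the vector $\alpha^{\mathcal{N}}$ is the characteristic vector of $G_\alpha=\{y\in X\mid y\cap x=\alpha\}$ from \eqref{definegh}, so $A\alpha^{\mathcal{N}}=\sum_{y\in G_\alpha}\sum_{z\sim y}\widehat z$; hence for $z\in X$ the $z$-coordinate of $A\alpha^{\mathcal{N}}$ is $n(z):=|\{y\in G_\alpha\mid \dim(y\cap z)=D-1\}|$. Since $X=\bigsqcup_{\gamma\subseteq x}G_\gamma$ (Lemma~\ref{connected}), it suffices to show that $n(z)$ depends only on $\gamma:=z\cap x$, that $n(z)=0$ unless $\gamma$ is one of the four types appearing in the statement, and that in those cases $n(z)$ equals the claimed coefficient; then $A\alpha^{\mathcal{N}}=\sum_{\gamma\subseteq x}n(z)\,\gamma^{\mathcal{N}}$ collapses to the stated sum.

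First I would pin down which $\gamma$ contribute. If $y\in G_\alpha$ and $z\sim y$, then $y\cap z$ is a hyperplane of each of $y,z$, and one checks directly that $(y\cap z)\cap\alpha=(y\cap z)\cap x=(y\cap z)\cap\gamma=\alpha\cap\gamma$. Since $\alpha\subseteq y$ and $\gamma\subseteq z$, the subspace $\alpha\cap\gamma=(y\cap z)\cap\alpha$ is either $\alpha$ or a hyperplane of $\alpha$, and likewise is either $\gamma$ or a hyperplane of $\gamma$; thus $\dim(\alpha\cap\gamma)\in\{\dim\alpha-1,\dim\alpha\}\cap\{\dim\gamma-1,\dim\gamma\}$. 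Combined with Lemma~\ref{coverlem} (and Lemma~\ref{y cap z}), a short case analysis shows the only possibilities are: $\gamma=\alpha$; $\gamma$ covers $\alpha$; $\dim(\alpha\cap\gamma)=\dim\alpha-1$ with $\dim\gamma=\dim\alpha$ (so $\alpha$ and $\gamma$ both cover $\alpha\cap\gamma$); or $\alpha$ covers $\gamma$. These are exactly the index sets of the four sums, so $n(z)=0$ for all other $\gamma$.

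Next I would evaluate $n(z)$ in each case, fixing $z$ with $z\cap x=\gamma$ and writing $i=\dim\alpha$. If $\gamma=\alpha$, then $n(z)$ is the valency of the subgraph induced on $G_\alpha$, which by Remark~\ref{isomorphic} is the last subconstituent of $J_q(N-i,D-i)$ and hence is regular with valency equal to the intersection number $a_{D-i}$ of that graph; applying \eqref{kappa}, \eqref{sizebc} (with $a_{D-i}=k-b_{D-i}-c_{D-i}$, $b_{D-i}=0$) to $J_q(N-i,D-i)$ gives $a_{D-i}=q[D-i][N-D]-[D-i]^2=[D-i]\bigl(q[N-D]-[D-i]\bigr)$. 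In the three remaining cases I would argue geometrically: for $y\in G_\alpha$ with $z\sim y$, set $W:=y\cap z$, a hyperplane of $z$; one shows $y$ is recovered from $W$ (together with the extra line $y/W$ in $\mathcal V/W$ in the case $\gamma$ covers $\alpha$), that the admissible $W$ are exactly the hyperplanes of $z$ containing a prescribed one of $\alpha,\alpha\cap\gamma$ and avoiding a prescribed one of $\gamma,\alpha$, and that the admissible lines avoid $(x+W)/W$. Counting hyperplanes of $z$ through a fixed subspace and avoiding another, and using $[a]-[b]=q^{b}[a-b]$ (see the appendix), yields $n(z)=q^{2D-2i-1}[N-2D+i+1]$ when $\gamma$ covers $\alpha$, $n(z)=q^{D-i}$ in the ``diagonal'' case, and $n(z)=[D-i+1]$ when $\alpha$ covers $\gamma$. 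Each count is uniform over $z\in G_\gamma$, so substituting into $A\alpha^{\mathcal{N}}=\sum_{\gamma\subseteq x}n(z)\,\gamma^{\mathcal{N}}$ gives the formula.

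The main obstacle is the geometric bookkeeping in the last three cases: for each type of $\gamma$ one must identify exactly the incidence constraints on $W=y\cap z$ with respect to $x$, $\alpha$, and $\alpha\cap\gamma$, verify that the map $y\mapsto(W,\ldots)$ is a bijection onto the described configuration space, and keep the quotient dimensions straight; in the case $\gamma=\alpha$, if one prefers a direct hyperplane count to the subconstituent argument, one must also remember to discard the solution $y=z$. Everything else is routine arithmetic with the $q$-integers $[\ell]$. One could alternatively obtain the identity by applying $A$ to \eqref{veetoN}, invoking Theorem~\ref{Aaction}, and solving for $A\alpha^{\mathcal{N}}$ by downward induction on $\dim\alpha$, but this replaces the geometry with a longer $q$-binomial computation.
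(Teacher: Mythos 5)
Your proposal is correct and follows essentially the same route as the paper: the paper also computes the $z$-coordinate of $A\alpha^{\mathcal{N}}$ as the number $e_z$ of neighbours of $z$ in $G_\alpha$, observes it depends only on $\beta=z\cap x$, and tabulates the same four nonzero cases with the same coefficients "by combinatorial counting." In fact your write-up supplies more of the counting details (the hyperplane/line bookkeeping and the identity $[a]-[b]=q^{b}[a-b]$) than the paper, which simply cites the analogous Johnson-graph argument and states the table.
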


\begin{proof}
    We mimic the proof given in \cite[Theorem~9.2]{NT2024arxiv}. Recall the set $G_{\alpha}$ from (\ref{definegh}). Write
    \begin{equation*}
        A \alpha^{\mathcal{N}}=\sum_{z\in X}e_z \widehat{z},
    \end{equation*}
    where $e_z$ is the number of vertices in $G_{\alpha}$ that are adjacent to $z$. Pick $z\in X$ and define $\beta=z\cap x$. We now compute $e_z$ for $z\in X$ using combinatorial counting. In the table below, for each case given in the left column, we display the value of $e_z$ in the right column.
    \begin{center}
    \begin{tabular}{c|c}
        Case & $e_z$ \\
        \hline
        \\
        $\vert \dim \beta-\dim \alpha\vert\geq 2$ & $0$\\ \\
        $\beta=\alpha$ & $[D-i]\bigl(q[N-D]-[D-i]\bigr)$\\ \\
        $\beta$ covers $\alpha$ & $q^{2D-2i-1}[N-2D+i+1]$\\ \\
        $\alpha$ covers $\alpha\cap \beta$ and $\beta$ covers $\alpha\cap \beta$ & $q^{D-i}$\\ \\
        $\alpha$ covers $\beta$ & $[D-i+1]$
    \end{tabular}
    \end{center}
    The result follows.
\end{proof}

\begin{theorem}
\label{a*N}
    For $\alpha\subseteq x$, 
    \begin{equation*}
        A^* \alpha^{\mathcal{N}}=\theta^{*}_{D-\dim \alpha}\alpha^{\mathcal{N}}.
    \end{equation*}
\end{theorem}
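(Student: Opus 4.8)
The plan is to use the characterization of $A^*$ as a diagonal matrix whose $(y,y)$-entry records the distance $\partial(x,y)$. Concretely, recall that $A^* = \sum_{k=0}^{D}\theta_k^* E_k^*$, so $A^*$ acts on $\widehat{y}$ by the scalar $\theta_{\partial(x,y)}^*$. Set $i = \dim\alpha$. By Definition~\ref{vectordef}, we have $\alpha^{\mathcal{N}} = \sum_{y\in G_\alpha}\widehat{y}$ where $G_\alpha = \{y\in X \mid y\cap x = \alpha\}$. For every $y\in G_\alpha$, the dimension of $y\cap x$ equals $\dim\alpha = i$, so $y\in P_{i,D-i}$, i.e. $\partial(x,y) = \dim y - \dim(y\cap x) = D - i$. (This is exactly the identity $\Gamma_{D-i}(x) = P_{i,D-i}$ noted in the proof of Lemma~\ref{y cap z}.) Hence $E_{D-i}^*\widehat{y} = \widehat{y}$ for all $y\in G_\alpha$, and every other $E_k^*$ kills $\widehat{y}$.

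First I would write out $A^*\widehat{y} = \sum_{k=0}^{D}\theta_k^* E_k^*\widehat{y} = \theta_{\partial(x,y)}^*\widehat{y} = \theta_{D-i}^*\widehat{y}$ for each $y\in G_\alpha$. Then summing over $y\in G_\alpha$ and using linearity,
\begin{equation*}
    A^*\alpha^{\mathcal{N}} = \sum_{y\in G_\alpha} A^*\widehat{y} = \sum_{y\in G_\alpha}\theta_{D-i}^*\widehat{y} = \theta_{D-i}^*\sum_{y\in G_\alpha}\widehat{y} = \theta_{D-i}^*\alpha^{\mathcal{N}}.
\end{equation*}
Since $i = \dim\alpha$, this gives $A^*\alpha^{\mathcal{N}} = \theta_{D-\dim\alpha}^*\alpha^{\mathcal{N}}$, as required. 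Alternatively, and even more briefly, one can invoke Lemma~\ref{alphaN,alphaV}, which says $\alpha^{\mathcal{N}} = E_{D-\dim\alpha}^*\alpha^{\vee}$; since $A^*E_k^* = \theta_k^* E_k^*$, applying $A^*$ immediately yields $A^*\alpha^{\mathcal{N}} = A^*E_{D-\dim\alpha}^*\alpha^{\vee} = \theta_{D-\dim\alpha}^* E_{D-\dim\alpha}^*\alpha^{\vee} = \theta_{D-\dim\alpha}^*\alpha^{\mathcal{N}}$.

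There is essentially no obstacle here: the statement is a direct consequence of the fact that $\alpha^{\mathcal{N}}$ is supported entirely on the single subconstituent $\Gamma_{D-i}(x)$, together with the spectral decomposition of $A^*$ with respect to $\{E_k^*\}_{k=0}^{D}$. The only point requiring a word of justification is the distance computation $\partial(x,y) = D - \dim\alpha$ for $y\in G_\alpha$, which follows since $y\cap x = \alpha$ forces $\dim(y\cap x) = \dim\alpha$ and hence $\partial(x,y) = \dim y - \dim(y\cap x) = D - \dim\alpha$ in the Grassmann graph. I would present the short argument via Lemma~\ref{alphaN,alphaV} as the main line, since it reuses machinery already established in the previous section.
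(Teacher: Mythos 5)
Your argument is correct and is essentially the paper's own proof, which simply notes that the result is immediate from $\alpha^{\mathcal{N}}\in E^*_{D-\dim\alpha}V$ (i.e., $\alpha^{\mathcal{N}}$ is supported on the single subconstituent $\Gamma_{D-\dim\alpha}(x)$) together with $A^*E^*_k=\theta^*_kE^*_k$. You have merely spelled out the distance computation and the spectral decomposition in more detail than the paper does.
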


\begin{proof}
    Immediate from the fact that $\alpha^{\mathcal{N}}\in E_{D-\dim \alpha}^{*}V$.
\end{proof}

\begin{theorem}
\label{A*vee}
    For $\alpha\subseteq x$,
    \begin{equation*}
        A^*\alpha^{\vee}=\theta^*_{D-i}\alpha^{\vee}+\frac{q^{-D+i+1}[N][N-1]}{[D][N-D]}\sum_{\gamma \text{ \rm{covers} } \alpha}\gamma^{\vee},
    \end{equation*}
    where $i=\dim \alpha$.
\end{theorem}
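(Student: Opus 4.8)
The plan is to reduce the stated identity to a single scalar relation among the dual eigenvalues, exploiting the expansion of $\alpha^{\vee}$ in terms of the vectors $\beta^{\mathcal{N}}$ together with Theorem~\ref{a*N}. Since $A^{*}$ is diagonal with $(A^{*})_{y,y}=\theta^{*}_{\partial(x,y)}$, and since Theorem~\ref{a*N} gives $A^{*}\beta^{\mathcal{N}}=\theta^{*}_{D-\dim\beta}\beta^{\mathcal{N}}$, the first move is to apply $A^{*}$ to $\alpha^{\vee}=\sum_{\alpha\subseteq\beta\subseteq x}\beta^{\mathcal{N}}$ from (\ref{veetoN}), obtaining
\begin{equation*}
    A^{*}\alpha^{\vee}=\sum_{\alpha\subseteq\beta\subseteq x}\theta^{*}_{D-\dim\beta}\,\beta^{\mathcal{N}}.
\end{equation*}

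Next I would expand the claimed right-hand side in the same basis. Using (\ref{veetoN}) for $\alpha^{\vee}$ and for each $\gamma^{\vee}$ with $\gamma$ covering $\alpha$ and $\gamma\subseteq x$, one sees that for each $\beta$ with $\alpha\subseteq\beta\subseteq x$ the coefficient of $\beta^{\mathcal{N}}$ on the right-hand side equals $\theta^{*}_{D-i}+c\cdot\#\{\gamma\mid\alpha\subset\gamma\subseteq\beta,\ \dim\gamma=i+1\}$, where $c=q^{-D+i+1}[N][N-1]/([D][N-D])$ is the constant in the theorem. The inner count is the number of lines in $\beta/\alpha$, i.e.\ $[\dim\beta-i]$. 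Since $\{\beta^{\mathcal{N}}\mid\beta\subseteq x\}$ is linearly independent by Lemma~\ref{independent}(i), comparing the coefficients of $\beta^{\mathcal{N}}$ (with $m:=\dim\beta-i$) shows the theorem is equivalent to the scalar identity
\begin{equation*}
    \theta^{*}_{D-i-m}=\theta^{*}_{D-i}+c\,[m]\qquad(0\leq m\leq D-i).
\end{equation*}

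Finally I would verify this from the explicit formula (\ref{dual}). Writing $\theta^{*}_{j}=P+Qq^{-j}$ with $Q=q[N][N-1]/\bigl((q-1)[D][N-D]\bigr)$, the constant $P$ cancels and
\begin{equation*}
    \theta^{*}_{D-i-m}-\theta^{*}_{D-i}=Qq^{\,i-D}\bigl(q^{m}-1\bigr)=Qq^{\,i-D}(q-1)\,[m],
\end{equation*}
so the identity holds with $c=Qq^{\,i-D}(q-1)=q^{-D+i+1}[N][N-1]/([D][N-D])$, exactly as claimed.

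I do not expect a serious obstacle: the only substantive point is recognizing that the difference of two dual eigenvalues collapses to a $q$-integer times a geometric count, which is immediate because $\theta^{*}_{j}$ is an affine function of $q^{-j}$. One mild bookkeeping matter is that the sum $\sum_{\gamma\text{ covers }\alpha}\gamma^{\vee}$ must be taken over subspaces $\gamma$ of $x$, so that the regrouping closes up; this is consistent with the convention that $\alpha^{\vee},\alpha^{\mathcal{N}}$ are indexed by subspaces of $x$. Alternatively, one could give a direct proof paralleling Theorem~\ref{Aaction}: from $A^{*}\alpha^{\vee}=\sum_{y\in H_{\alpha}}\theta^{*}_{\partial(x,y)}\widehat{y}$ and $\partial(x,y)=D-\dim(x\cap y)$ for $y\in H_{\alpha}$, one groups the vertices of $H_{\alpha}$ by $\dim(x\cap y)$, notes that $\sum_{\gamma\subseteq x,\ \gamma\text{ covers }\alpha}\gamma^{\vee}$ places coefficient $[\dim(x\cap y)-i]$ on $\widehat{y}$, and again reduces to the same dual-eigenvalue identity.
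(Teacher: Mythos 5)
Your proposal is correct, and while it opens the same way as the paper's proof, it finishes by a genuinely different route. Both arguments begin by applying $A^*$ to the expansion $\alpha^{\vee}=\sum_{\alpha\subseteq\beta\subseteq x}\beta^{\mathcal{N}}$ from \eqref{veetoN} and invoking Theorem~\ref{a*N}. The paper then converts back to the $\vee$-basis via the inversion formula \eqref{Ntovee}, which produces an alternating inner sum $\sum_{\alpha\subseteq\beta\subseteq\gamma}\theta^*_{D-\dim\beta}(-1)^{\dim\gamma-\dim\beta}q^{\binom{\dim\gamma-\dim\beta}{2}}$ whose vanishing for $\dim\gamma-\dim\alpha\geq 2$ must be checked using the appendix identities (Lemmas~\ref{identity1} and \ref{identity2}). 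You instead expand the \emph{claimed} right-hand side in the linearly independent set $\{\beta^{\mathcal{N}}\mid\beta\subseteq x\}$ (Lemma~\ref{independent}(i)) and compare coefficients, which collapses the whole statement to the two-term relation $\theta^*_{D-i-m}=\theta^*_{D-i}+c\,[m]$; this follows at once from the fact that $\theta^*_j$ is affine in $q^{-j}$ by \eqref{dual}. Your count of the covers $\gamma$ with $\alpha\subset\gamma\subseteq\beta$ as $[\dim\beta-i]$ is right, and your verification of the constant $c=q^{-D+i+1}[N][N-1]/([D][N-D])$ matches the theorem. What your route buys is the complete avoidance of the $q$-binomial alternating-sum identities; what it gives up is that it verifies a given formula rather than deriving it, whereas the paper's Möbius-inversion computation produces the coefficients without knowing them in advance. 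Your remark that the sum over $\gamma$ covering $\alpha$ is implicitly restricted to $\gamma\subseteq x$ is also correct and consistent with the paper's final display.
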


\begin{proof}
    Using Lemma \ref{vee to N}(i),
    \begin{equation}
        \label{A*eq1}
        A^*\alpha^{\vee}=A^*\sum_{\alpha\subseteq \beta \subseteq x}\beta^{\mathcal{N}}.
    \end{equation}
    By Theorem $\ref{a*N}$,
    \begin{equation}
    \label{A*eq2}
        A^*\beta^{\mathcal{N}}=\theta^*_{D-\dim \beta}\beta^{\mathcal{N}}.
    \end{equation}

    Combining (\ref{A*eq1}) and (\ref{A*eq2}) and Lemma \ref{vee to N}(ii), we obtain
    \begin{align*}
        A^*\alpha^{\vee}&=\sum_{\alpha\subseteq \beta\subseteq x}\theta^*_{D-\dim\beta} \sum_{\beta\subseteq \gamma\subseteq x}(-1)^{\dim \gamma-\dim \beta}q^{\binom{\dim \gamma-\dim \beta}{2}}\gamma^{\vee}\\
        &=\sum_{\alpha\subseteq \gamma\subseteq x}\gamma^{\vee}\sum_{\alpha\subseteq \beta \subseteq \gamma}\theta^{*}_{D-\dim \beta}(-1)^{\dim \gamma-\dim \beta}q^{\binom{\dim\gamma-\dim \beta}{2}}.
    \end{align*}

    Observe that
    \begin{align}
        \sum_{\alpha\subseteq \beta \subseteq \gamma}\theta^{*}_{D-\dim \beta}(-1)^{\dim \gamma-\dim \beta}q^{\binom{\dim\gamma-\dim \beta}{2}}&=\sum_{i=\dim \alpha}^{\dim \gamma}\theta^*_{D-i}(-1)^{\dim \gamma-i}q^{\binom{\dim \gamma-i}{2}}\binom{\dim \gamma-\dim \alpha}{\dim \gamma-i}_{q} \\
        &=\sum_{j=0}^{\dim \gamma-\dim \alpha}\theta^*_{D-\dim \gamma+j}(-1)^{j}q^{\binom{j}{2}}\binom{\dim \gamma-\dim \alpha}{j}_{q}.\label{sum}
    \end{align}
    Write $\ell=\dim \gamma-\dim \alpha$. Observe that if $\ell=0$, then the sum in (\ref{sum}) equals $\theta^*_{D-\dim \alpha}$. If $\ell=1$, then by (\ref{dual}), the sum in (\ref{sum}) equals
    \begin{equation*}
        \frac{q^{-D+\dim \alpha+1}[N][N-1]}{[D][N-D]}.
    \end{equation*}

    We now assume that $\ell\geq 2$. It suffices to show that the sum in (\ref{sum}) is equal to $0$. Using (\ref{dual}), we obtain
    \begin{equation}
    \label{split}
        \sum_{j=0}^{\ell}\theta^*_{D-\dim \gamma+j}(-1)^{j}q^{\binom{j}{2}}\binom{\ell}{j}_{q}=C_1\sum_{j=0}^{\ell}(-1)^{j}q^{\binom{j}{2}}\binom{\ell}{j}_{q}+C_2\sum_{j=0}^{\ell}(-1)^{j}q^{\binom{j}{2}-j}\binom{\ell}{j}_{q},
    \end{equation}
    where 
    \begin{equation*}
        C_1=-\frac{q[N-1]([D]+[N-D])}{(q-1)[D][N-D]},\qquad \qquad C_2=\frac{q^{-D+\dim \gamma+1}[N][N-1]}{(q-1)[D][N-D]}.  
    \end{equation*}
    By Lemmas \ref{identity1}, \ref{identity2} in the appendix, the right-hand side of (\ref{split}) is equal to $0$. The result follows. 
\end{proof}

\section{Some bases for the nucleus $\mathcal{N}$}
\label{somebases}
We continue our discussion of the Grassmann graph $\Gamma=J_q(N,D)$.
Pick $x \in X$ and write $T=T(x)$.
Recall the nucleus $\mathcal{N}$ of $\Gamma$.
In this section we display two bases for $\mathcal{N}$. Recall the vectors $\alpha^{\vee}$ and $\alpha^{\mathcal{N}}$ from Definition \ref{vectordef}.

\begin{lemma}
    For $\alpha\subseteq x$,
    \begin{equation}
    \label{e0}
        \alpha^{\vee}\in E_0V+E_1V+\cdots + E_{\dim \alpha}V.
    \end{equation}
\end{lemma}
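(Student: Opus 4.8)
The plan is to show $A^*$ acts in a suitable way on $\alpha^{\vee}$ so that repeated application keeps us inside a controlled span, or — more directly — to argue dimension-by-dimension using the $E_i^*$-decomposition. First I would note that $\alpha^{\vee}$ is the characteristic vector of $H_\alpha = \{y \in X \mid \alpha \subseteq y\}$, and by Definition \ref{vectordef} together with \eqref{veetoN} we have $\alpha^{\vee} = \sum_{\alpha \subseteq \beta \subseteq x} \beta^{\mathcal{N}}$, where each $\beta^{\mathcal{N}} \in E^*_{D-\dim\beta}V$ by Lemma \ref{alphaN,alphaV}. So the key observation is that $\alpha^{\vee}$ is supported, in the $E^*$-grading, on the levels $E^*_0 V, E^*_1 V, \dots, E^*_{D-\dim\alpha}V$.

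The cleanest route is induction on $\dim\alpha$, working downward from $\dim\alpha = D$ (where $\alpha = x$, $\alpha^{\vee} = \widehat{x} \in E^*_0 V = E_0 V + \cdots + E_0 V$... wait, that is not quite the base case I want). Instead I would induct on the co-dimension $i = D - \dim\alpha$ upward from $i=0$. When $\dim\alpha = D$, $\alpha = x$ and $\alpha^{\vee} = \widehat{x}$; since $\widehat{x}$ spans $E^*_0 V$ and one checks $E^*_0 V \subseteq E_0 V + \cdots + E_D V$ trivially — but we need the sharper bound $E^*_0 V = E_0 V$, which holds because $E^*_0 V$ is one-dimensional and the primary $T$-module $W_0$ (which contains $E^*_0 V$) has $\dim E_0 W_0 = 1$, so $E^*_0 W_0 \subseteq E_0 V$. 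For the inductive step, suppose \eqref{e0} holds for all subspaces of $x$ of codimension less than $i$, and let $\dim\alpha = D - i$. Apply Theorem \ref{Aaction}: $A\alpha^{\vee} = \theta_i\alpha^{\vee} + [D-i+1]\sum_{\alpha \text{ covers }\beta}\beta^{\vee}$, and each $\beta$ appearing has $\dim\beta = D - i - 1$, i.e. codimension $i+1$ — that goes the wrong way. So instead I would use the $A^*$-action, Theorem \ref{A*vee}: $A^*\alpha^{\vee} = \theta^*_{D-i}\alpha^{\vee} + \frac{q^{-D+i+1}[N][N-1]}{[D][N-D]}\sum_{\gamma \text{ covers }\alpha}\gamma^{\vee}$, where each $\gamma$ has $\dim\gamma = D-i+1$, codimension $i-1$. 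Rearranging, $\sum_{\gamma\text{ covers }\alpha}\gamma^{\vee} = \text{const}\cdot(A^*\alpha^{\vee} - \theta^*_{D-i}\alpha^{\vee})$.

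That relation lets me express sums of the $\gamma^{\vee}$ (codimension $i-1$) in terms of $\alpha^{\vee}$ (codimension $i$) acted on by $A^*$, which is the reverse of what I want for a downward-on-codimension statement. So the honest approach is the direct one: write $\alpha^{\vee} = \sum_{\alpha \subseteq \beta \subseteq x}\beta^{\mathcal{N}}$ and invoke Lemma \ref{alphaN,alphaV} to place each $\beta^{\mathcal{N}}$ in $E^*_{D-\dim\beta}V$, then argue that each $E^*_j V$ with $j \leq i = D-\dim\alpha$ lies in $E_0 V + \cdots + E_i V$. This last containment is exactly the statement that the subspaces $E^*_0 V, \dots, E^*_j V$ together span something inside $E_0 V + \cdots + E_j V$ — which is precisely $\mathcal{N}_j \supseteq$ ... no. The genuinely correct tool is Definition \ref{Ni}: $\mathcal{N}_j = (E^*_0 V + \cdots + E^*_j V) \cap (E_0 V + \cdots + E_{D-j}V)$, which does not immediately give $E^*_j V \subseteq E_0 V + \cdots + E_j V$. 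I expect the main obstacle to be identifying the right intermediate claim; most likely the intended argument is: show $\alpha^{\mathcal{N}} = E^*_{D-\dim\alpha}\alpha^{\vee}$ lies in $E_0 V + \cdots + E_{\dim\alpha}V$ because it sits in an irreducible $T$-module of $\mathcal{N}$ with endpoint $\dim\alpha$ (hence dual endpoint $\dim\alpha$, by \eqref{r=t}), so its $E$-support starts at $E_{\dim\alpha}$, not ends there — again the wrong direction, suggesting I should prove the *companion* fact first. Given the placement of this lemma at the start of Section \ref{somebases}, the cleanest proof is almost certainly: $\alpha^{\vee} = \alpha^{\vee}$'s projection onto $\mathcal{N}$ equals $\alpha^{\vee}$ itself (to be shown, or assumed from the section's goal), and then apply Proposition \ref{dimension} which gives $\dim(E_0\mathcal{N} + \cdots + E_i\mathcal{N}) = \sum_{r=0}^{i}\mult_r = \binom{D}{i}_q$ equal to the number of $\alpha$ with $\dim\alpha \geq D-i$... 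I would reconcile this count with the number of $\beta^{\vee}$'s involved and conclude by a dimension-count plus linear independence (Lemma \ref{independent}(ii)). The hard part will be making this dimension-matching rigorous without circularity, since the section's ultimate goal is to prove $\{\alpha^{\vee}\}$ is a basis for $\mathcal{N}$.
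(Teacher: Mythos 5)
There is a genuine gap: you never arrive at a working argument, and the one tool that does the job is the one you explicitly discard. The paper's proof uses exactly Theorem \ref{Aaction}: since $(A-\theta_{\dim\gamma}I)\gamma^{\vee}$ lies in $\operatorname{Span}\{\beta^{\vee}\mid \gamma \text{ covers }\beta\}$, each application of $A-\theta_j I$ pushes $\gamma^{\vee}$ down to subspaces of one lower dimension, and the base case is $0^{\vee}=$ the all-ones vector, which satisfies $(A-\theta_0 I)0^{\vee}=0$. Hence $(A-\theta_0I)(A-\theta_1I)\cdots(A-\theta_{i}I)\alpha^{\vee}=0$ with $i=\dim\alpha$, and by (\ref{ea}) this forces $E_r\alpha^{\vee}=0$ for $r>i$, which is the claim. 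You looked at Theorem \ref{Aaction}, observed that the $\beta$'s produced have smaller dimension, and concluded ``that goes the wrong way'' --- but it goes the wrong way only because you set up your induction on codimension upward from $\alpha=x$; had you inducted on $\dim\alpha$ upward from $\dim\alpha=0$, the covering relation points exactly where you need it. (Your base case is also confused: for $\alpha=x$ the target space is $E_0V+\cdots+E_DV=V$, so nothing like $E_0^*V=E_0V$ is needed or true.)

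None of your fallback routes closes the gap. The ``direct'' route would require $E_j^*V\subseteq E_0V+\cdots+E_jV$, which is false in general. The route through irreducible submodules of $\mathcal{N}$ presupposes $\alpha^{\vee}\in\mathcal{N}$, which is Corollary \ref{acheck} --- a consequence of this lemma, not an input to it. The final dimension-count sketch is circular for the reason you yourself flag (the section's goal is to prove $\{\alpha^{\vee}\}$ is a basis for $\mathcal{N}$), and the count itself is off: Proposition \ref{dimension} gives $\dim E_i\mathcal{N}=\sum_{r=0}^{i}\mult_r$, not $\dim(E_0\mathcal{N}+\cdots+E_i\mathcal{N})$, and the number of $\alpha\subseteq x$ with $\dim\alpha\geq D-i$ is $\sum_{j=D-i}^{D}\binom{D}{j}_q$, not $\binom{D}{i}_q$. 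In short, the missing idea is that the statement is equivalent to $\alpha^{\vee}$ being annihilated by $\prod_{j=0}^{\dim\alpha}(A-\theta_jI)$, and Theorem \ref{Aaction} delivers that by induction on $\dim\alpha$.
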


\begin{proof}
    If $\alpha=x$, then (\ref{e0}) clearly holds. Now we assume that $\alpha\subsetneq x$. Write $i=\dim \alpha$. Note that $i<D$. By Theorem \ref{Aaction}, for $\gamma\subseteq x$ we have
    \begin{equation}
    \label{a-theta}
        (A-\theta_{\dim \gamma}I)\gamma^{\vee}\in \text{Span}\{\beta^{\vee}\mid \text{$\gamma$ covers $\beta$}\}.
    \end{equation}

    By (\ref{a-theta}) and induction on $i$, we obtain
    \begin{equation*}
    \label{aproduct}
        (A-\theta_0I)(A-\theta_1I)\cdots (A-\theta_{i}I)\alpha^{\vee}=0.
    \end{equation*}
    By (\ref{ea}), we have
    \begin{equation*}
        E_r\alpha^{\vee}=0\qquad \qquad (i<r\leq D).
    \end{equation*}
    The result follows.
\end{proof}

\begin{lemma}
    For $\alpha\subseteq x$, 
    \begin{equation}
    \label{e*0}
        \alpha^{\vee}\in E^*_0V+E^*_1V+\cdots + E^*_{D-\dim \alpha}V.
    \end{equation}
\end{lemma}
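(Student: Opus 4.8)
The plan is to unwind the definitions of $\alpha^{\vee}$ and of the spaces $E^*_jV$, reducing the claim to a purely combinatorial statement about distances in $\Gamma$.

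First I would recall that $E^*_jV$ is spanned by the vectors $\widehat{y}$ with $\partial(x,y)=j$, so that $E^*_0V+E^*_1V+\cdots+E^*_mV$ is spanned by $\{\widehat{y}\mid \partial(x,y)\le m\}$. Since $\alpha^{\vee}=\sum_{y\in H_\alpha}\widehat{y}$ with $H_\alpha=\{y\in X\mid \alpha\subseteq y\}$, it therefore suffices to show that every $y\in H_\alpha$ satisfies $\partial(x,y)\le D-\dim\alpha$.

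Next I would recall the distance formula for the Grassmann graph: for $y\in X$ one has $\partial(x,y)=D-\dim(x\cap y)$, equivalently $\Gamma_j(x)=P_{D-j,j}$, which is already noted in the proof of Lemma~\ref{y cap z}. Now if $y\in H_\alpha$, then $\alpha\subseteq x$ and $\alpha\subseteq y$ give $\alpha\subseteq x\cap y$, hence $\dim(x\cap y)\ge\dim\alpha$, and therefore $\partial(x,y)=D-\dim(x\cap y)\le D-\dim\alpha$. Thus $\widehat{y}\in E^*_{\partial(x,y)}V\subseteq E^*_0V+\cdots+E^*_{D-\dim\alpha}V$ for each $y\in H_\alpha$, and summing over $y\in H_\alpha$ yields \eqref{e*0}.

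There is no real obstacle here; the only thing to be careful about is citing the distance formula $\partial(x,y)=D-\dim(x\cap y)$ (or the identity $\Gamma_j(x)=P_{D-j,j}$) rather than leaving it implicit. One could also phrase the argument without invoking the distance formula at all, simply noting $H_\alpha\subseteq\bigcup_{j=0}^{D-\dim\alpha}\Gamma_j(x)$, which follows from the same containment $\alpha\subseteq x\cap y$.
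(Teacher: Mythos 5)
Your proof is correct and is essentially the argument the paper intends: the paper's entire proof is ``By the definition of $\alpha^{\vee}$,'' and your write-up simply makes explicit the underlying facts (the distance formula $\partial(x,y)=D-\dim(x\cap y)$, i.e.\ $\Gamma_j(x)=P_{D-j,j}$, together with $\alpha\subseteq x\cap y$ for $y\in H_{\alpha}$) that the authors leave implicit. No gap; your version is just a more careful unpacking of the same observation.
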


\begin{proof}
    By the definition of $\alpha^{\vee}$.
\end{proof}

\begin{lemma}
\label{ND-i}
    For $\alpha\subseteq x$, 
    \begin{equation*}
        \alpha^{\vee}\in \mathcal{N}_{D-\dim \alpha}.
    \end{equation*}
\end{lemma}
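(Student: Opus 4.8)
The plan is to recognize $\mathcal{N}_{D-\dim\alpha}$ via its defining intersection (Definition~\ref{Ni}) and show $\alpha^{\vee}$ lies in both of the subspaces being intersected. Write $i=\dim\alpha$, so $D-i=D-\dim\alpha$. The subspace $\mathcal{N}_{D-i}$ is
\begin{equation*}
    (E^*_0V+\cdots+E^*_{D-i}V)\cap(E_0V+E_1V+\cdots+E_iV),
\end{equation*}
since $D-(D-i)=i$. The first containment, $\alpha^{\vee}\in E^*_0V+\cdots+E^*_{D-i}V$, is exactly \eqref{e*0}. The second containment, $\alpha^{\vee}\in E_0V+\cdots+E_iV$, is exactly \eqref{e0}. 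Intersecting the two gives $\alpha^{\vee}\in\mathcal{N}_{D-i}$.

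The whole argument is therefore just a bookkeeping check: one must verify that the index $D-i$ appearing in \eqref{e*0} and the index $i$ appearing in \eqref{e0} fit together correctly with the shifted indexing in Definition~\ref{Ni}, namely that $\mathcal{N}_j$ intersects the first $j+1$ dual-primitive-idempotent spaces with the first $D-j+1$ primitive-idempotent spaces, and that setting $j=D-i$ produces precisely the two ranges already established. There is no real obstacle; the only thing to be careful about is the off-by-one matching of $D-j$ with $i$ when $j=D-i$, which is immediate. I would write the proof in two short sentences: first quote \eqref{e0} and \eqref{e*0}, then invoke Definition~\ref{Ni} with the substitution $j=D-\dim\alpha$.

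\begin{proof}
    Write $i=\dim\alpha$. By \eqref{e*0} we have $\alpha^{\vee}\in E^*_0V+E^*_1V+\cdots+E^*_{D-i}V$, and by \eqref{e0} we have $\alpha^{\vee}\in E_0V+E_1V+\cdots+E_iV$. Setting $j=D-i$ in Definition \ref{Ni}, the subspace $\mathcal{N}_{D-i}$ is the intersection of $E^*_0V+\cdots+E^*_{D-i}V$ with $E_0V+\cdots+E_iV$. The result follows.
\end{proof}
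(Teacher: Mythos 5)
Your proof is correct and takes essentially the same route as the paper: combine \eqref{e0} and \eqref{e*0} and identify the resulting intersection with $\mathcal{N}_{D-\dim\alpha}$ via Definition~\ref{Ni}. The index bookkeeping you carry out is exactly right, and your version is if anything slightly cleaner, since the paper's additional citation of Proposition~\ref{dimension} is not actually needed for this containment.
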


\begin{proof}
    Combine (\ref{e0}) and (\ref{e*0}) to get
    $$
    \alpha^{\vee}\in (E^*_0V+E^*_1V+\cdots + E^*_{D-\dim \alpha}V) \cap (E_0V+E_1V+\cdots + E_{\dim \alpha}V).
    $$
    The result follows from Definition \ref{Ni} and Proposition \ref{dimension}.
\end{proof}

\begin{corollary}
\label{acheck}
    For $\alpha\subseteq x$, 
    \begin{equation*}
        \alpha^{\vee}\in \mathcal{N}.
    \end{equation*}
\end{corollary}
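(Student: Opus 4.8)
The goal is to prove that $\alpha^{\vee}\in\mathcal{N}$ for every subspace $\alpha\subseteq x$. The plan is to leverage Lemma \ref{ND-i}, which has just been established: it says that $\alpha^{\vee}$ lies in the subspace $\mathcal{N}_{D-\dim\alpha}$. Since $0\leq D-\dim\alpha\leq D$, this places $\alpha^{\vee}$ inside one of the summands $\mathcal{N}_0,\mathcal{N}_1,\dots,\mathcal{N}_D$.

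The next step is simply to recall the description of the nucleus $\mathcal{N}$ from Lemma \ref{defN}, namely that $\mathcal{N}=\mathcal{N}_0+\mathcal{N}_1+\cdots+\mathcal{N}_D$. Combining this with Lemma \ref{ND-i} immediately gives $\alpha^{\vee}\in\mathcal{N}_{D-\dim\alpha}\subseteq\mathcal{N}$, which is the desired conclusion.

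I do not anticipate any obstacle here: this corollary is a one-line consequence of the preceding lemma together with the definition of $\mathcal{N}$ as the sum of the $\mathcal{N}_i$. The only care needed is to confirm that the index $D-\dim\alpha$ indeed falls in the range $\{0,1,\dots,D\}$, which holds because $0\leq\dim\alpha\leq D$ as $\alpha\subseteq x$ and $\dim x=D$. Thus the proof is essentially: "Immediate from Lemma \ref{ND-i} and Lemma \ref{defN}."
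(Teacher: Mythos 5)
Your proof is correct and is exactly the paper's argument: the paper also deduces the corollary immediately from Lemma \ref{ND-i} together with Lemma \ref{defN}. Nothing further is needed.
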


\begin{proof}
    Immediate from Lemmas \ref{defN} and \ref{ND-i}.
\end{proof}

\begin{lemma}
\label{alphaN}
    For $\alpha\subseteq x$, 
    \begin{equation*}
        \alpha^{\mathcal{N}}\in E^*_{D-\dim \alpha}\mathcal{N}.
    \end{equation*}
\end{lemma}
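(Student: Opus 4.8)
The plan is to combine two facts already established: first, that $\alpha^{\mathcal{N}} = E^*_{D-\dim\alpha}\alpha^{\vee}$ by Lemma~\ref{alphaN,alphaV}; and second, that $\alpha^{\vee} \in \mathcal{N}$ by Corollary~\ref{acheck}. Since $\mathcal{N}$ is a $T$-module and $E^*_{D-\dim\alpha} \in M^* \subseteq T$, applying $E^*_{D-\dim\alpha}$ to $\alpha^{\vee} \in \mathcal{N}$ keeps us inside $\mathcal{N}$, so $\alpha^{\mathcal{N}} \in \mathcal{N}$. To land in the specific summand $E^*_{D-\dim\alpha}\mathcal{N}$ rather than just $\mathcal{N}$, I would note that $E^*_{D-\dim\alpha}\mathcal{N} = E^*_{D-\dim\alpha}(E^*_{D-\dim\alpha}\mathcal{N}) \subseteq E^*_{D-\dim\alpha}\mathcal{N}$ trivially, and more to the point that $\alpha^{\mathcal{N}}$ lies in the image of $E^*_{D-\dim\alpha}$ restricted to $\mathcal{N}$.

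Concretely, the argument runs: write $i = \dim\alpha$. By Lemma~\ref{alphaN,alphaV}, $\alpha^{\mathcal{N}} = E^*_{D-i}\alpha^{\vee}$. By Corollary~\ref{acheck}, $\alpha^{\vee} \in \mathcal{N}$. Therefore $\alpha^{\mathcal{N}} = E^*_{D-i}\alpha^{\vee} \in E^*_{D-i}\mathcal{N}$, which is exactly the claim. This is essentially a one-line deduction once Lemma~\ref{alphaN,alphaV} and Corollary~\ref{acheck} are in hand.

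There is no real obstacle here; the lemma is a bookkeeping consequence of the two preceding results, and the only thing to be careful about is invoking the right earlier statements in the right order. One could alternatively observe directly from Definition~\ref{vectordef} that $\alpha^{\mathcal{N}}$ is the characteristic vector of $G_\alpha \subseteq \Gamma_{D-i}(x)$, hence lies in $E^*_{D-i}V$, and then combine this with Corollary~\ref{acheck} and the fact that $\mathcal{N} = \sum_{j} \mathcal{N}_j$ with $\mathcal{N}_j \subseteq E^*_j V \cap \cdots$; but routing through Lemma~\ref{alphaN,alphaV} is cleaner.

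\begin{proof}
    Write $i = \dim\alpha$. By Lemma~\ref{alphaN,alphaV}, we have $\alpha^{\mathcal{N}} = E^*_{D-i}\alpha^{\vee}$. By Corollary~\ref{acheck}, $\alpha^{\vee} \in \mathcal{N}$. Hence $\alpha^{\mathcal{N}} = E^*_{D-i}\alpha^{\vee} \in E^*_{D-i}\mathcal{N}$.
\end{proof}
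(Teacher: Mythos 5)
Your proof is correct and matches the paper exactly: the paper also deduces the lemma immediately from Lemma~\ref{alphaN,alphaV} and Corollary~\ref{acheck}. Nothing further is needed.
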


\begin{proof}
    Immediate from Lemma \ref{alphaN,alphaV} and Corollary \ref{acheck}.
\end{proof}

\begin{corollary}
    For $\alpha\subseteq x$, 
    \begin{equation*}
        \alpha^{\mathcal{N}}\in \mathcal{N}.
    \end{equation*}
\end{corollary}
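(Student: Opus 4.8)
The plan is to deduce the statement directly from Lemma~\ref{alphaN}, which already places $\alpha^{\mathcal{N}}$ in $E^*_{D-\dim\alpha}\mathcal{N}$. It then remains only to observe that $E^*_{D-\dim\alpha}\mathcal{N}\subseteq\mathcal{N}$. For this inclusion I would invoke the fact, noted just after Lemma~\ref{defN}, that $\mathcal{N}$ is a $T$-module; since $E^*_{D-\dim\alpha}$ lies in the dual Bose--Mesner algebra $M^*\subseteq T$, it stabilizes $\mathcal{N}$, and the inclusion follows. Combining this with Lemma~\ref{alphaN} gives $\alpha^{\mathcal{N}}\in\mathcal{N}$.

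There is no real obstacle here; the proof is a one-line consequence of results already in hand. The only point requiring a moment's care is ensuring the chain of prior results is genuinely available: Lemma~\ref{alphaN} has been established via Lemma~\ref{alphaN,alphaV} and Corollary~\ref{acheck}, and the $T$-module property of $\mathcal{N}$ comes from Section~3.

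As an alternative route that sidesteps the $T$-module structure entirely, one may instead use Lemma~\ref{vee to N}: equation~(\ref{Ntovee}) expresses $\alpha^{\mathcal{N}}$ as a $\mathbb{C}$-linear combination of the vectors $\beta^{\vee}$ with $\alpha\subseteq\beta\subseteq x$, and each such $\beta^{\vee}$ lies in $\mathcal{N}$ by Corollary~\ref{acheck}; since $\mathcal{N}$ is a subspace of $V$, the linear combination lies in $\mathcal{N}$ as well. Either argument suffices, and I would present whichever keeps the exposition tightest at this point in the paper.
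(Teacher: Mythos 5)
Your main argument is exactly the paper's: the corollary is deduced immediately from Lemma \ref{alphaN}, with your added (correct) justification that $E^*_{D-\dim\alpha}\mathcal{N}\subseteq\mathcal{N}$ because $\mathcal{N}$ is a $T$-module and $E^*_{D-\dim\alpha}\in M^*\subseteq T$. The alternative route via \eqref{Ntovee} and Corollary \ref{acheck} is also valid, but the first suffices and matches the paper.
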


\begin{proof}
    Immediate from Lemma \ref{alphaN}.
\end{proof}

We now display two bases for $\mathcal{N}$.

\begin{theorem}
    Each of (i),(ii) below forms a basis for $\mathcal{N}$:
    \begin{enumerate}[label=(\roman*)]
        \item $\{\alpha^{\vee}\mid \alpha\subseteq x\}$;
        \item $\bigl\{\alpha^{\mathcal{N}}\mid \alpha\subseteq x\bigr\}$.
    \end{enumerate}
\end{theorem}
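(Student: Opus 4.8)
The plan is to show that each of the two sets is a linearly independent subset of $\mathcal{N}$ whose cardinality equals $\dim\mathcal{N}$, and hence a basis. Linear independence is already in hand: Lemma~\ref{independent} gives that both $\{\alpha^{\mathcal{N}}\mid\alpha\subseteq x\}$ and $\{\alpha^{\vee}\mid\alpha\subseteq x\}$ are linearly independent. Membership in $\mathcal{N}$ is also in hand: Corollary~\ref{acheck} gives $\alpha^{\vee}\in\mathcal{N}$ for all $\alpha\subseteq x$, and the corollary following Lemma~\ref{alphaN} gives $\alpha^{\mathcal{N}}\in\mathcal{N}$ for all $\alpha\subseteq x$. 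So the only remaining point is a cardinality count.

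The key step is therefore to count the number of subspaces $\alpha\subseteq x$. Since $\dim x=D$, the number of subspaces of $x$ of dimension $D-i$ (equivalently of dimension $i$) is $\binom{D}{i}_q$ by Lemma~\ref{qbinom}, and summing over $0\leq i\leq D$ gives
\begin{equation*}
    \bigl|\{\alpha\mid\alpha\subseteq x\}\bigr|=\sum_{i=0}^{D}\binom{D}{i}_q.
\end{equation*}
By Theorem~\ref{Ndimension}, this is exactly $\dim\mathcal{N}$. Hence each of the two sets is a linearly independent subset of $\mathcal{N}$ of size $\dim\mathcal{N}$, so each is a basis. (As a consistency check one can note that Lemma~\ref{vee to N} exhibits each set as an explicit invertible linear combination of the other, so once one set is shown to be a basis the other follows immediately; but the direct count makes both cases uniform.)

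I do not expect a genuine obstacle here — every ingredient has been assembled in the preceding sections, and the proof is essentially a three-line assembly: linear independence (Lemma~\ref{independent}), containment in $\mathcal{N}$ (Corollary~\ref{acheck} and its companion corollary), and the cardinality match via Lemma~\ref{qbinom} and Theorem~\ref{Ndimension}. If anything requires a word of care, it is simply making explicit that the indexing set $\{\alpha\mid\alpha\subseteq x\}$ ranges over all dimensions $0\leq\dim\alpha\leq D$ so that the sum $\sum_{i=0}^{D}\binom{D}{i}_q$ is the correct total; this is immediate from Lemma~\ref{qbinom}.

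\begin{proof}
    For $0\leq i\leq D$, the number of subspaces of $x$ of dimension $i$ is $\binom{D}{i}_q$ by Lemma~\ref{qbinom}, since $\dim x=D$. Summing over $i$,
    \begin{equation*}
        \bigl|\{\alpha\mid\alpha\subseteq x\}\bigr|=\sum_{i=0}^{D}\binom{D}{i}_q,
    \end{equation*}
    which equals $\dim\mathcal{N}$ by Theorem~\ref{Ndimension}.

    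(i) By Corollary~\ref{acheck}, $\alpha^{\vee}\in\mathcal{N}$ for all $\alpha\subseteq x$, so $\{\alpha^{\vee}\mid\alpha\subseteq x\}$ is a subset of $\mathcal{N}$. By Lemma~\ref{independent}(ii), this set is linearly independent. Since its cardinality equals $\dim\mathcal{N}$, it is a basis for $\mathcal{N}$.

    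(ii) By the corollary following Lemma~\ref{alphaN}, $\alpha^{\mathcal{N}}\in\mathcal{N}$ for all $\alpha\subseteq x$, so $\{\alpha^{\mathcal{N}}\mid\alpha\subseteq x\}$ is a subset of $\mathcal{N}$. By Lemma~\ref{independent}(i), this set is linearly independent. Since its cardinality equals $\dim\mathcal{N}$, it is a basis for $\mathcal{N}$.
\end{proof}
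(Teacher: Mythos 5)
Your proof is correct and follows essentially the same route as the paper: containment in $\mathcal{N}$, linear independence from Lemma~\ref{independent}, and the cardinality count $\sum_{i=0}^{D}\binom{D}{i}_q=\dim\mathcal{N}$ via Lemma~\ref{qbinom} and Theorem~\ref{Ndimension}. The only (immaterial) difference is in part (ii), where the paper deduces spanning from part (i) together with \eqref{veetoN} rather than repeating the dimension count — exactly the alternative you note in your parenthetical.
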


\begin{proof}
    (i) By Corollary \ref{acheck}, the set $\{\alpha^{\vee}\mid \alpha\subseteq x\}$ is contained in $\mathcal{N}$. By Lemma \ref{independent}(ii), these vectors are linearly independent. The number of vectors in the set is equal to the total number of subspaces of $x$; by Lemma \ref{qbinom}, this number is  \begin{equation*}
        \sum_{i=0}^{D}\binom{D}{i}_q.
    \end{equation*}
    The result follows from Theorem \ref{Ndimension}.

    (ii) By (i) and \eqref{veetoN}, the vectors $\{\alpha^{\mathcal{N}} \mid \alpha \subseteq x\}$ span $\mathcal{N}$. The result follows from Lemma \ref{independent}(i).
\end{proof}
\begin{remark}
    The transition matrices between the bases
    \begin{equation*}
        \{\alpha^{\vee}\mid \alpha\subseteq x\},\qquad \qquad \bigl\{\alpha^{\mathcal{N}}\mid \alpha\subseteq x\bigr\}
    \end{equation*}
    are given in (\ref{veetoN}), (\ref{Ntovee}).
\end{remark}

\section{Case $N=2D$}
\label{remark}
Recall the Grassmann graph $J_q(N,D)$.
In this section we make a remark on the case $N=2D$ and present some open problems.

Assume that $N=2D$. 
Pick $x \in X$ and write $T=T(x)$ and $\mathcal{H}=\mathcal{H}(x)$.
Let $\Omega$ denote an irreducible $\mathcal{H}$-module of type $(0,\beta,0)$, where $1\leq \beta\leq \frac{D}{2}$. 
Observe that $\Omega$ is not alpha-dominant, yet $\widetilde{\Omega}$ is an irreducible $T$-submodule of the nucleus $\mathcal{N}$. Hence Lemmas \ref{beta-alpha}--\ref{HW} do not hold in the case of $\Omega$. 
This observation suggests that a different method is required to determine the values of $\text{mult}_r$ ($0\leq r\leq \frac{D}{2}$) as defined in Definition \ref{multrdef}.

\begin{problem}
(i) For $0\leq r\leq \frac{D}{2}$, find the value of $\text{mult}_r$ when $N=2D$, and (ii) find the dimension of $\mathcal{N}$.
\end{problem}

\begin{problem}
    Describe some bases for the nucleus $\mathcal{N}$ when $N=2D$.
\end{problem}

\section{Appendix}
In this section we present some identities that involve $q$-binomial coefficients.

\begin{lemma}
    For nonnegative integers $\ell\geq j$ such that $\ell$ and $j$ are not both zero, we have
    \begin{equation}
    \label{identity}
        \binom{\ell}{j}_q=q^j\binom{\ell-1}{j}_q+\binom{\ell-1}{j-1}_q.
    \end{equation}
\end{lemma}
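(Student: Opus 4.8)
The statement to prove is the $q$-analogue of Pascal's rule. Since this is a purely algebraic identity, the plan is a direct computation from the definition \eqref{qbinomdef} together with the notation $[\ell]=(q^{\ell}-1)/(q-1)$.

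First I would dispose of the boundary case $j=0$: here $\ell\geq 1$ (as $\ell,j$ are not both zero), the left-hand side equals $1$, and on the right $q^{0}\binom{\ell-1}{0}_q=1$ while $\binom{\ell-1}{-1}_q=0$ by the convention set just after \eqref{qbinomdef}, so the identity holds. For the main case $1\leq j\leq \ell$, I would pull out the common factor
\begin{equation*}
 P=\frac{[\ell-1][\ell-2]\cdots[\ell-j+1]}{[j][j-1]\cdots[1]},
\end{equation*}
which is legitimate since $\binom{\ell-1}{j}_q=P\,[\ell-j]$ and $\binom{\ell-1}{j-1}_q=P\,[j]$ (this remains valid when $j=\ell$, where $[\ell-j]=[0]=0$ and $\binom{\ell-1}{\ell}_q=0$). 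Thus
\begin{equation*}
 q^{j}\binom{\ell-1}{j}_q+\binom{\ell-1}{j-1}_q=P\bigl(q^{j}[\ell-j]+[j]\bigr),
\end{equation*}
and it remains to check the scalar identity $q^{j}[\ell-j]+[j]=[\ell]$.

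That last step is a one-line computation:
\begin{equation*}
 q^{j}[\ell-j]+[j]=\frac{q^{j}(q^{\ell-j}-1)+(q^{j}-1)}{q-1}=\frac{q^{\ell}-1}{q-1}=[\ell].
\end{equation*}
Substituting back gives $P\,[\ell]=\binom{\ell}{j}_q$, as desired. There is no serious obstacle here; the only point requiring care is bookkeeping of the edge cases and the out-of-range conventions $\binom{\ell}{j}_q=0$, so I would be explicit that the factorization $\binom{\ell-1}{j}_q=P[\ell-j]$ is used precisely in the range where it is meaningful. (Alternatively, one could give the standard combinatorial proof—partitioning the $j$-dimensional subspaces of an $\ell$-dimensional space according to whether they lie in a fixed hyperplane—but the algebraic argument above is shorter and self-contained within the paper's notation.)
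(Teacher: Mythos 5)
Your argument is correct and is exactly the computation the paper has in mind: its proof of this lemma is simply ``Routine from (\ref{qbinomdef})'', and you have carried out that routine verification from the definition, including the key scalar identity $q^{j}[\ell-j]+[j]=[\ell]$ and the boundary cases $j=0$ and $j=\ell$. No issues.
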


\begin{proof}
    Routine from (\ref{qbinomdef}).
\end{proof}

\begin{lemma}
    \label{identity1}
    For a nonnegative integer $\ell$, we have
    \begin{equation}
    \label{=0}
        \sum_{j=0}^{\ell}(-1)^jq^{\binom{j}{2}}\binom{\ell}{j}_q=\begin{cases}
            1&\text{if $\ell=0$,}\\
            0&\text{if $\ell>0$.}
            \end{cases}
    \end{equation}
\end{lemma}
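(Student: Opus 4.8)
The plan is to prove the $q$-binomial identity $\sum_{j=0}^{\ell}(-1)^j q^{\binom{j}{2}}\binom{\ell}{j}_q = \delta_{\ell,0}$ by induction on $\ell$, using the $q$-Pascal recurrence \eqref{identity} established in the previous lemma. The base case $\ell = 0$ is immediate, since the sum consists of the single term $j=0$, which equals $1$. For the inductive step, assume the identity holds for $\ell - 1$ where $\ell \geq 1$, and consider the sum $S_\ell = \sum_{j=0}^{\ell}(-1)^j q^{\binom{j}{2}}\binom{\ell}{j}_q$.

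The key step is to substitute the recurrence $\binom{\ell}{j}_q = q^j\binom{\ell-1}{j}_q + \binom{\ell-1}{j-1}_q$ into $S_\ell$ and split the sum into two pieces. The first piece is $\sum_{j=0}^{\ell}(-1)^j q^{\binom{j}{2}+j}\binom{\ell-1}{j}_q$, and since $\binom{j}{2}+j = \binom{j+1}{2}$, after noting that $\binom{\ell-1}{\ell}_q = 0$ this becomes $\sum_{j=0}^{\ell-1}(-1)^j q^{\binom{j+1}{2}}\binom{\ell-1}{j}_q$. The second piece is $\sum_{j=1}^{\ell}(-1)^j q^{\binom{j}{2}}\binom{\ell-1}{j-1}_q$; reindexing with $j \mapsto j+1$ gives $-\sum_{j=0}^{\ell-1}(-1)^j q^{\binom{j+1}{2}}\binom{\ell-1}{j}_q$. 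These two pieces cancel exactly, so $S_\ell = 0$, completing the induction.

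I expect no real obstacle here: the whole argument is a routine telescoping via $q$-Pascal, and the only point requiring minor care is the bookkeeping of the exponent identity $\binom{j}{2}+j = \binom{j+1}{2}$ and the handling of boundary terms (the $j = \ell$ term in the first sum vanishes because $\binom{\ell-1}{\ell}_q = 0$, and the reindexing of the second sum shifts the range cleanly). An alternative, essentially equivalent, route is to invoke the $q$-binomial theorem in the form $\prod_{i=0}^{\ell-1}(1 - q^i t) = \sum_{j=0}^{\ell}(-1)^j q^{\binom{j}{2}}\binom{\ell}{j}_q t^j$ and set $t = 1$, so that the product contains the factor $(1 - q^0) = 0$ whenever $\ell \geq 1$; but since the paper has already set up the $q$-Pascal recurrence as the preceding lemma, the inductive proof is the natural choice and keeps the appendix self-contained.
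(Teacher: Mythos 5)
Your proof is correct and follows essentially the same route as the paper: substitute the $q$-Pascal recurrence \eqref{identity}, split the sum, reindex, and observe that the two pieces cancel exactly. One minor note: since the two pieces cancel outright, the inductive hypothesis is never actually used, so the induction framing is superfluous --- the paper presents the same computation as a direct cancellation for any $\ell>0$.
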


\begin{proof} If $\ell=0$, then it is clear that (\ref{=0}) holds. Now assume that $\ell>0$. Then by (\ref{identity}),
    \begin{align*}
        \sum_{j=0}^{\ell}(-1)^jq^{\binom{j}{2}}\binom{\ell}{j}_q&=\sum_{j=0}^{\ell}(-1)^jq^{\binom{j}{2}}\Biggl(q^j\binom{\ell-1}{j}_q+\binom{\ell-1}{j-1}_q\Biggr)\\
        &=\sum_{j=0}^{\ell}(-1)^{j}q^{\binom{j+1}{2}}\binom{\ell-1}{j}_q+\sum_{j=0}^{\ell}(-1)^{j}q^{\binom{j}{2}}\binom{\ell-1}{j-1}_q\\
        &=\sum_{j=0}^{\ell-1}(-1)^{j}q^{\binom{j+1}{2}}\binom{\ell-1}{j}_q+\sum_{j=1}^{\ell}(-1)^{j}q^{\binom{j}{2}}\binom{\ell-1}{j-1}_q\\
        &=\sum_{j=0}^{\ell-1}(-1)^{j}q^{\binom{j+1}{2}}\binom{\ell-1}{j}_q+\sum_{j=0}^{\ell-1}(-1)^{j+1}q^{\binom{j+1}{2}}\binom{\ell-1}{j}_q\\
        &=0.
    \end{align*}
\end{proof}

\begin{lemma}
\label{identity2}
    For a positive integer $\ell\geq 2$, we have
    \begin{equation*}
        \sum_{j=0}^{\ell}(-1)^{j}q^{\binom{j}{2}-j}\binom{\ell}{j}_{q}=0.
    \end{equation*}
\end{lemma}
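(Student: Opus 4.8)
The plan is to prove Lemma~\ref{identity2} by the same telescoping technique used for Lemma~\ref{identity1}. First I would apply the Pascal-type recurrence (\ref{identity}) to split $\binom{\ell}{j}_q = q^j\binom{\ell-1}{j}_q + \binom{\ell-1}{j-1}_q$, so that the sum $\sum_{j=0}^{\ell}(-1)^j q^{\binom{j}{2}-j}\binom{\ell}{j}_q$ breaks into two pieces. In the first piece the factor $q^j$ combines with $q^{\binom{j}{2}-j}$ to give $q^{\binom{j}{2}}$, and in the second piece a shift of the summation index $j \mapsto j+1$ converts $q^{\binom{j+1}{2}-(j+1)} = q^{\binom{j}{2}}$ (since $\binom{j+1}{2}-(j+1) = \binom{j}{2}$), with an extra sign. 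The hope is that the two resulting sums cancel term-by-term, exactly as in the proof of Lemma~\ref{identity1}.

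After carrying out this manipulation, the first piece is $\sum_{j=0}^{\ell-1}(-1)^j q^{\binom{j}{2}}\binom{\ell-1}{j}_q$ and the second piece, after the index shift and using $\binom{j+1}{2}-(j+1)=\binom{j}{2}$, is $\sum_{j=0}^{\ell-1}(-1)^{j+1} q^{\binom{j}{2}}\binom{\ell-1}{j}_q$. These are negatives of each other, so the whole sum is $0$. Notice this argument only needs $\ell \geq 1$ for the recurrence to apply, but the hypothesis $\ell \geq 2$ guarantees the intermediate sums are over a nonempty range and avoids the degenerate behavior at $\ell=1$ where $\binom{0}{2}-0 = 0$ but the telescoping still works; in any case the stated hypothesis $\ell \geq 2$ is more than enough.

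The key steps in order: (1)~invoke (\ref{identity}) to rewrite $\binom{\ell}{j}_q$; (2)~distribute and absorb the $q^j$ factor in the first sum to get exponent $\binom{j}{2}$; (3)~in the second sum reindex $j \mapsto j+1$ and simplify the exponent using $\binom{j+1}{2}-(j+1) = \binom{j}{2}$; (4)~observe the two sums have opposite signs and hence cancel. I expect no real obstacle here — the only point requiring a moment of care is the exponent arithmetic $\binom{j+1}{2} - (j+1) = \frac{(j+1)j}{2} - (j+1) = \frac{(j+1)(j-2)}{2}$, so one should double-check that this equals $\binom{j}{2} = \frac{j(j-1)}{2}$; in fact $\frac{(j+1)(j-2)}{2} = \frac{j^2-j-2}{2}$ whereas $\frac{j(j-1)}{2} = \frac{j^2-j}{2}$, so these differ by $1$. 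This means the naive term-by-term cancellation is slightly off and the correct bookkeeping is: after reindexing, the exponent in the second sum is $\binom{j}{2}$ only after also accounting for the original $q^{-j}$ applied at $j+1$, i.e. $\binom{j+1}{2} - (j+1) = \binom{j}{2} - 1$. Thus the second sum carries an extra factor $q^{-1}$. The clean way around this is to first factor: write $q^{\binom{j}{2}-j} = q^{-1}\cdot q^{\binom{j}{2}-j+1}$ and note $\binom{j}{2}-j+1 = \binom{j-1}{2}$, so the sum equals $q^{-1}\sum_{j=0}^{\ell}(-1)^j q^{\binom{j-1}{2}}\binom{\ell}{j}_q$; a reindex $j \mapsto j+1$ then turns this into $-q^{-1}\sum_{j}(-1)^j q^{\binom{j}{2}}\binom{\ell}{j+1}_q$, and one can reduce to a telescoping identity of the Lemma~\ref{identity1} type. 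I would settle the exponent algebra carefully before committing to the final form, as that is the one genuinely error-prone spot.
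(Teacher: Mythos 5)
Your instinct to run the Lemma~\ref{identity1} telescoping argument is exactly the paper's approach, and you correctly catch the one subtlety: after reindexing, $\binom{j+1}{2}-(j+1)=\binom{j}{2}-1$, so the second sum picks up a factor $-q^{-1}$ rather than cancelling the first term-by-term. But you stop one step short of the conclusion and instead treat the stray $q^{-1}$ as an obstacle needing a ``workaround.'' It isn't: once you have
\begin{equation*}
\sum_{j=0}^{\ell}(-1)^{j}q^{\binom{j}{2}-j}\binom{\ell}{j}_{q}
=\sum_{j=0}^{\ell-1}(-1)^{j}q^{\binom{j}{2}}\binom{\ell-1}{j}_q
-q^{-1}\sum_{j=0}^{\ell-1}(-1)^{j}q^{\binom{j}{2}}\binom{\ell-1}{j}_q
=\bigl(1-q^{-1}\bigr)\sum_{j=0}^{\ell-1}(-1)^{j}q^{\binom{j}{2}}\binom{\ell-1}{j}_q,
\end{equation*}
you simply invoke Lemma~\ref{identity1} on the remaining sum, which vanishes because $\ell-1\geq 1$. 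That is the paper's proof. Your alternative route (factoring $q^{\binom{j}{2}-j}=q^{-1}q^{\binom{j-1}{2}}$ and reindexing to get $\binom{\ell}{j+1}_q$) is left unfinished and does not land on a sum of the Lemma~\ref{identity1} form, so as written the proposal does not complete the proof.

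A second, related error: you assert that the argument ``only needs $\ell\geq 1$'' and that $\ell\geq 2$ is ``more than enough.'' In fact the identity is false for $\ell=1$: the sum is $1-q^{-1}\neq 0$. The hypothesis $\ell\geq 2$ is precisely what makes the final step work, since it guarantees $\ell-1\geq 1$ so that Lemma~\ref{identity1} returns $0$ rather than $1$ for the residual sum. This is worth internalizing, because it confirms that the factor $(1-q^{-1})$ genuinely does not cancel on its own and the appeal to Lemma~\ref{identity1} is indispensable.
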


\begin{proof}
By (\ref{identity}), we have
    \begin{align*}
        \sum_{j=0}^{\ell}(-1)^{j}q^{\binom{j}{2}-j}\binom{\ell}{j}_{q}&=\sum_{j=0}^{\ell}(-1)^{j}q^{\binom{j}{2}-j}\Biggl(q^j \binom{\ell-1}{j}_q+\binom{\ell-1}{j-1}_q\Biggr)\\
        &=\sum_{j=0}^{\ell}(-1)^{j}q^{\binom{j}{2}}\binom{\ell-1}{j}_q+\sum_{j=0}^{\ell}(-1)^{j}q^{\binom{j}{2}-j}\binom{\ell-1}{j-1}_q\\
        &=\sum_{j=0}^{\ell-1}(-1)^{j}q^{\binom{j}{2}}\binom{\ell-1}{j}_q+\sum_{j=1}^{\ell}(-1)^{j}q^{\binom{j}{2}-j}\binom{\ell-1}{j-1}_q\\
        &=\sum_{j=0}^{\ell-1}(-1)^{j}q^{\binom{j}{2}}\binom{\ell-1}{j}_q+\sum_{j=0}^{\ell-1}(-1)^{j+1}q^{\binom{j+1}{2}-j-1}\binom{\ell-1}{j}_q\\
        &=\bigl(1-q^{-1}\bigr)\sum_{j=0}^{\ell-1}(-1)^{j}q^{\binom{j}{2}}\binom{\ell-1}{j}_q.
    \end{align*}
    The result follows from Lemma \ref{identity1}.
\end{proof}

\section*{Acknowledgements}
The authors would like to thank Professor Paul Terwilliger for suggesting this project and giving many valuable ideas throughout the preparation of this manuscript. 

J. Park was supported by the National Research Foundation of Korea (NRF) grant funded by the Korea government (MSIT) (RS-2024-00356153). The authors would like to thank the NRF and the MSIT for funding all the research travels that were relevant to this project.

\section*{Declarations}
\subsection*{Data Availability Statement}
No datasets were generated or analyzed during the current study.

\subsection*{Conflict of interest}
The author has no relevant financial or non-financial interests to disclose.

\newpage
\begin{multicols}{2} 
Jae-Ho Lee\\
Department of Mathematics and Statistics\\
University of North Florida\\
Jacksonville, FL 32224 USA\\
email: jaeho.lee@unf.edu
\columnbreak 

Jongyook Park\\
Department of Mathematics\\
Kyungpook National University\\
Daegu, 41566, Republic of Korea\\
email: jongyook@knu.ac.kr
\end{multicols}

Ian Seong\\
Department of Mathematics\\
Williams College \\
18 Hoxsey St \\
Williamstown, MA 01267-2680 USA \\
email: is11@williams.edu\\
\end{document}